\documentclass[twoside]{article}

%
\usepackage[accepted]{aistats2021}
%


\setlength{\pdfpageheight}{11in}
\setlength{\pdfpagewidth}{8.5in}

\usepackage[round]{natbib}

\usepackage[utf8]{inputenc} 
\usepackage[T1]{fontenc}    
\usepackage{hyperref}       
\usepackage{url}            
\usepackage{booktabs}       
\usepackage{amsfonts}       
\usepackage{nicefrac}       
\usepackage{microtype}      
\usepackage{color,soul}
\usepackage[dvipsnames]{xcolor}
\usepackage{hyperref}
\usepackage{url}
\usepackage{booktabs}
\usepackage{subfigure}
\usepackage{float}
\usepackage{indentfirst}
\usepackage{smile}
\usepackage{appendix}
\usepackage{algorithm}
\usepackage{algorithmic}
\usepackage{amssymb}
\usepackage{color}
\usepackage{enumitem}
\usepackage{diagbox}
\usepackage{multicol}
\usepackage{wrapfig}
\usepackage{chngcntr}
\usepackage{ulem}

\counterwithout{equation}{section}

\newenvironment{steps}
 {\begin{enumerate}[label=Step \Roman*,leftmargin=*,align=left]}
 {\end{enumerate}}

\newtheorem{prop}{Proposition}

\newtheorem{thm}{Theorem}

\newtheorem{defi}{Definition}

\newtheorem{lem}{Lemma}

\begin{document}

%

%
\runningauthor{Wei, Zhu, Zhang, Xie}

\twocolumn[

\aistatstitle{Goodness-of-Fit Test for Mismatched Self-Exciting Processes}


\aistatsauthor{ Song Wei \quad Shixiang Zhu \quad Minghe Zhang \quad Yao Xie}

\aistatsaddress{School of Industrial and Systems Engineering, \\ Georgia Institute of Technology, Atlanta, Georgia, USA} ]

\begin{abstract}

Recently there have been many research efforts in developing generative models for self-exciting point processes, partly due to their broad applicability for real-world applications. However, rarely can we quantify how well the generative model captures the nature or ground-truth since it is usually unknown. The challenge typically lies in the fact that the generative models typically provide, at most, good approximations to the ground-truth (e.g., through the rich representative power of neural networks), but they cannot be precisely the ground-truth. We thus cannot use the classic goodness-of-fit (GOF) test framework to evaluate their performance. In this paper, we develop a GOF test for generative models of self-exciting processes by making a new connection to this problem with the classical statistical theory of Quasi-maximum-likelihood estimator (QMLE). We present a non-parametric self-normalizing statistic for the GOF test: the Generalized Score (GS) statistics, and explicitly capture the model misspecification when establishing the asymptotic distribution of the GS statistic. Numerical simulation and real-data experiments validate our theory and demonstrate the proposed GS test's good performance.

\end{abstract}

\section{Introduction}

Self- and mutual- exciting point processes, as known as the Hawkes processes, are introduced by the original papers by \citet{hawkes1971point,hawkes1971spectra,hawkes1974cluster}. They become popular in machine learning due to their wide applicability in modeling triggering effect in discrete event data, which is ubiquitous in modern applications ranging from seismology \citep{ogata1988statistical,ogata1999seismicity,zhuang2011next}, infectious disease modeling \citep{meyer2014power,schoenberg2019recursive}, crime events \citep{doi:10.1198/jasa.2011.ap09546}, wildfire occurrence \citep{peng2005space}, civilian deaths in Iraq \citep{lewis2012self}, terrorist activity forecasting \citep{porter2012self}, social network analysis and so on.


Classical Hawkes processes are largely parametric, which focus on modeling the conditional intensity function of the point process (since the conditional intensity function completely specifies the distribution of the process). Hawkes process assumes that the intensity function consists of the sum of a deterministic background intensity (which can be time-varying) and a stochastic term, which captures the influence from the past events. It is common to assume that the influence from past events is additive, and the so-called {\it triggering function} measures an individual event's influence. One key problem in the Hawkes process is to specify the triggering kernel. Popular parametric triggering functions include exponential kernel, power kernel, and Matérn kernel \citep{reinhart2018review}. 

When facing more complex data with complex temporal triggering patterns, parametric models can become too restrictive and even mis-specified. Thus, recently, there have been many efforts in developing more general generative models for point processes, including probability weighted kernel estimation with adaptive bandwidth \citep{zhuang2002stochastic}, probability weighted histogram estimation \citep{marsan2008extending} and with inhomogeneous spatial background rate \citep{fox2016spatially} and neural Hawkes process \citep{mei2017neural}.

Since the specified models (including those generative models) are very likely to be incorrect due to the ignorance of the ground-truth, a natural and important question yet to be answered is which model to select in practice. Here, we proposed to use how well those models capture the data, i.e. {\it goodness-of-fit} of these Hawkes process models, as the metric to rank models in practice. As well-said in \citet{engle1984wald}:
{\it "At any stage in the specification search, it may be desirable to determine whether an adequate representation of the data has been achieved."} For generative models, since they tend to be further away from the probabilistic framework of Hawkes processes, it is more difficult to evaluate their GOF to the real data. For these generative models, the classic statistical GOF test framework
may not apply.

There are two major difficulties in utilizing existing GOF tests for the self-exciting point processes. (1) Generative models typically provide, at most, good approximations to the ground-truth (e.g., through the rich representative power of neural networks), but they cannot be precisely the ground-truth. 
For instance, it is unlikely that neural networks truly specify the data distribution; rather, the neural networks are being used because of their universal approximation power and can generate a good approximation to the ground-truth \citep{mei2017neural}.
Typically, it is impractical to access the GOF via testing with unknown ground-truth $g^*$, as illustrated in the left panel in Figure~\ref{fig:set_up_mismatch}. In both theory and practice, the best we can do is to test how close our fitted model $\hat g$ is to the approximation $g_0$, as illustrated in the middle panel in Figure~\ref{fig:set_up_mismatch}. 
Nevertheless, we still consider model misspecification explicitly since it is vital in establishing the asymptotic performance of our proposed GOF test.
(2) When we fit conditional intensities from various families, direct comparison between GOF measures from different families is not reasonable; we need to find a unifying space to access comparable GOF measures for all considered families, as illustrated by red lines in the right panel in Figure~\ref{fig:set_up_mismatch}.
This space, or rather function family, for GOF, should be carefully chosen such that it is both expressive enough and not too complex to develop a valid, consistent, and tractable test statistic thereon.

\begin{figure}[!htp]
\centering
\includegraphics[scale=0.33]{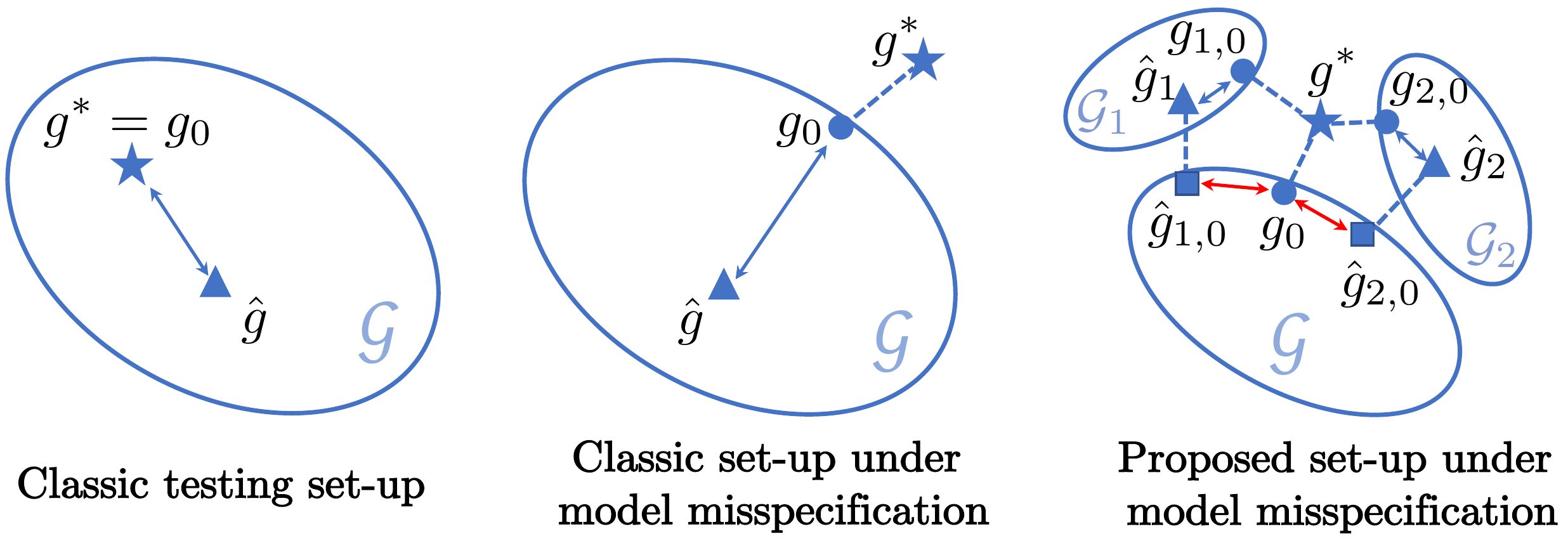}
 \vspace{-.2in}
\caption{The ground-truth is $g^*$; the assumed/specified family of candidate models is $\mathcal G$ in the left two panels and $\mathcal G_1$ and $\mathcal G_2$ in the right panel. GOF addresses how close the fitted model $\hat g$ is to the unknown true one $g^*$. In classic set up, one assumes there exists a $g_0 \in \mathcal G$ such that $g^* = g_{0}$. Under a more general model misspecification case where $g^*$ may not be contained in $\mathcal G$, classic GOF measures the distance between $\hat g$ and a good approximate $g_0 \in \mathcal G$ to $g^*$ (in K-L divergence sense). When we want to rank models, we need to find the approximation of $\hat g_1$, $\hat g_2$, and $g^*$ in a unifying space $\mathcal G$ and compare models $\hat g_1$, $\hat g_2$ therein.}\label{fig:set_up_mismatch}
\end{figure}

GOF test for the whole conditional intensity has been developed by \citet{ogata1988statistical,schoenberg2003multidimensional}, but the theory therein is established under the classic set-up and may fail to generalize to model misspecification setting. 
Moreover, the triggering effect is the main effect-of-interest in many Hawkes process models since (1) it characterizes the dynamics between events and (2)
the background rate can be separately estimated from the well-established declustering procedure \citep{zhuang2002stochastic,marsan2008extending,fox2016spatially}.
However, the background rate usually dominates the conditional intensity, and the existing tests may not detect subtle triggering function differences. Thus, a principled method to quantify the {\it goodness-of-fit for triggering effect in Hawkes processes under model misspecification} is essential. 

\textbf{Contribution.}  
In this paper, we present a non-parametric goodness-of-fit (GOF) test statistic, called the {\it Generalized Score} (GS), which can be broadly applied to evaluating the self-exciting part in Hawkes process generative models. The GS test is constructed by translating the GOF test into a two-sample test: whether the real data and synthetic data from the generative model have the same distribution? Based on this, we derive the likelihood score statistic with estimated piecewise constant kernels, which is flexible and has little model restrictions. We further establish asymptotic properties for MLE of the Quasi-model (QMLE), asymptotic $\chi^2$ null distribution, as well as the power function of GS statistic. The main ingredients of our analysis include (1) making a connection between GS test and the classic theory on MLE under model misspecification (QMLE) \citep{white1982maximum} and (2) generalizing the asymptotic properties of MLE of Hawkes process in \citet{ogata1978asymptotic} to model misspecification case. Our GS test provides a tool for model diagnosis and comparison of the self-exciting part in Hawkes process generative models. We demonstrate the effectiveness of our proposed test via numerical simulation and real-data examples.  

Several features of our GS test include: (1) We develop the test for generative models considering their inherent ``model misspecification nature''; (2) we focus on GOF of the triggering effect in Hawkes process models; (3) due to its construction, the GS statistic enjoys simple asymptotic distribution specified by $\chi^2$ distribution and analytical form of the power function, which enables us to 
calibrate the test without sampling.

\textbf{Related Work.}  
The one-sample goodness-of-fit problem is closely related to the two-sample test problem. For independent and identically distributed (i.i.d.) observations, two-sample test is well studied (e.g. energy statistic \citep{szekely2004testing,baringhaus2004new} and maximum mean discrepancy (MMD) \citep{gretton2012kernel}) and so is the GOF based on it. 
\citet{chwialkowski2016kernel} developed Stein operator based MMD (which they call squared Stein discrepancy) and changed the two-sample test statistic to a one-sample GOF test metric. \citet{bounliphone2015test} reformulated the one-sample GOF problem into a two-sample test problem and developed a model selection tool based on MMD. Extension of those methods to point process is missing until \citet{pmlr-v89-yang19a} proposed a kernel goodness-of-fit test by defining a Stein discrepancy for generic point process; However, a common drawback of a kernel-based test is that the null distribution is hard to evaluate (since they depend on infinite series involving the eigenvalues of the kernel). In contrast, our GS statistic follows a simple $\chi^2$ null distribution and is easy to calibrate. Our proposed method allows the distribution under the null to be flexible and estimated from data by comparing the data to the generative model via the test statistic. Other model diagnostics include likelihood of fitted model and the observed data \citep{schorlemmer2007earthquake} and Information Criterion (IC) \citep{chen2018performance}. The likelihood is the most commonly used, but overfitting makes it less convincing and even questionable (as discussed via numerical simulation). \citet{chen2018performance} assumed correct model specification, which typically does not hold in the real study, and the consistency result of IC is restricted to exponential triggering function case. For more on the kernel-based two-sample test as well as model diagnosis and selection method of the point process, one can refer to \citet{harchaoui2013kernel} and \citet{bray2013assessment}.

\section{Problem set-up}\label{background}

We first introduce some necessary mathematical preliminaries, and then formulate the one-sample goodness-of-fit problem into a two-sample test problem. 
\subsection{Mathematical background}
Consider a counting process $\{N(t ) : t \geq 0\}$, with associated history
 $\cH_{0,t} = \{t_i: 0 < t_i < t\} \  (t \geq 0)$ indicating the occurrence time of a sequence of discrete events. For simplicity, we use $\cH_{t}$ instead.  A point process is characterized by its conditional intensity, which is defined as:
$$
\lambda\left(t | \cH_{t}\right) 
=\lim _{\Delta t \downarrow 0} \EE\left[N\{(t, t+\Delta t) \} | \cH_{t}\right]/\Delta t.
$$
Hawkes process is a self-exciting point process with conditional intensity takes the following form:
 \begin{equation}
  \lambda\left(t | H_{t}\right)=\mu+\sum_{\left\{i: t_{i}<t\right\}} \phi\left(t-t_{i}\right), 
  \label{eq:hawkes}
\end{equation}
where $\mu$ is called the background intensity and $\phi : (0, \infty) \rightarrow [0, \infty)$ is called the triggering function. 

We assume the separability of triggering function into components for magnitude and time:
 $   \phi\left(t-t_{i}\right) = \alpha g(t-t_i),$
where temporal triggering function $g$ is a probability density function (p.d.f.) and $\alpha$ represents the magnitude of triggering effect, i.e. how many subsequent events one event can trigger on average.
Given the past trajectory $\cH_T$ with $N$ events, the log-likelihood over time interval $[0,T]$ can be expressed as:
$$\ell(\theta)=\sum_{i=1}^{N} \log \left(\lambda(t_{i}|\cH_{t_{i}})\right)-\int_{0}^{T}  \lambda(u |\cH_u) \mathrm{d} u.$$
One can refer to \citet{laub2015hawkes} and \citet{reinhart2018review} for a more comprehensive introduction of Hawkes process and a detailed deviation of its (log-)likelihood function.

\subsection{Problem formulation}



Suppose we have two data sequences $D
_z = (t_1^{(z)}, \ldots t_{N_z}^{(z)})$, $(z = 1, 2)$, which represent the arrival times of a sequence of events. Here, $D_1$ is from real world and $D_2$ is generated from the fitted generative model. Assume $D_1 \sim \lambda^*$ and $D_2 \sim \lambda$, where $\lambda^*$ is the unknown true conditional intensity and $\hat \lambda$ is the fitted one. Further assume both conditional intensities take form in \eqref{eq:hawkes}. We aim to test\begin{equation*}
    H_0': \lambda^* = \hat \lambda, \quad \mbox{versus} \quad H_1': \lambda^*\neq \hat \lambda.
\end{equation*}
Note that $\lambda^*$ in the above formulation is unknown. As illustrated in Figure~\ref{fig:set_up_mismatch}, we cast the problem above into testing $H_0: \lambda^*_0 = \hat \lambda_0$ by projecting the unknown ground-truth onto a piecewise constant function family $\cG$, on which we can develop a tractable goodness-of-fit test statistic. Empirically, this projection is done by mixing $D_1$ and $D_2$ and fitting a piecewise constant triggering function to the mixed data. Most importantly, when we have several candidate models, this statistic serves as a quantitative metric to compare models.

We calculate this test statistic in the following three steps: Mix the two data sequences up to get an aggregated sequence; Estimate $\theta_0$, maximizer of the Quasi-likelihood, from a Quasi-parameter space $\Theta$ for the aggregated sequence; Compute a test statistic $\Hat{GS}_T$ based on the estimation in the last step.

\textbf{Remark 1} (Singleton null). In our setting, the triggering function's unknown parameter is infinite-dimensional, so the null hypothesis $H_0$ is an uncountable set. To make the problem tractable, we cast $H_0'$ to $H_0$ by representing the unknown triggering function using some basis function (in our case, we use indicator function on mutually disjoint intervals \eqref{piecewiseconst}) such that we reduce this into a finite-dimensional problem. Besides, testing with unknown $\lambda^*$ is impractical, and we can only handle the projected problem $H_0$ to draw the inference for $H_0'$ anyways.

\textbf{Remark 2} (Model mismatch). We use the term "Quasi" here since commonly speaking, there will be a mismatch between a machine learning algorithm class we specify and the unknown true intensity, i.e., this class is misspecified as illustrated in Figure~\ref{fig:set_up_mismatch}. We add a prefix "Quasi-" for everything under this class, e.g., Quasi-conditional intensity and Quasi-likelihood function. Since conditional intensity characterizes a point process and we assume the triggering function $\phi^* = \alpha g^*$, 
we only need to specify the approximate class for $g^*$.  We choose a piecewise constant function class as $\cG$. The reason is three-fold: (i) a piecewise constant function can approximate any integrable function arbitrarily well by reducing the size of the discretization bin; (ii) there exists $g_0 \in \cG$, which corresponds to our estimand $\theta_0$, serving as a good approximation to $g^*$ and it is identifiable; (iii) most importantly, we can develop an easy-to-calibrate hypothesis test on this family. We will elaborate on these in the next section.

\section{Proposed goodness-of-fit test} \label{testprocedure}

The idea behind this test comes from a critical observation that under $H_0$ (or $H_0'$), mixing two sequences will lead to a Hawkes process with scaled intensity function. Based on this observation, we can derive a Generalized Score (GS) test, which is known to be locally most powerful (Neyman–Pearson lemma).

\textbf{Step 1: Mix two data sequences and model the aggregated sequence.} 

In this step, we derive the Quasi-log-likelihood function for the aggregated sequence. The proof is deferred to Appendix \ref{mixing}.
\begin{prop}[(Log-likelihood of mixing of two Hawkes processes)]\label{sum_2_hawkes}
\it Suppose we have two Hawkes processes with conditional intensities
\[\lambda_z(t |\cH_{z,t})=\mu^{(z)} +\sum_{\{i: t_{i}^{(z)}<t\}} \phi^{(z)}(t-t_{i}^{(z)} ) \ (z=1, 2).\] Define their mixing to be $N(t)= N_1(t)+N_2(t).$ Then it has background intensity $\mu = \mu^{(1)}+\mu^{(2)}$. Denote $T = \max\{t_{N_1}^{(1)},t_{N_2}^{(2)}\}$ and $\Phi^{(z)} (t) = \int_0^t \phi^{(z)} (u)\mathrm{d}u$. Given the past trajectory: $\cH_t=\cH_{1,t}\cup  \cH_{2,t}$, where $\cH_{z,t} =\{t_1^{(z)},\dots,t_{N_z}^{(z)}\},\ z=1,2,$ we  have that:
(i) Under $H_1$, let $z' = 2 (\text{or }1) \ \text{when} \ z = 1 (\text{or }2)$, the full model log-likelihood $\ell_1(\mu, \phi^{(1)},  \phi^{(2)}|\cH_{t})$ is
\begin{equation*}
\begin{split}
     - \mu T + \sum_{z=1}^2 & \sum_{i=1}^{N_z} \log\Big(\mu+
    \sum_{j<i}\phi^{(z)}(t_i^{(z)}-t_j^{(z)})
    \\ &+\sum_{j=1}^{N_{z'}}\phi^{(z')}(t_i^{(z)}-t_j^{(z')})
    \Big)-\Phi^{(z)} (T-t_i^{(z)}).
\end{split}
\end{equation*}

(ii) Under $H_0: \phi^{(1)} =  \phi^{(2)} = \phi$, the sub-model log-likelihood is 
$\ell_0(\mu, \phi|\cH_{t}) = \ell_1(\mu, \phi,  \phi|\cH_{t}).$
\end{prop}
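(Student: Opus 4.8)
The plan is to reduce the statement to the standard point-process log-likelihood identity $\ell = \sum_{k} \log \lambda(s_k\,|\,\cH_{s_k}) - \int_0^T \lambda(u\,|\,\cH_u)\,\mathrm{d}u$, applied to the mixed process $N = N_1 + N_2$ observed against the pooled history $\cH_t = \cH_{1,t} \cup \cH_{2,t}$. The conceptually central first step is to identify the conditional intensity of this superposition. Since $D_1$ (real) and $D_2$ (synthetic) are generated independently, enlarging each process's own filtration by the other's history leaves its intensity unchanged, so the compensator of $N$ against $\cH_t$ is the sum of the two individual compensators and hence $\lambda(t\,|\,\cH_t) = \lambda_1(t\,|\,\cH_{1,t}) + \lambda_2(t\,|\,\cH_{2,t})$. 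Reading off the constant part immediately gives $\mu = \mu^{(1)} + \mu^{(2)}$, and the two self-exciting sums combine to show that every past event contributes to the total intensity through the triggering function of its own type, independently of which type of event it subsequently triggers.

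Given this additive intensity I would evaluate it at an arbitrary event time $t_i^{(z)}$ by partitioning the strictly earlier events into those of the same type $z$ --- whose causal members are exactly $t_j^{(z)}$ with $j < i$ --- and those of the other type $z'$, namely all $t_j^{(z')} < t_i^{(z)}$. Adopting the convention $\phi^{(\cdot)}(s) = 0$ for $s \le 0$ (consistent with $\phi:(0,\infty)\to[0,\infty)$), the written sum $\sum_{j=1}^{N_{z'}} \phi^{(z')}(t_i^{(z)} - t_j^{(z')})$ automatically discards the non-causal terms, so the intensity at $t_i^{(z)}$ is precisely the argument of the logarithm in the stated formula. Summing $\log \lambda(t_i^{(z)}\,|\,\cH_{t_i^{(z)}})$ over both types $z = 1,2$ and over all $N_z$ events in each produces the event-sum term of $\ell_1$.

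For the compensator term I would interchange sum and integral and treat each event separately: the background contributes $\int_0^T \mu\,\mathrm{d}u = \mu T$, while event $t_i^{(z)}$ contributes $\int_{t_i^{(z)}}^T \phi^{(z)}(u - t_i^{(z)})\,\mathrm{d}u = \Phi^{(z)}(T - t_i^{(z)})$ after the substitution $v = u - t_i^{(z)}$ and the definition $\Phi^{(z)}(t) = \int_0^t \phi^{(z)}(u)\,\mathrm{d}u$. Assembling the $-\mu T$ term, the per-event log-intensities, and the per-event $-\Phi^{(z)}(T - t_i^{(z)})$ terms reproduces the displayed expression for $\ell_1$, which settles part (i). Part (ii) needs no further computation: imposing $\phi^{(1)} = \phi^{(2)} = \phi$ merges the two self-exciting sums into one sum over all past events, and substituting into the formula for $\ell_1$ yields $\ell_0(\mu,\phi\,|\,\cH_t) = \ell_1(\mu,\phi,\phi\,|\,\cH_t)$ by definition.

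The main obstacle is the justification in the first step that the superposition carries an additive conditional intensity with no extra interaction terms; this is exactly where independence of $D_1$ and $D_2$ enters and where one must be careful to read the intensity as a predictable projection against the enlarged (pooled) filtration rather than against each process's own filtration. Everything downstream is a mechanical application of the log-likelihood identity together with Fubini, so I would concentrate the rigor on the superposition argument and keep the remaining term-by-term evaluations brief.
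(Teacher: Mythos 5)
Your proposal is correct, and its skeleton matches the paper's: first establish that the superposition $N = N_1 + N_2$ has additive conditional intensity with respect to the pooled history, then feed that intensity into the standard point-process log-likelihood identity and evaluate the event-sum and compensator terms (with the convention $\phi^{(\cdot)} = 0$ on $(-\infty,0]$ absorbing the non-causal cross-terms, exactly as the paper remarks after the proposition). Where you differ is in how the additivity lemma is justified. The paper (Lemma~\ref{sum_intensity} in Appendix~\ref{mixing}) proves it elementarily from the infinitesimal definition of conditional intensity: it factorizes $\mathbb{P}(N(t+h)-N(t)=m \,|\, \mathcal{H}_t)$ into a product over the component processes (this is where independence silently enters) and runs a case analysis over $m = 0$, $m = 1$, $m > 1$, collecting $o(h)$ terms. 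You instead invoke the martingale/compensator formulation: independence means each $N_z$ retains its intensity after enlarging its filtration by the other process's history, so the compensator of the sum against the pooled filtration is the sum of compensators. Your route is more abstract and arguably more robust (it makes explicit the filtration-enlargement issue that the paper's product factorization glosses over, and it generalizes immediately beyond orderly processes), while the paper's route is self-contained and requires no martingale machinery. You also spell out the term-by-term evaluation of the log-likelihood (event logs plus $\mu T$ plus the $\Phi^{(z)}(T - t_i^{(z)})$ integrals), which the paper compresses into ``the result directly follows'' from the lemma; that added detail is harmless and makes the derivation more complete. One small caveat: your compensator argument is asserted rather than proved --- a fully rigorous version should cite or reproduce the fact that the martingale property of $N_z(t) - \int_0^t \lambda_z(u\,|\,\cH_{z,u})\,\mathrm{d}u$ is preserved under enlargement by an independent $\sigma$-field --- but as a proof outline it is sound.
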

Note that the triggering function takes value zero on $(-\infty,0]$ and thus we did not consider the triggering effect of events to its own history. By this proposition, we can model the aggregated data via a univariate Hawkes process with the same triggering function under $H_0$. For each event in process $z\ (z=1,2)$, it does not only dependent its original own history, but also depends on the history of another process $z'\ (z'=2,1)$. See an illustration of this in Figure~\ref{fig_sum_hawkes}.

\begin{figure}[htb]
  \centering
    \subfigure{\includegraphics[scale = 0.27]{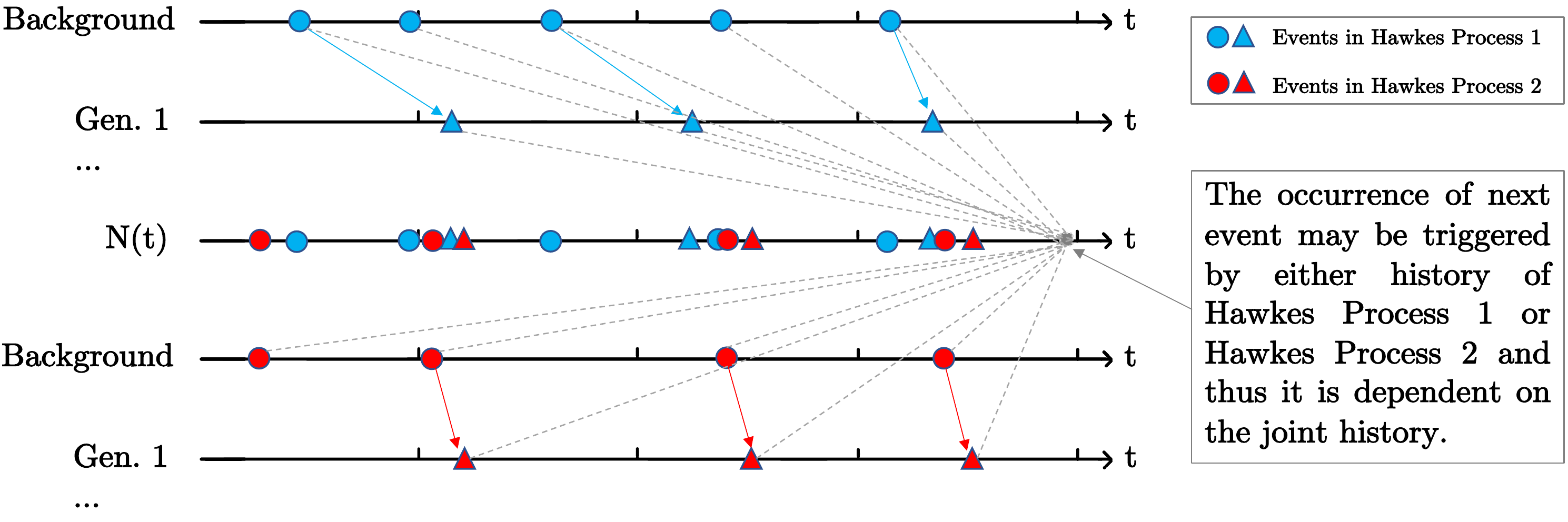}}
        \vspace{-.15in}
    \caption{Illustration of mixing of two Hawkes processes $N(t)$. Given the past sample trajectory, the upcoming event of $N(t)$ may (1) come from background poisson process of Hawkes process 1 or 2 OR (2) be a offspring of history $\cH_{1,t}$ or $\cH_{2,t}$. The grey dashed line in the figure illustrated scenario (2).}\label{fig_sum_hawkes}
\end{figure}

\textbf{Step 2: Discretize triggering function and learn quasi-conditional intensity.} 

In this step, we choose piecewise constant function as the approximation to the true triggering function for the aggregated sequence. This means we will discretize the time horizon into small intervals (which we call bins) and estimate a "weight" on each interval. In practice, the time horizon we discrete is truncated on $[0,T_0]$ and discretized into finitely many bins, since it is unnecessary to estimate infinite number of weights on infinite time horizon. More specifically, we assume $g_0(t) = \sum_{k=1}^{n_0} g_{k} \mathbf{1}_{B_{k}}(t)$ and estimate it from the following class:
\begin{equation}\label{piecewiseconst}
 \cG \triangleq  \Big\{g(t) \  \Big|  \ 0\leq g_{k} <\infty \  \text{and} \ \sum_{k=1}^{n_0} g_{k} \Delta t_{k} = 1 \Big\}.
\end{equation}
Here, $0 = \delta t_{0} <  \delta t_{1} < \dots<\delta t_{n_0}=T_0 $ and each bin $B_{k}=\left(\delta t_{k-1}, \delta t_{k}\right]$ has length $\Delta t_{k}=\delta t_{k}-\delta t_{k-1} (k=1,2,\dots,n_0)$. 

We apply Probability Weighted Histogram Estimation \citep{marsan2008extending,fox2016spatially} to learn the weights $g_k$ on each bin $B_k$, triggering magnitude $\alpha$ and background intensity $\mu$. 
Most importantly, our Quasi-conditional intensity defined in \eqref{piecewiseconst} satisfies the model assumption in \citet{fox2016spatially}, which guarantees the non-parametric stochastic declustering algorithm as an EM algorithm. It maximizes a lower bound on the Quasi-log-likelihood function, which is in fact the complete-data Quasi-log-likelihood function derived by \citet{veen2008estimation}. Thus, it outputs the MLE of Quasi-log-likelihood function (QMLE). See Appendix \ref{PWHE} for further details.

Before moving on, we need to formally define the estimand $\theta_0$ we want to learn. It is the parameter of Quasi-conditional intensity which maximizes the expected Quasi-log-likelihood.
\begin{defi}[(Estimand)]
\it The estimand $\theta_0$ is\begin{equation}\label{def_theta0}
    \theta_0 = \arg \max_{\theta \in \Theta} \EE [\ell_1(\theta|\cH_T)],
\end{equation}where the expectation is w.r.t. all trajectories $\cH_T$ and the expression of $\ell_1(\theta|\cH_T)$ is given in Proposition~\ref{sum_2_hawkes}.
\end{defi}
\textbf{Remark} (Information theoretic interpretation). 
Here, $\theta \in \Theta$ in (\ref{def_theta0}) has an information theoretic interpretation \citep{Akaike1998}. It parameterizes the Quasi-conditional intensity $\lambda = \lambda_{\theta}$ and $\theta_0$ defined above corresponds to $\lambda_0 = \lambda_{\theta_0}$, which minimizes Kullback-Leibler (K-L) divergence to the unknown ground-truth $\lambda^*$:\begin{equation*}
    \theta_0 = \arg \min_{\theta \in \Theta} \EE [\ell^*-\ell_1(\theta|\cH_T)]= \arg \min_{\theta \in \Theta} KL(\lambda^*||\lambda_\theta),
\end{equation*}where $\ell^*$ is the true log-likelihood.
That's why we call $\lambda_0$ ``the best approximation to $\lambda^*$'' or ``projection onto the user-specified space'' (as illustrated in Figure~\ref{fig:set_up_mismatch}).
 
\begin{prop}[(Global identifiability)]
\it $\theta_0$ defined by \eqref{def_theta0} is globally identifiable.
\end{prop}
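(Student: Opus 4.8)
The plan is to show that $\theta_0$ is the \emph{unique} global maximizer of the expected Quasi-log-likelihood $L(\theta) \triangleq \EE[\ell_1(\theta|\cH_T)]$, which is exactly the identifiability condition required by the QMLE theory of \citet{white1982maximum}. First I would rewrite $L(\theta)$ in a form that exposes its dependence on the candidate intensity $\lambda_\theta$. Using the compensator identity for point processes (the fact that $dN(t)-\lambda^*(t|\cH_t)\,dt$ is a martingale increment), the expectation of the stochastic-integral term converts into an ordinary integral against the true intensity $\lambda^*$, giving
\begin{equation*}
L(\theta) = \EE\!\left[\int_0^T \big(\lambda^*(t|\cH_t)\,\log\lambda_\theta(t|\cH_t) - \lambda_\theta(t|\cH_t)\big)\,dt\right].
\end{equation*}
This is the integral form of the (negative) K-L objective alluded to in the Remark, and it makes the concavity structure transparent.

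Next I would reparametrize. Since $\phi=\alpha g$ with $g(t)=\sum_{k=1}^{n_0} g_k \mathbf{1}_{B_k}(t)$, writing $\beta_k \triangleq \alpha g_k$ for the height of the triggering function on bin $B_k$ turns $\theta$ into $\eta=(\mu,\beta_1,\dots,\beta_{n_0})$, and the candidate intensity becomes \emph{affine} in $\eta$:
\begin{equation*}
\lambda_\eta(t|\cH_t) = \mu + \sum_{k=1}^{n_0}\beta_k\, c_k(t,\cH_t), \qquad c_k(t,\cH_t)\triangleq \sum_{i:\,t_i<t}\mathbf{1}_{B_k}(t-t_i).
\end{equation*}
The map $u\mapsto \lambda^* \log u - u$ is strictly concave on $(0,\infty)$, and composing a strictly concave function with the affine map $\eta\mapsto\lambda_\eta$ keeps $L$ concave on the convex parameter set. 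Hence the set of maximizers of $L$ is convex, and it suffices to rule out a nontrivial flat direction. The reparametrization is invertible whenever $\alpha=\sum_k \beta_k\Delta t_k>0$, so uniqueness of $\eta_0$ transfers back to uniqueness of $\theta_0=(\mu,\alpha,g)$.

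The decisive step is upgrading concavity to \emph{strict} concavity. Differentiating twice gives the negative semidefinite Hessian
\begin{equation*}
\nabla^2_\eta L(\eta) = -\,\EE\!\left[\int_0^T \frac{\lambda^*(t|\cH_t)}{\lambda_\eta(t|\cH_t)^2}\,\nabla_\eta\lambda_\eta\,\nabla_\eta\lambda_\eta^\top\,dt\right],
\end{equation*}
where $\nabla_\eta\lambda_\eta=(1,c_1,\dots,c_{n_0})^\top$. This matrix is negative definite precisely when no nonzero vector $v=(v_0,\dots,v_{n_0})$ satisfies $v_0+\sum_k v_k c_k(t,\cH_t)=0$ on a set of positive probability-weighted measure. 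I would establish this full-rank condition by isolating the coordinates: on the (positive-probability) event that $t$ precedes the first event all $c_k$ vanish, forcing $v_0=0$; and on the event that exactly one past event has its lag in bin $B_k$ while no other event contributes, only $c_k=1$, forcing $v_k=0$. Because the bins $B_k$ are mutually disjoint, these isolating configurations occur with positive probability under $\lambda^*$ as long as the true process actually generates events and reaches every lag-bin (guaranteed by $\mu^*>0$ together with the stationarity/regularity assumptions inherited from \citet{ogata1978asymptotic}). Therefore the Hessian is negative definite, $L$ is strictly concave, and its maximizer $\theta_0$ is unique.

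I expect the main obstacle to be exactly this non-degeneracy argument: proving that the information matrix (the Gram matrix of the regressors $1,c_1,\dots,c_{n_0}$) is positive definite rather than merely positive semidefinite. The difficulty is not algebraic but probabilistic --- one must verify that the true, possibly misspecified, process $\lambda^*$ assigns positive probability to configurations that separately activate each lag-bin, which is what pins down a unique height $\beta_k$ per bin. This is where the minimal regularity hypotheses on $\lambda^*$ (positive background rate and the conditions ensuring a well-defined stationary regime) are indispensable; without them a bin could be starved of data and its weight left unidentified. The remaining pieces --- the compensator identity, the concavity, and the reparametrization back to $(\mu,\alpha,g)$ --- are routine.
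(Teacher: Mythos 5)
Your proposal is correct in substance but follows a genuinely different route from the paper's own proof. The paper argues pathwise: for each fixed trajectory, $\ell_1(\theta|\cH_T)$ is a linear function of $\theta$ plus a sum of logarithms of linear functions of $\theta$, hence concave; and because the piecewise-constant parameterization makes the likelihood value depend only on which bins the inter-event lags fall into, the expectation $\EE[\ell_1(\theta|\cH_T)]$ collapses to a countable mixture $\sum_i p_i\,\ell_{1,i}(\theta)$ of concave functions, which is again concave, from which uniqueness of the maximizer is asserted. You instead pass through the compensator identity to obtain the deterministic representation $\EE[\int_0^T(\lambda^*\log\lambda_\theta-\lambda_\theta)\,dt]$, compute the Hessian explicitly, and prove negative definiteness via a non-degeneracy argument on the regressors $1,c_1,\dots,c_{n_0}$. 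What your route buys is precisely the step the paper glosses over: the main text announces a ``(strictly) concave program,'' but the appendix's mixture argument only yields concavity, not strictness, so uniqueness is never rigorously pinned down there; your Gram-matrix argument --- that configurations activating each bin separately occur with positive probability under $\lambda^*$ --- supplies exactly that missing ingredient. What the paper's route buys is elementarity: no martingale machinery and no extra regularity on $\lambda^*$ beyond what is already assumed. Two caveats on your write-up: (i) the estimand $\theta_0$ lives in the full-model space $\Theta\subset\RR^{1+2n_0}$ with separate heights $\phi_k^{(1)},\phi_k^{(2)}$, so the affine parameterization and the rank argument must be run with $2n_0+1$ regressors $1,c_k^{(1)},c_k^{(2)}$ (where $c_k^{(z)}$ counts lags to process-$z$ events in bin $B_k$); the extension is mechanical since event labels are known in the mixed sequence, but as written your $\eta$ covers only the sub-model. (ii) The inverse map back to $(\mu,\alpha,g)$ indeed fails at $\alpha=0$, but this is immaterial: the paper's $\Theta$ is already parameterized by $\phi_k=\alpha g_k$, i.e., your $\eta$ \emph{is} the paper's $\theta$, and no transfer step is needed.
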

We prove global identifiability by showing \eqref{def_theta0} is a (strictly) concave program. Most importantly, when the fitted model is actually the same as the unknown true one, $\theta_0$ will lie in $\Theta_0$, i.e. $H_0$ holds under $H_0'$. This justifies our projected test $H_0$, indicating that the difference between mismatched models represents the difference between true models. The detailed proof is deferred to Appendix~\ref{all_asym}. We should make a mild assumption that $\theta_0$ is interior to the convex Quasi-parameter space $\Theta$. This makes sure that we have $\nabla_{\theta} \EE [ \ell_1 (\theta_0|\cH_T)] = 0,$ which guarantees that $\theta_0$ is the estimand which our QMLE is consistent for. We will show this in detail later in the Appendix~\ref{all_asym}.

\textbf{Step 3: Compute GS statistic.} 

Here, we call the singleton that we want to test a sub-model. We call the Quasi-parameter space under $H_0$ sub-model Quasi-parameter space and denote it by $\Theta_0$. Similarly, $\Theta$ is the full model Quasi-parameter space, or rather, Quasi-parameter space under $H_1$. Under our proposed approximation class \eqref{piecewiseconst}, the Quasi-conditional intensity has a parameterization $$\theta = (\mu,\phi_1^{(1)},\dots,\phi_{n_0}^{(1)},\phi_1^{(2)},\dots,\phi_{n_0}^{(2)})^\intercal,$$ where $\mu \triangleq \mu^{(1)}+\mu^{(2)}$ and $\phi_k^{(z)} = \alpha^{(z)} g_k^{(z)}$. The full model Quasi-parameter space is given by
\begin{equation*}
    \begin{split}
        \Theta = \Big\{\theta \ \big |\ & \mu > 0, 0\leq \alpha^{(z)} < 1, \\ & g^{(z)}=\sum_{k=1}^{n_0} g^{(z)}_{k} \mathbf{1}_{B_{k}} \in \cG \ \  (z=1,2)\Big\}\subset \RR^{d},
    \end{split}
\end{equation*}
where $d=1+2 n_0$. Note that the second constraint guarantees the stationarity and ergodicity. We further denote
$$\phi^{(z)} = (\phi_1^{(z)},\dots,\phi_{n_0}^{(z)})^\intercal =  (\alpha^{(z)}g_1^{(z)},\dots,\alpha^{(z)}g_{n_0}^{(z)})^\intercal$$ to be the Quasi-parameter of the triggering function of Hawkes process $z \ (z=1,2)$. The sub-model Quasi-parameter space is
$$\Theta_0 = \{\theta \in \Theta \ | \   \phi_k^{(1)}-\phi_{k}^{(2)}=0, \ k=1,\dots,n_0\}\subset \RR^{1+n_0}.$$
Denote the number of constraints (we'll see later it's in fact degree-of-freedom of our test statistic) $r=n_0 = \dim \Theta - \dim \Theta_0,$ the null hypothesis $H_0: \theta_0 \in \Theta_0$ can be re-expressed as $H_0: h(\theta_0) = \phi^{(1)}-\phi^{(2)} =  0$, where $h: \RR^{d} \rightarrow \RR^{r}$. We consider a test: \[H_0:h(\theta_0)=0, \quad \mbox{versus} \quad H_1: h(\theta_0)\neq0,\] and the following test statistic:

\begin{defi}[(GS statistic)] \it Suppose the past sample trajectory is $\cH_T$. Denote 
\begin{align*}
    &S_T(\theta)=   \frac{\partial \ell_1(\theta|\cH_T)}{\partial \theta}\in  \RR^{ d},A_T(\theta)=   S_T(\theta) S_T^\intercal(\theta) \in \RR^{d\times d},
    \\&H(\theta)= \frac{\partial h(\theta)}{\partial \theta} \in \RR^{r\times d}, \ 
    B_T(\theta)= - \frac{\partial^2 \ell_1(\theta| \cH_T)}{\partial \theta \partial \theta^\intercal}  \in \RR^{d\times d},
\end{align*}where $H$ exists and has full row rank $r$, and log-likelihood $\ell_1$ is given in Proposition~\ref{sum_2_hawkes}. Then, the Generalized Score (GS) test statistic is given by
\begin{align*}
     \Hat{GS}_T &= S_T^\intercal(\Hat{\theta}_{QMLE}) \Hat{\Sigma}^{-1} S_T(\Hat{\theta}_{QMLE}),
\end{align*}where $\Hat{\theta}_{QMLE} \in \Theta_0$ is QMLE under null hypothesis and $ \Hat{\Sigma}^{-1}$ is given by: \begin{equation*}
    \begin{split}
        \Hat{\Sigma}^{-1} =& B_T^{-1}(\theta)  H(\theta)^\intercal \Big(H(\theta)  B_T^{-1}(\theta) \\
        &A_T(\theta)B_T^{-1}(\theta)H(\theta)^\intercal\Big)^{-1} H(\theta)B_T^{-1}(\theta)\bigg|_{\theta=\Hat{\theta}_{QMLE}}.
    \end{split}
\end{equation*}
\end{defi}
Later, we will show $T \Hat{\Sigma}^{-1}$ is a consistent estimator of inverse of covariance matrix of $S_T(\Hat{\theta}_{QMLE})/\sqrt T$. Closed-form expression for $\Hat{GS}_T$ is given in Appendix \ref{explicitGS}.



Based on our testing procedure for two single data sequences above (steps $1\sim 3$), we state a more general version for two sets of data sequences in Algorithm~\ref{algo_goodness}.


\begin{algorithm}[htp]
    \caption{Non-parametric goodness-of-fit test for self-exciting point processes}\label{algo_goodness}
    {\bf Input:} Two set of i.i.d. data sequences $D_1 = \{D_{1,1},\dots,D_{1,L}\}$ and $D_2 = \{D_{2,1},\dots,D_{2,L}\}$. \\
    {\bf Initialization:} $n_0$ bins on time horizon $[0,T_0]$; repeat times $K$; number of sequences $N$ to calculate one GS statistic $\Hat{GS}_T$. \\
    {\bf Output:} $K$ i.i.d. GS statistics .
    \begin{steps}
        \item Mix $D_{1,i}$ and $D_{2,i}$ to get the aggregated sequence $D_i^{agg}$ ($i=1,\dots,L$). 
        \item Apply Probability Weighted Histogram Estimation to learn QMLE. 
        \item Repeat the procedure for $K$ times: randomly shuffle the order of sequences in the $D_1$ and repeat step I to get a different set of aggregated sequences, from which we randomly choose $N$ sequences to calculate one $\Hat{GS}_T$. 

    \end{steps}
\end{algorithm}


The stationarity of a stochastic process means the unconditional probability distribution does not change when shifted in time. More specifically, for a stochastic process $N(t)$, for all $t \in \RR$, $N(t,t+\delta]$ follows a same probability distribution as long as $\delta > 0$ is fixed. Thus, when $T \rightarrow \infty$, we will have $$\frac{\EE[\ell_1(\theta|\cH_T)]}{T \  \EE[\ell_1(\theta|\cH_1)]} \rightarrow 1.$$ This shows that the estimand defined by maximum expected log-likelihood principle will not vary with different time horizon $T$ (otherwise, $\theta_0$ is not well-defined). Most importantly, this also shows that learning with $L$ short sequences on time horizon $[0,T_0]$ is equivalent to learning with one long sequence on time horizon $[0,LT_0]$, which justifies our generalization to the testing on two sets of data sequences in Algorithm~\ref{algo_goodness}.


\section{Theoretical Analysis}\label{theoreticalresult}

Here, we will prove the asymptotic performance of our GS statistics by establishing a novel connection with classic results in statistics for QMLE and the GS test based on it \citep{white1982maximum}. We provide a generalization of the asymptotic properties of MLE for Hawkes process \citep{ogata1978asymptotic} to model mismatch case, based on which we get the asymptotic behaviors of testing procedure such as score test and Wald test. The proofs and numerical illustration on why we choose score test over Wald test are deferred to Appendices~\ref{all_asym} and \ref{add_exp}.

We use $\theta_0$ to denote the projection of ground-truth and test $H_0: \theta_0 \in \Theta_0$ against $H_1: \theta_0 \not \in \Theta_0$. Apparently, under different hypothesis, $\theta_0$ cannot be the same. To avoid confusion, we say the projection is $\theta_0 = \theta_1 \in \Theta_0$ under $H_0$ and $\theta_0 = \theta_2 \not\in \Theta_0$ under $H_1$.

\begin{lem}[(Asymptotic properties of Quasi-MLE)]\label{asym_qmle}\it Let $\Hat{\theta}_{QMLE}$ and $\Tilde{\theta}_{QMLE}$ be QMLE under $H_0$ and $H_1$.
For piecewise constant triggering function family \eqref{piecewiseconst}, 
QMLE satisfies the following asymptotic properties: 

(i) Convergence to $\theta_0$ almost surely. When $T \overset{}{\to} \infty$,
\begin{align*}
    \text{under $H_0$: }  \ \Hat{\theta}_{QMLE} \overset{a.s.}{\to} \theta_1;  \  \ \text{under $H_1$: } \ \Tilde{\theta}_{QMLE} \overset{a.s.}{\to} \theta_2 ;
\end{align*}
(ii) Asymptotic normality. Define 
$A(\theta)=\EE \left[ A_T(\theta)\right]/T$ and $B(\theta)=\EE \left[ B_T(\theta)\right]/T,$ when $T \overset{}{\to} \infty$, we will have:
\begin{align*}
    \text{Under $H_0$: } \ \ \sqrt{T} (\Hat{\theta}_{QMLE} - \theta_1) \overset{d}{\to} N (0,\Sigma^{-1}(\theta_1));\\
    \text{Under $H_1$: }  \ \ \sqrt{T} (\Tilde{\theta}_{QMLE} - \theta_2) \overset{d}{\to} N (0,\Sigma^{-1}(\theta_2)),
\end{align*} where $\Sigma^{-1}(\theta) = B^{-1}(\theta)A(\theta)B^{-1}(\theta)$.

(iii) We also have asymptotically normality of the Quasi-score function, no matter under $H_0$ or $H_1$:$$\frac{1}{\sqrt{T}} \frac{\partial  \ell_1 (\theta|\cH_T)}{\partial \theta}\bigg|_{\theta = \theta_0} \overset{d}{\to} N (0,A(\theta_0)) \ \ \ \text{  as  } \ \ \ T \overset{}{\to} \infty.$$
\end{lem}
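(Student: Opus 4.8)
The plan is to follow the classical Quasi-MLE program of \citet{white1982maximum}, specialized to the counting-process likelihood, and to supply the point-process-specific limit theorems by adapting the arguments of \citet{ogata1978asymptotic} from the correctly-specified to the misspecified regime. Throughout I would exploit the two structural facts made available above: the stability constraint $\alpha^{(z)} < 1$ forces the mixed process to be stationary and ergodic, and the interiority assumption on $\theta_0$ gives the first-order condition $\nabla_\theta \EE[\ell_1(\theta_0|\cH_T)] = 0$, so that the score has mean zero at the pseudo-true parameter.

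I would prove (i) first, as a standard consistency argument for an M-estimator. By stationarity and ergodicity, the ergodic theorem gives pointwise convergence $\ell_1(\theta|\cH_T)/T \to \EE[\ell_1(\theta|\cH_1)]$ for each fixed $\theta$; strengthening this to convergence uniform over the compact, convex parameter space --- using the concavity established in the global-identifiability proposition together with continuity of the per-unit-time log-likelihood --- yields a uniform law of large numbers. Since $\theta_0$ (namely $\theta_1$ under $H_0$ and $\theta_2$ under $H_1$) is the unique maximizer of the limit, the standard argmax argument delivers $\Hat{\theta}_{QMLE} \to \theta_0$ almost surely, separately within $\Theta_0$ under $H_0$ and within $\Theta$ under $H_1$.

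For (iii) I would write the score as a stochastic integral against the counting measure,
$$ S_T(\theta_0) = \int_0^T \frac{\partial_\theta \lambda_{\theta_0}(u)}{\lambda_{\theta_0}(u)}\, dN(u) - \int_0^T \partial_\theta \lambda_{\theta_0}(u)\, du, $$
and then substitute $dN(u) = dM^*(u) + \lambda^*(u)\,du$, where $M^*$ is the innovation martingale associated with the true intensity $\lambda^*$. This splits $S_T(\theta_0)$ into a martingale term $M_T = \int_0^T \lambda_{\theta_0}^{-1}(\partial_\theta\lambda_{\theta_0})\, dM^*$ and a predictable mean-zero remainder, whose vanishing expectation is exactly the first-order condition defining $\theta_0$. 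To the martingale part I would apply Rebolledo's martingale central limit theorem, checking that its predictable quadratic variation, normalized by $T$, converges (again by the ergodic theorem) and that a Lindeberg-type jump condition holds; the remainder is a centered additive functional of the stationary, exponentially-mixing process and is handled by a central limit theorem for mixing sequences. Assembling the two pieces jointly identifies the limit variance as $A(\theta_0) = \lim_T \text{Var}(S_T(\theta_0))/T$, giving $S_T(\theta_0)/\sqrt{T} \overset{d}{\to} N(0, A(\theta_0))$.

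Finally (ii) follows by the usual sandwich expansion. Since $\Hat{\theta}_{QMLE}$ solves the (restricted) score equation, a first-order Taylor expansion of $S_T$ about $\theta_0$ gives $\sqrt{T}(\Hat{\theta}_{QMLE} - \theta_0) = [B_T(\bar\theta)/T]^{-1}\, S_T(\theta_0)/\sqrt{T}$ for an intermediate $\bar\theta$; the ergodic theorem gives $B_T(\bar\theta)/T \to B(\theta_0)$ via the consistency from (i), and combining with (iii) through Slutsky's theorem yields the sandwich law $N(0, B^{-1}(\theta_0)A(\theta_0)B^{-1}(\theta_0))$. I expect the genuine obstacle to be part (iii): unlike Ogata's correctly-specified setting, the score at $\theta_0$ is no longer a pure martingale, so one must control the predictable misspecification remainder and verify the mixing and ergodicity of the Hawkes process rigorously (which in turn leans on the $\alpha < 1$ stability condition). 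It is precisely this remainder that makes $A(\theta_0) \neq B(\theta_0)$ and forces the sandwich --- rather than the classical information-equality --- form of the variance.
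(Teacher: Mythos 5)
Your proposal is correct in outline, but it takes a genuinely different route from the paper. The paper's proof is a "verify-and-modify" argument: it checks that Ogata's (1978) regularity assumptions hold for the piecewise-constant quasi-intensity --- stationarity and ergodicity from $\alpha^{(z)}<1$ (assumption A), smoothness and boundedness from linearity of $\lambda_\theta$ in $\theta$ (assumption B), and a \emph{pathwise} stochastic approximation of $\lambda(t|\cH_{0,t})$ to $\lambda(t|\cH_{-\infty,t})$ coming from the truncation of the triggering function at $T_0$ (assumption C) --- and then re-traces Ogata's Theorems 1, 2, 4, 5, flagging exactly what breaks under misspecification: the first-order condition $\nabla_\theta\EE[\ell_1(\theta_0|\cH_T)]=0$ survives, but the information equality fails, so $A(\theta_0)\neq B(\theta_0)$, the score covariance becomes $A(\theta_0)$, and the QMLE covariance becomes the sandwich $B^{-1}AB^{-1}$; the truncation is also what upgrades Ogata's convergence in probability to almost-sure convergence. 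Your proof instead is self-contained: ergodic theorem plus a concavity-based uniform law of large numbers for (i), the innovation-martingale decomposition $dN = dM^* + \lambda^*\,dt$ with Rebolledo's CLT and a mixing CLT for (iii), and the standard Taylor/sandwich step for (ii). What your route buys is rigor precisely where the paper is thinnest: under misspecification the score at $\theta_0$ is \emph{not} a martingale (the compensator involves $\lambda^*$, not $\lambda_{\theta_0}$), so Ogata's martingale-CLT proof of his Theorem 4 does not carry over verbatim, and the paper's "modification" simply swaps in $A(\theta_0)$ without addressing this; your decomposition into a martingale plus a centered predictable drift is exactly the missing ingredient. What it costs is two nontrivial verifications the paper avoids by citation: establishing (exponential) mixing of the stationary Hawkes process, and --- the one point you should not gloss --- a \emph{joint} CLT for the correlated martingale and drift pieces, since two marginal CLTs do not by themselves identify the limit law of their sum; a clean fix is to apply a stationary-mixing CLT to the unit-interval increments of the whole score $S_T(\theta_0)$ rather than to the two pieces separately, which also identifies the limit variance as the long-run variance $A(\theta_0)$. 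A minor caveat shared by both arguments: the argmax/consistency step tacitly restricts $\Theta$ to a compact subset, since the space as defined (with $\mu$ unbounded and $\alpha^{(z)}<1$ open) is not compact.
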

\textbf{Remark.}
The score function should have the Fisher Information Matrix (FIM) $I(\theta^*)$ as its asymptotic covariance matrix when the model is correct. Using FIM will break the asymptotic $\chi^2$ distribution in the model mismatch case. That's why we need to consider the model mismatch explicitly. Even though we cannot correctly specify the function family for unknown ground-truth, using $A(\theta_0)$ instead of FIM as the covariance matrix will still yield correct asymptotics for our proposed test. Moreover, by Theorem 1 in \citet{ogata1978asymptotic}, one can verify that Information Matrix Equivalence Theorem in \citet{white1982maximum} still holds for stationary point process, i.e. $\theta_0 = \theta^*$ and $A(\theta_0) = B(\theta_0) = I(\theta_0)$ hold if and only if the model is correctly specified. Thus, our results simplify to the form in \citet{ogata1978asymptotic} in the absence of model mismatch. Though the asymptotic covariance matrix of QMLE is no longer inverse of the FIM $I^{-1}(\theta^*)$, we can still estimate it consistently. 

\begin{thm}[(Asymptotic null distribution of $\Hat{GS}_T$)]\label{asym_null}
\it Under $H_0$, the Generalized Score (GS) test statistic has an asymptotic $\chi^2$ distribution. More specifically,$$\Hat{GS}_T \overset{d}{\to} \chi^2_r  \ \ \ \text{  as  } \ \ \ T \overset{}{\to} \infty.$$
\end{thm}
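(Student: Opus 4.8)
The plan is to recognize $\Hat{GS}_T$ as a studentized quadratic form in the \emph{restricted} score $S_T(\Hat{\theta}_{QMLE})$ and to show that, after normalizing by $\sqrt{T}$, this quantity converges to a non-degenerate Gaussian vector in $\RR^r$ whose quadratic form against its own inverse covariance is exactly $\chi^2_r$. Throughout write $\theta_1$ for the projected parameter under $H_0$ (so $h(\theta_1)=0$) and set $A=A(\theta_1)$, $B=B(\theta_1)$, $H_\ast=H(\theta_1)$. Because $\Hat{\Sigma}^{-1}$ carries the factor $H B_T^{-1}$ on the right (and $B_T^{-1}$ is symmetric), the statistic depends on the data only through $\eta_T \triangleq H(\theta)B_T^{-1}(\theta)S_T(\theta)\big|_{\theta=\Hat{\theta}_{QMLE}}$, namely
\[
\Hat{GS}_T = \eta_T^\intercal\bigl(H B_T^{-1}A_T B_T^{-1}H^\intercal\bigr)^{-1}\eta_T\Big|_{\theta=\Hat{\theta}_{QMLE}},
\]
so the whole argument reduces to finding the limit law of $\sqrt{T}\,\eta_T$ and of the central matrix.

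First I would characterize the restricted QMLE through its KKT system. Since $\theta_1$ is interior to $\Theta$ and $H_\ast$ has full row rank $r$, the constrained maximizer of $\ell_1$ over $\Theta_0=\{h=0\}$ satisfies, for some multiplier $\hat\lambda\in\RR^r$,
\[
S_T(\Hat{\theta}_{QMLE}) = H(\Hat{\theta}_{QMLE})^\intercal\hat\lambda,\qquad h(\Hat{\theta}_{QMLE})=0.
\]
By Lemma~\ref{asym_qmle}(i), $\Hat{\theta}_{QMLE}\overset{a.s.}{\to}\theta_1$, so I can Taylor-expand $S_T$ and $h$ about $\theta_1$, divide by $\sqrt{T}$, and use $B_T/T\to B$ (recall $\partial S_T/\partial\theta^\intercal=-B_T$) to obtain the linearized saddle-point system for $u=\sqrt{T}(\Hat{\theta}_{QMLE}-\theta_1)$ and $m=\hat\lambda/\sqrt{T}$:
\[
B\,u + H_\ast^\intercal m = s_T + o_p(1),\qquad H_\ast\, u = o_p(1),
\]
where $s_T = S_T(\theta_1)/\sqrt{T}$. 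Solving gives $m = (H_\ast B^{-1}H_\ast^\intercal)^{-1}H_\ast B^{-1}s_T + o_p(1)$, and substituting the KKT relation into $\eta_T$ yields $\sqrt{T}\,\eta_T = H_\ast B^{-1}H_\ast^\intercal\, m + o_p(1) = H_\ast B^{-1}s_T + o_p(1)$.

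By Lemma~\ref{asym_qmle}(iii) with $\theta_0=\theta_1$, we have $s_T\overset{d}{\to}Z\sim N(0,A)$, hence $\sqrt{T}\,\eta_T\overset{d}{\to}\xi\triangleq H_\ast B^{-1}Z\sim N(0,K)$ with $K=H_\ast B^{-1}AB^{-1}H_\ast^\intercal$. For the central matrix, the a.s. limits $B_T/T\to B$ and $A_T/T\to A$, together with continuity of $H$ and consistency of $\Hat{\theta}_{QMLE}$, give $T\,\bigl(H B_T^{-1}A_TB_T^{-1}H^\intercal\bigr)\big|_{\Hat{\theta}_{QMLE}}\to K$. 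Writing the statistic as $\Hat{GS}_T = (\sqrt{T}\,\eta_T)^\intercal\bigl[T\,(H B_T^{-1}A_TB_T^{-1}H^\intercal)\bigr]^{-1}(\sqrt{T}\,\eta_T)$, Slutsky's theorem and the continuous mapping theorem give $\Hat{GS}_T\overset{d}{\to}\xi^\intercal K^{-1}\xi$; since $\xi\sim N(0,K)$ with $K\succ 0$, this quadratic form is $\chi^2_r$, which is the claim.

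The main obstacle is not the distributional algebra above but the two convergences it rests on: the locally uniform law of large numbers $B_T/T\to B$ near $\theta_1$ and, especially, the consistent estimation $A_T/T\to A$ of the asymptotic score covariance \emph{under misspecification}. These are precisely the ingredients furnished by Lemma~\ref{asym_qmle}, which generalizes Ogata's asymptotic MLE theory to the Quasi-model and supplies the stationarity and ergodicity (guaranteed by the constraint $0\le\alpha^{(z)}<1$) underlying the martingale central limit theorem behind part~(iii). A secondary point to verify is that $K$ is nonsingular, which follows from $H_\ast$ having full row rank $r$ together with positive definiteness of $A$ and $B$ (the latter from the strict concavity used in the global-identifiability proposition); this also ensures that the inverses appearing in $\Hat{\Sigma}^{-1}$ exist with probability tending to one.
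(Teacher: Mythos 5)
Your proposal is correct and follows essentially the same route as the paper's proof: both characterize the restricted QMLE through its Lagrange/KKT conditions, apply a mean-value expansion about $\theta_1$ together with the constraint to express the pivotal $r$-dimensional quantity as $H_\ast B^{-1}S_T(\theta_1)/\sqrt{T} + o_p(1)$, invoke Lemma~\ref{asym_qmle} for the score CLT and the consistency of $A_T/T$ and $B_T/T$, and finish by Slutsky and the continuous mapping theorem. The only cosmetic difference is that the paper carries out the algebra in terms of the multiplier $\sqrt{T}\zeta_T$ (following White's treatment), while you work with $\eta_T = H B_T^{-1} S_T$; the two are related by the invertible matrix $H B_T^{-1} H^\intercal$, so the arguments are equivalent.
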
 

Note that here the degree of freedom is $r=n_0$, which is exactly the number of bins we discretize $[0,T_0]$ into. 

\begin{thm}[(Power function of GS test)]\label{asym_power}
\it Under $H_1$, the GS statistic follows a asymptotic noncentral $\chi^2$ distribution with degree of freedom $r$ and noncentrality parameter $T \norm{\phi^{(1)}-\phi^{(2)}}_2^2$. For any critical value $c>0$, when $T\rightarrow \infty$, the test power is:
$$\frac{\PP_{H_1} (\Hat{GS}_T > c)}  {Q_{r/2}(\sqrt{T }\norm{\phi^{(1)}-\phi^{(2)}}_2,\sqrt{c})} \rightarrow 1,$$
where $\|\cdot\|_2$ is the vector $\ell_2$ norm and $Q_{M}(a,b)$ is the Marcum-Q-function. 
\end{thm}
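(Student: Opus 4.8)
The plan is to follow the same line as the proof of Theorem~\ref{asym_null}, but now to track the nonzero mean that the restricted score function acquires under $H_1$; this mean shift is what turns the central $\chi^2_r$ limit into a noncentral one, and the squared whitened length of the shift is the noncentrality parameter. Concretely, I would first extend Lemma~\ref{asym_qmle}(i) to the constrained estimator: under $H_1$ the null-restricted QMLE $\Hat{\theta}_{QMLE}\in\Theta_0$ converges almost surely to $\bar\theta = \arg\max_{\theta\in\Theta_0}\EE[\ell_1(\theta|\cH_T)]$, the best null approximation, whereas the unrestricted QMLE $\Tilde{\theta}_{QMLE}$ converges to $\theta_2\notin\Theta_0$. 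Because $\theta_2$ violates the constraint $h(\theta)=\phi^{(1)}-\phi^{(2)}=0$, the population score evaluated at $\bar\theta$ fails to vanish in the $r$ constrained directions spanned by $H(\bar\theta)$, and this nonvanishing is precisely the source of the noncentrality.

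Next I would analyze the normalized restricted score $S_T(\Hat{\theta}_{QMLE})/\sqrt{T}$. Taylor-expanding around $\bar\theta$ and substituting the first-order (KKT) conditions that characterize $\Hat{\theta}_{QMLE}$ on $\Theta_0$, I would show, using Lemma~\ref{asym_qmle}(iii), that $S_T(\Hat{\theta}_{QMLE})/\sqrt{T}$ is asymptotically Gaussian with covariance $A(\theta_0)$ and a deterministic mean shift proportional to $\sqrt{T}\,h(\theta_2)=\sqrt{T}(\phi^{(1)}-\phi^{(2)})$, mapped through $H$ and $B$. Combining this with the consistency of $T\Hat{\Sigma}^{-1}$ for the inverse asymptotic covariance of the restricted score (established alongside Theorem~\ref{asym_null}) and applying the continuous-mapping theorem to the quadratic form, the statistic decomposes asymptotically as $\Hat{GS}_T \approx \norm{Z+\nu_T}_2^2$ with $Z\sim N(0,I_r)$ and $\nu_T$ deterministic. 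Since $\Hat{\Sigma}^{-1}$ is built from $A$, $B$, and $H$ so as to whiten exactly the constrained directions, this is the survival structure of a noncentral $\chi^2_r$ with noncentrality $\norm{\nu_T}_2^2$.

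It then remains to evaluate $\norm{\nu_T}_2^2$ in closed form and to convert the tail probability. Using the explicit expressions for $A$, $B$, and $H$ in the piecewise-constant family (Appendix~\ref{explicitGS}), I would verify that the whitened mean collapses to the bare Euclidean quantity, so that $\norm{\nu_T}_2^2 = T\norm{\phi^{(1)}-\phi^{(2)}}_2^2$. Finally, invoking the standard identity that a $\chi^2_r(\lambda)$ variate satisfies $\PP(X>c)=Q_{r/2}(\sqrt{\lambda},\sqrt{c})$ and substituting $\lambda = T\norm{\phi^{(1)}-\phi^{(2)}}_2^2$ yields the claimed Marcum-Q power expression; note that since $\lambda\to\infty$ the power indeed tends to $1$, so the test is consistent against any fixed alternative.

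The main obstacle will be the last two points rather than the Gaussian limit itself. First, because $H_1$ is a fixed alternative the noncentrality $\lambda_T = T\norm{\phi^{(1)}-\phi^{(2)}}_2^2$ diverges, so the assertion is not plain convergence in distribution to a fixed law but a ratio-to-one statement for the survival functions; making this precise requires uniform control of the Gaussian-approximation error against the growing mean (a Berry--Esseen- or characteristic-function-type bound, or equivalently a local-alternative reduction $\theta_2=\theta_1+O(1/\sqrt{T})$ followed by a limiting argument). Second, confirming that the whitened noncentrality reduces to the plain $\ell_2$ norm $T\norm{\phi^{(1)}-\phi^{(2)}}_2^2$, rather than to a general quadratic form $T\,h(\theta_2)^\intercal \Sigma_h^{-1} h(\theta_2)$, depends on the specific algebraic structure of the mixing model and must be checked through the closed-form computations in Appendices~\ref{all_asym} and~\ref{explicitGS}.
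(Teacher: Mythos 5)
Your proposal has the right mean-shift intuition, but it takes a genuinely different --- and substantially harder --- route than the paper. The paper never analyzes the restricted score under $H_1$ at all: it pivots to the Generalized Wald statistic $\Hat{GW}_T = h(\theta)^\intercal\bigl(H(\theta)B_T^{-1}(\theta)A_T(\theta)B_T^{-1}(\theta)H(\theta)^\intercal\bigr)^{-1}h(\theta)\big|_{\theta=\Tilde{\theta}_{QMLE}}$, evaluated at the \emph{unrestricted} QMLE, and invokes the asymptotic equivalence $\Hat{GS}_T - \Hat{GW}_T \overset{p}{\to} 0$ under both hypotheses (citing 2c.4(xiv) in Rao and Engle's Theorem 1). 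Since $h$ is linear, Lemma~\ref{asym_qmle}(ii) immediately gives $\sqrt{T}\bigl(h(\Tilde{\theta}_{QMLE})-h(\theta_0)\bigr)\overset{d}{\to}N\bigl(0,HB^{-1}(\theta_0)A(\theta_0)B^{-1}(\theta_0)H^\intercal\bigr)$ with $h(\theta_0)=\phi^{(1)}-\phi^{(2)}\neq 0$ under $H_1$, so the noncentral $\chi^2_r$ structure is read off directly from the quadratic form, and the noncentrality computation plus the Marcum-Q identity finish the proof. What this detour buys is precisely the avoidance of your main technical obstacle: under a fixed alternative the null-restricted QMLE converges to the pseudo-true value $\bar\theta=\arg\max_{\theta\in\Theta_0}\EE[\ell_1(\theta|\cH_T)]$, but Lemma~\ref{asym_qmle}(iii) provides a CLT for the score only at $\theta_0=\theta_2$ (the full-model projection), not at $\bar\theta$; at $\bar\theta$ the population score is nonzero in the constrained directions, so $S_T(\Hat{\theta}_{QMLE})/\sqrt{T}$ carries a drift diverging at rate $\sqrt{T}$, and identifying that drift with $\sqrt{T}\,h(\theta_2)$ ``mapped through $H$ and $B$'' requires a population-level expansion linking $\bar\theta$ and $\theta_2$ --- in effect you would be re-deriving the GS--GW equivalence that the paper simply cites.

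Two further remarks. First, the two obstacles you flag at the end are real, but they are shared with (and largely glossed over by) the paper itself: the paper asserts the noncentrality $T\|\phi^{(1)}-\phi^{(2)}\|_2^2$ from $\theta_0^\intercal H^\intercal H\theta_0=\|\phi^{(1)}-\phi^{(2)}\|_2^2$ without verifying that the whitening matrix $\bigl(HB^{-1}AB^{-1}H^\intercal\bigr)^{-1}$ reduces to the identity, and it handles the diverging-noncentrality issue only through the ratio-of-survival-functions formulation in the theorem statement. Second, if you pursue your route, the cleanest repair is to adopt the Wald-equivalence shortcut for the distributional claim and reserve the direct score analysis for the consistency statement ($\Hat{GS}_T\to\infty$), which is exactly how the paper organizes its two arguments.
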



\begin{figure}[!htp]
\centering
\includegraphics[scale=0.35]{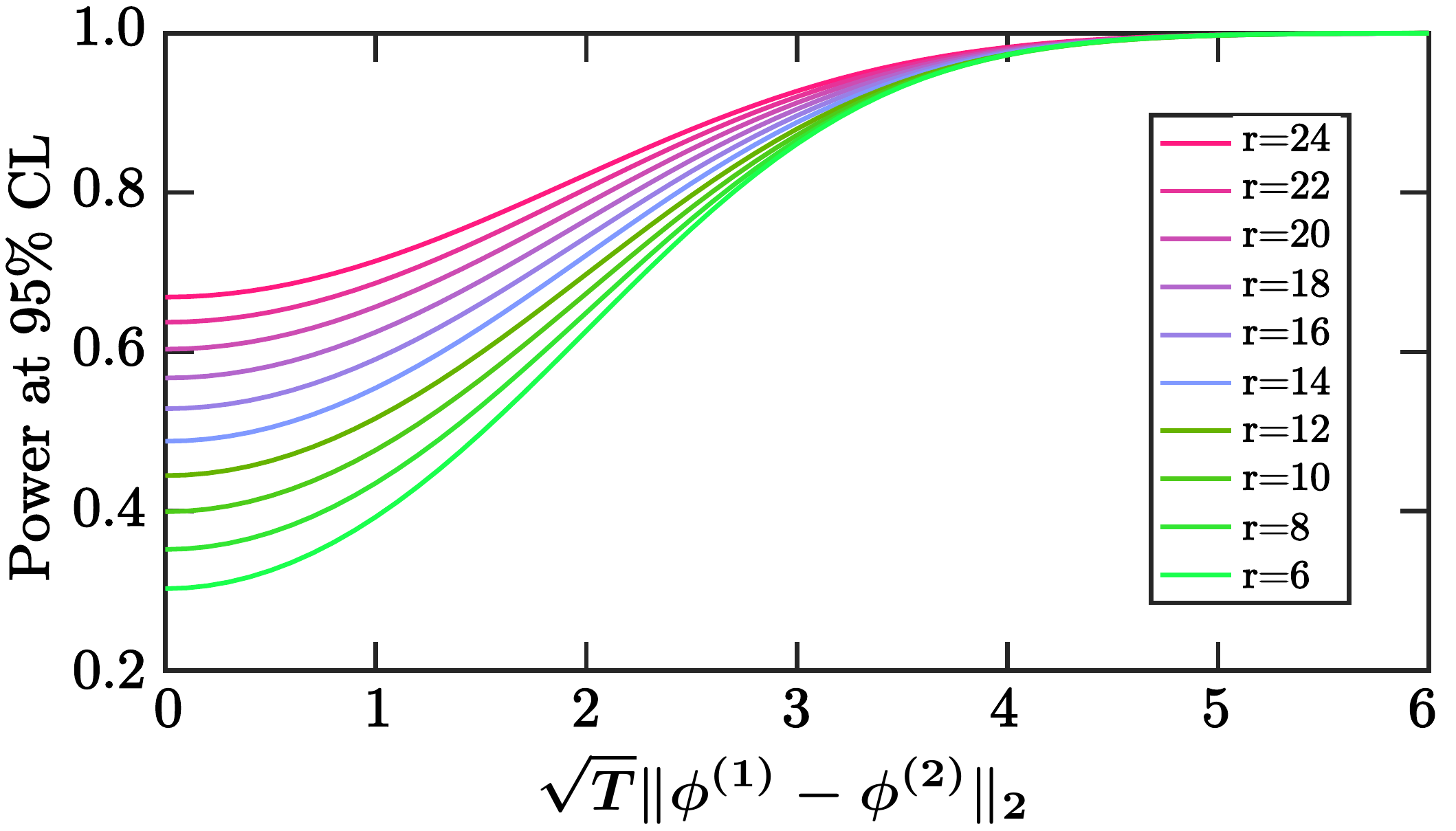}
 \vspace{-.1in}
\caption{Illustration of asymptotic power of GS test.}\label{fig_power}
\end{figure}
The asymptotic power function with the critical value chosen to be the upper 95\% quantile of the null distribution is shown in Figure~\ref{fig_power}. $Q_{M}(a,b) \rightarrow 1$ as $a \rightarrow \infty$, indicating our proposed test is consistent. See Appendix \ref{specialfunc} for more on $Q_{M}(a,b)$.

   

   

\section{Numerical experiments}\label{experiment}

In this section, we present numerical simulation to (1) validate the asymptotic property of our method by three simulation experiments; (2) demonstrate the GOF test for synthetic and real data.

\subsection{Validation of asymptotic properties}\label{expsec:validation}
To validate Theorems \ref{asym_null} and \ref{asym_power} presented in Section~\ref{theoreticalresult}, we conduct three simulation experiments on a synthetic data set. We repeat our experiments on five sub-data sets generated from Hawkes process defined in \eqref{eq:hawkes} with 1,000 sequences, where $\mu = 20$ and an exponential triggering function $\phi(t - t_i) =  \alpha e^{- 10 (t - t_i)}, t_i < t$ is adopted; $\alpha$ in each sub-data set is from $\{1.25,1.5,1.75, \dots, 3.75\}$.

\begin{figure}[!htp]
\centering
\includegraphics[scale=0.3]{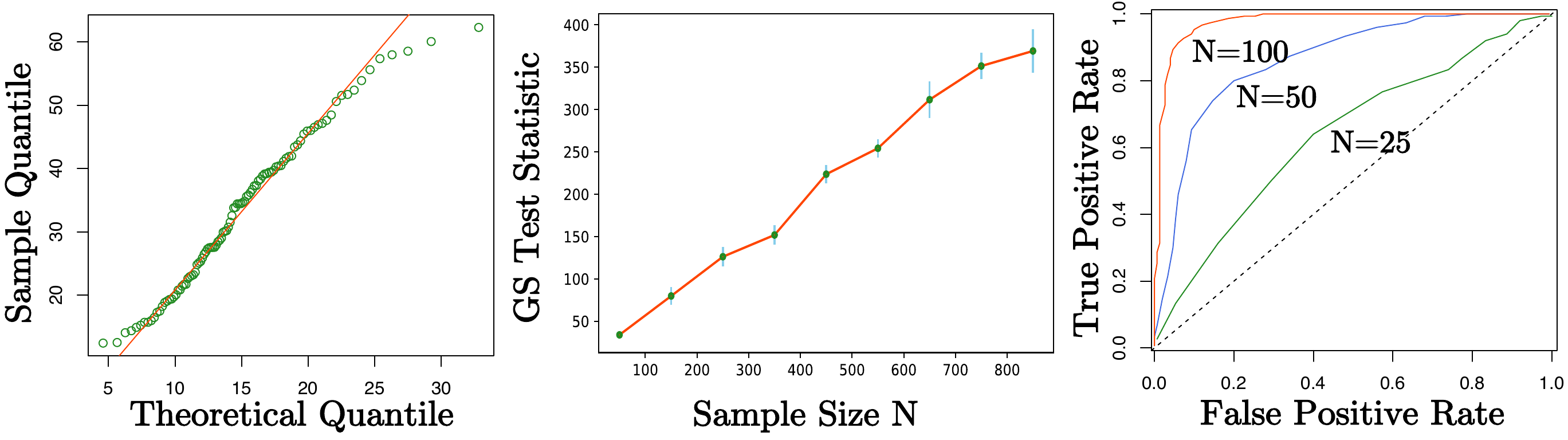}
     \vspace{-.2in}
    \caption{Simulation results: Left (a): quantiles of calculated GS statistics against theoretical quantiles of $\chi^2_{n_0}$ distribution under $H_0$; Middle (b): mean and variance of GS statistics with increasing $N$ under $H_1$; Right (c): ROC curve for different $N$.}\label{exp1}
\end{figure}

The Q-Q plot in Figure~\ref{exp1} (a) shows that the GS statistic follows the $\chi^2$ distribution, which is consisent with Theorem \ref{asym_null};
Figure~\ref{exp1} (b) visualizes the mean (red line) and the error bar (green bars) of each testing point for the GS statistics over different sample size $N$. Clearly, the GS statistics tend to be linear in sample size under $H_1$, which matches the theoretical results shown in our power study in Theorem \ref{asym_power} and shows that our asymptotic distribution analysis is reasonably accurate. The ROC Curve in Figure~\ref{exp1} (c) 
shows that the GS statistics has good performance when $N = 100$ (AUC is approximately 1); 
We choose $K$ to be $20, 5, 150$ for three experiments, respectively. Details on testing procedure can be found in Appendix \ref{add_exp}.

In short, we have confirmed 
(a) the $\chi^2$ null distribution; 
(b) the score is linear in sample size under $H_1$;
(c) the consistency of the proposed test.
We also conduct similar experiments for power triggering functions to validate our method is model free. Results are deferred to Figure~\ref{exp1_2} in Appendix~\ref{add_exp} due to space limitation. 

\subsection{Effects of number of Bins $n_0$}\label{expsec:diff_n0} 
We use exponential synthetic data sequence $D_1$ and $D_2$ with $\mu_1=\mu_2=20$, $\beta_1 = \beta_2 = 10$, $\alpha_1 = \alpha_2 = 1.5$ under $H_0$ and $\alpha_1 = 1.5, \alpha_2 = 5$ under $H_1$. The histogram estimate under $H_0$ is given in Figure~\ref{exp1_diffn0}. We perform GS test (confidence level $95\%$) under $H_0$ and $H_1$ 100 times for each $n_0$ and report Type I \& II errors in Table~\ref{table:exp1_diffn0}.

\begin{figure}[!htp]
  \centering
  \subfigure{\includegraphics[scale=0.25]{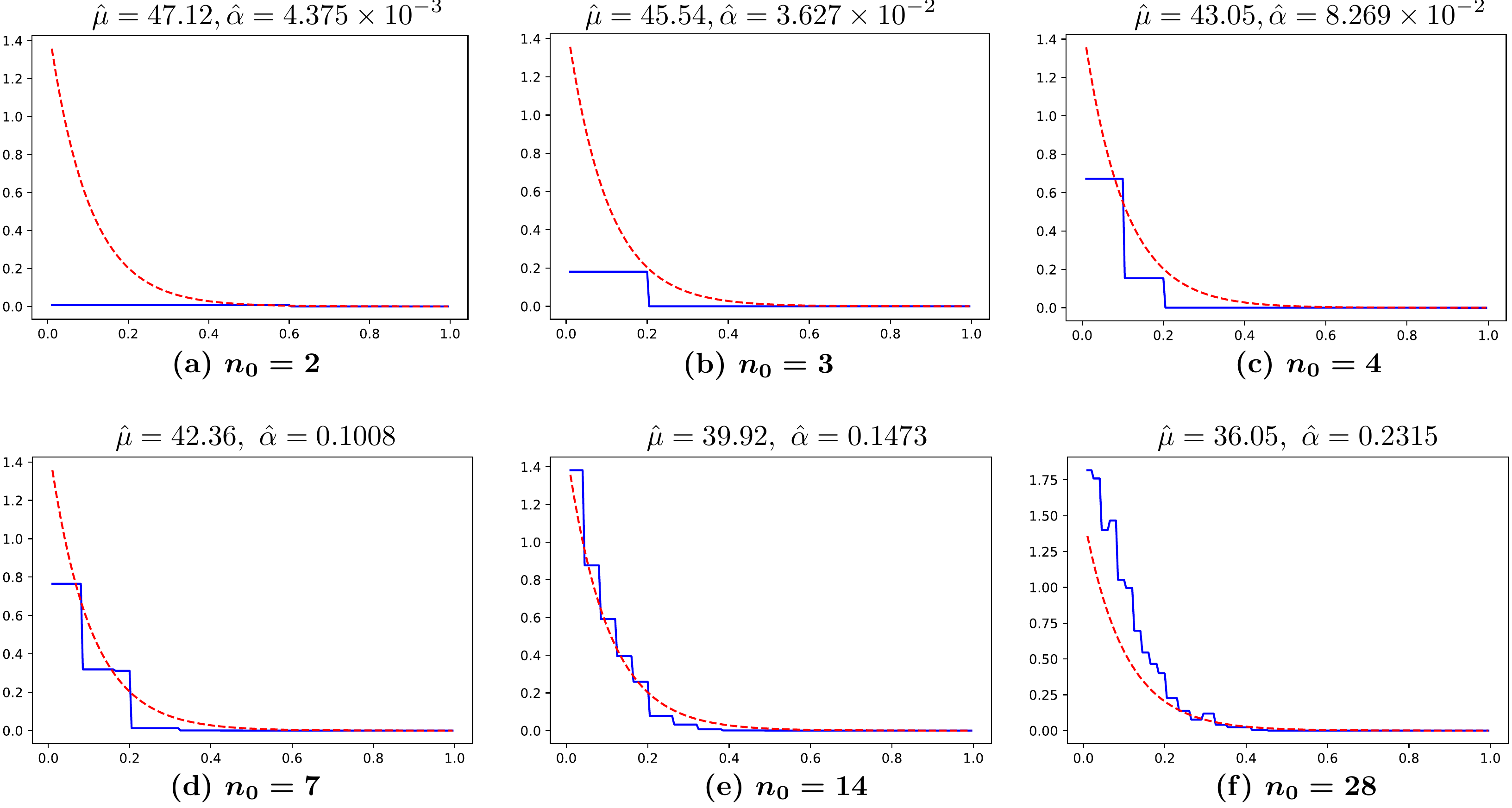}}  
   \vspace{-.2in}
\caption{Histogram estimation of exponential kernel with $\mu = \mu_1+\mu_2 = 40$ and $\alpha=0.15$ with different $n_0$. The red dashed line is ground-truth $\alpha e^{-\beta t}$ and the blue solid line is the histogram estimate. The bottom middle panel ($n_0=14$) is the most accurate one.}\label{exp1_diffn0}
\end{figure}

From Figure~\ref{exp1_diffn0}, we can observe that with too many bins, the histogram will overfit the data (panel (f)), whereas with fewer bins it underfits (panels (a)$\sim$(d)). However,  Table~\ref{table:exp1_diffn0} shows that $n_0=3$ is most powerful in capturing the difference in triggering function. By comparing panel (a) and (b) in Figure~\ref{exp1_diffn0}, even though underfitting still exists, it captures the triggering function, which seems to be sufficient for our setting.

\begin{table}[!htp]
\vspace{-0.1in}
\caption{Empirical Type I and Type II error (over 100 trials) for different number of Bins.}\label{table:exp1_diffn0}
 \vspace{-.1in}
\begin{center}
\begin{small}
\begin{sc}
\resizebox{0.5\textwidth}{!}{%
\begin{tabular}{lcccccccccr}
\toprule[1pt]\midrule[0.3pt]
Number of Bins & 2 & 3 & 4 & 7 & 14  & 28  \\ 
\cmidrule(l){2-7}
Type I error   &    0.04  &  0.05  &  0.06  &  0.02  &  0.03   & 0.02\\ 
Type II error &    0.59  &  \textbf{0.09}  &  0.19  &  0.34  &  0.79  &  0.83\\ 
\midrule[0.3pt]\bottomrule[1pt]
\end{tabular}
}
\end{sc}
\end{small}
\end{center}
\end{table}




\vspace{-.05in}
\subsection{Comparison with existing methods}\label{expsec:comparison}
The basic idea of existing GOF test due to \citet{ogata1988statistical} is to (i) transform the original process to a residual process by keeping point $t_i$ with probability $\hat \mu/\hat \lambda(t_i|\cH_{t_i})$; (ii) test if the residual is a homogeneous Poisson process with rate $\hat \mu$. Commonly used homogeneity test statistic is Ripley’s $K$ function \citep{ripley_1976} and we use $\hat K(t) = \sum_{i=1}^N \sum_{j\not =i} \mathbf{1}_{\{|t_j - t_i|\leq t\}}/\hat \mu N$ as its estimate.

We apply both tests to exponential synthetic data with $\beta=10$. We still use histogram estimation to estimate the conditional intensity. We calculate the GS statistics with $N=50, K=100$ and the average of $\hat K(t)$ over $L = 100$ sequences for time span $t \in \{1,\dots,10\}$ but only report $t=1,10$ cases since the difference is not large when $t$ doesn't change a lot. The rest is plotted in Figure~\ref{exp1_comparison} in Appendix~\ref{add_exp} due to space limitation.

\begin{figure}[!htp]
  \centering
  \subfigure{\includegraphics[width=\linewidth]{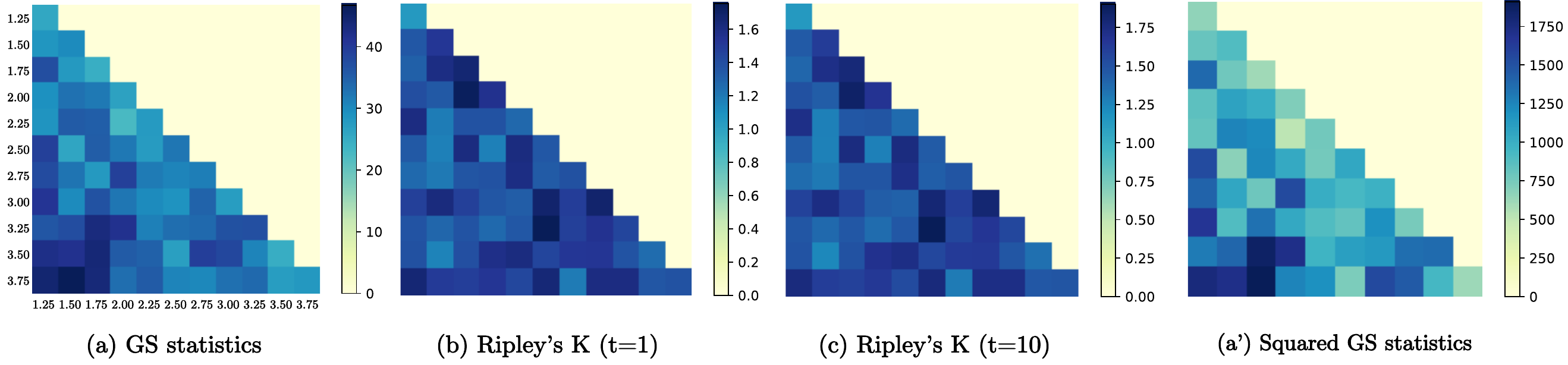}}
  \vspace{-.2in}
\caption{Heat map of (a) GS statistics, (b) $\hat K(1)$, (c) $\hat K(10)$ and (a') Squared GS statistics. For each pixel, the data sequence $D_1$ and $D_2$ are exponential synthetic data with $\alpha_1$ and $\alpha_2$ specified by the $x$-axis and the $y$-axis in (a). Squared GS statistics makes the gradual changing pattern more obvious.}\label{exp1_3}
\end{figure}

Figure~\ref{exp1_3} visualizes the GS statistics and $\hat K(t)$ when $D_1$ and $D_2$ are generated according to different $\alpha$'s, and show our method has more power in detecting the subtle difference in triggering part over existing methods. This is evident as in (a), the colors of the diagonal pixels are lighter whereas the colors of pixels on the bottom left are darker. This gradual changing pattern shows that GS statistic is larger when two generating distributions (i.e. $\alpha$'s) are further away whereas is smaller when those two distributions are closer, i.e. our proposed test can detect the subtle difference in triggering function accurately. However, in (b) and (c) we do not observe this gradual changing pattern, indicating Ripley’s K function values are approximately the same when the true data generation mechanisms of two data sequences vary within a small set. This is because background intensity dominates the conditional intensity and most of the events comes from the background. Thus, testing of whole intensity will fail to detect the subtle triggering function difference.

\subsection{Demonstration for model comparison}\label{expsec:GOF}

We perform our proposed test procedure on various synthetic and real data sets to compare four commonly used models. For synthetic experiments, we generate $5,000$ sequences for each data sets, which come from the Hawkes process ($\mu=10$) defined in \eqref{eq:hawkes} with different types of triggering functions: 
(a) exponential (\texttt{Exp}): $\phi(t - t_i) = e^{-3(t-t_i)}$;
(b) Matern kernel (\texttt{Matern}): 
$\phi(t - t_i) = 0.2 \times C_{0.2, 2}(t-t_i)$, where $C_{\rho, \nu}(d)=\sigma^{2} (2^{1-\nu}) /\Gamma(\nu) (\sqrt{2 \nu} d / \rho)^{\nu} K_{\nu}(\sqrt{2 \nu} d / \rho)$, where $\Gamma(\cdot)$ is the gamma function, $K_{\nu }(\cdot)$ is the modified Bessel function of the second kind.
For real data experiments, we select a wide range of real data sets including:
(c) MIMIC-III \citep{mimiciii} (\texttt{MIMIC}): 2,246 sequences with average sequence length 4.09;
(d) MemeTracker \citep{Leskovec_2009} (\texttt{MEME}): randomly-picked 5,000 sequences with average sequence length 24.41. 
There are 2,500 sequences in (a), (b), (d), and 1,746 sequences in (c) are used for fitting the model and generating new sample sequences. The rest serves as testing data to calculate our GS statistics.

The models we are testing/comparing include 
(1) exponential triggering function fitted by gradient descent (\texttt{Exp GD}); 
(2) histogram estimation of triggering function fitted by EM algorithm (\texttt{Hist EM}) \citep{marsan2008extending,fox2016spatially};
(3) Long Short Term Memory (\texttt{LSTM}) \citep{doi:10.1162/neco.1997.9.8.1735}; 
(4) Neural Hawkes Process (\texttt{NHP}) \citep{mei2017neural};
(5) Homogeneous Poisson process with random average intensity (\texttt{Random}) as sanity check.

\begin{table}[!htp]
\vspace{-0.1in}
\caption{GS statistic and Log-Likelihood;  lower GS value is better, higher likelihood is better.}\label{exp2}
\vspace{-0.1in}
\begin{center}
\begin{small}
\begin{sc}
\resizebox{0.5\textwidth}{!}{%
\begin{tabular}{lcccccccccr}
\toprule[1pt]\midrule[0.3pt]
& \multicolumn{5}{c}{\textbf{GS statistic}}& \multicolumn{3}{c}{\textbf{Log-Likelihood}} \\ 
Data & Exp GD & Hist EM & LSTM & NHP & Random  & Exp GD & Hist EM & NHP \\ 
\cmidrule(l){2-6}
\cmidrule(l){7-9} 
Exp   &  18.25 &\textbf{11.63}& 88.54& 14.83& 31.78 &   \textbf{21.27} & 21.10  & 20.03\\ 
Matern &  21.01& \textbf{18.40}& 81.37& 21.86& 26.11  & 19.09 &  \textbf{19.49}   & 14.91\\ 
MIMIC &  29.52& 27.90& 41.34& \textbf{25.24}& 31.04  &  \textbf{10.46} & 8.605  & 8.973 \\ 
MEME & 36.92& 34.29& 56.04& \textbf{29.98}& 39.37 &  69.51 & 62.66  & \textbf{73.15} \\ 
\midrule[0.3pt]\bottomrule[1pt]
\end{tabular}
}
\end{sc}
\end{small}
\end{center}
\end{table}

We follow the exact testing procedure in Algorithm~\ref{algo_goodness} with $N=200$, $K = 5$; we choose $n_0 = 15$ for \texttt{Exp} and \texttt{Matern} data and $n_0 = 13$ for \texttt{MIMIC} and \texttt{MEME} data. We report the mean of scores and the likelihood of fitting the model in Table \ref{exp2}. 
We observe that our proposed GOF test can differentiate models under different settings. In particular, the GS statistics can be used as a ranking criterion.
More specifically, 
the parametric models \texttt{Exp GD} and \texttt{Hist EM} achieve lower scores (better performance) on synthetic data sets comparing to \texttt{NHP} and \texttt{LSTM}, since the parametric assumptions of the parametric models (e.g., the additivity in triggering effects) are consistent with the Hawkes process used in generating synthetic data. 
In the contrast, \texttt{NHP} performs better on real data sets, including \texttt{MIMIC} and \texttt{MEME}, where dynamics between events are more complex and difficult to be captured using parametric models.
We also present the corresponding likelihood in Table~\ref{exp2}, which is commonly used to measure how well the data are fitted by the model (higher likelihood the better data is fitted). It shows that the likelihood result generally agrees with our GS statistics. Moreover, we also show that as a deterministic time series model, \texttt{LSTM} is difficult to compete with other baselines.

We should mention \texttt{Exp} data and \texttt{Exp GD} method case in particular, where the model is correctly specified. We use GD to maximize the likelihood to obtain MLE of the parameters. We observe that the estimates are further away from ground-truth while the likelihood keeps growing larger (see Figure~\ref{fig:overfitting} in Appendix~\ref{add_exp}). This means overfitting occurs and therefore likelihood may be a questionable model comparison metric.

We next show that our proposed test can select the best model. We use the ground-truth to generate the "fitted" sequence, since it is hard to learn the parameters correctly (potentially due to the overly short sequences), and compare it with \texttt{Hist EM}. We adopt the same experimental setting with the first row in Table 2 (\texttt{Exp} data) and report the result in Table~\ref{wrap-tab:1}.

\begin{table}[!htp]
\vspace{-0.1in}
\caption{Comparison of ground truth and \texttt{Hist EM} on \texttt{Exp} data. The GS statistic of \texttt{Hist EM} is different from that in Table~\ref{exp2} since we use different synthetic data.}\label{wrap-tab:1}
\vspace{-0.2in}
\begin{center}
\begin{small}
\begin{sc}
\resizebox{0.5\textwidth}{!}{%
\begin{tabular}{ccc}\\
\begin{small}
\begin{sc}
\resizebox{0.4\textwidth}{!}{%
\begin{tabular}{lcccccccccr}
\toprule[1pt]\midrule[0.3pt]
Method & GS statistic & log-likelihood \\ 
\cmidrule(l){2-3}
Ground truth  &    \textbf{13.24}  & 21.64 \\ 
\texttt{Hist EM} &    17.38  &  \textbf{21.65}  \\ 
\midrule[0.3pt]\bottomrule[1pt]
\end{tabular}
}
\end{sc}
\end{small}
\end{tabular}
}
\end{sc}
\end{small}
\end{center}
\end{table}

From this table, we can see that log-likelihood cannot differentiate those two methods and is even misleading, whereas our proposed GS statistic suggests the ground truth is a lot better than the \texttt{Hist EM} method. Together with the numerical results in the past experiments, we demonstrate that our proposed GOF test can select the best model in the sense that how well the model captures the self-exciting part in the data.

\paragraph{Goodness-of-fit for 911 call data.}
\label{police data}



To demonstrate the use of our test statistic as a diagnosis tool for the GOF of generative models, we test on 911 call data in 2017 provided by the Atlanta Police. The Atlanta Police Department divides its operation region into 78 beats, so we use this to partition the spatial region and consider a non-homogeneous point process generates sequences in each beat.

We first consider police events data in each beats in one day as a sequence, and for each beat fit generative model using \texttt{NHP} and \texttt{Exp GD}. 
Then we calculate the value of the test statistic for each beat. The experiment configurations are as follows: $N=20$, $K = 1$, $n_0 = 12$.
The results are presented in Figure~\ref{police}.

\begin{figure}[H]
\centering
\includegraphics[scale=0.4]{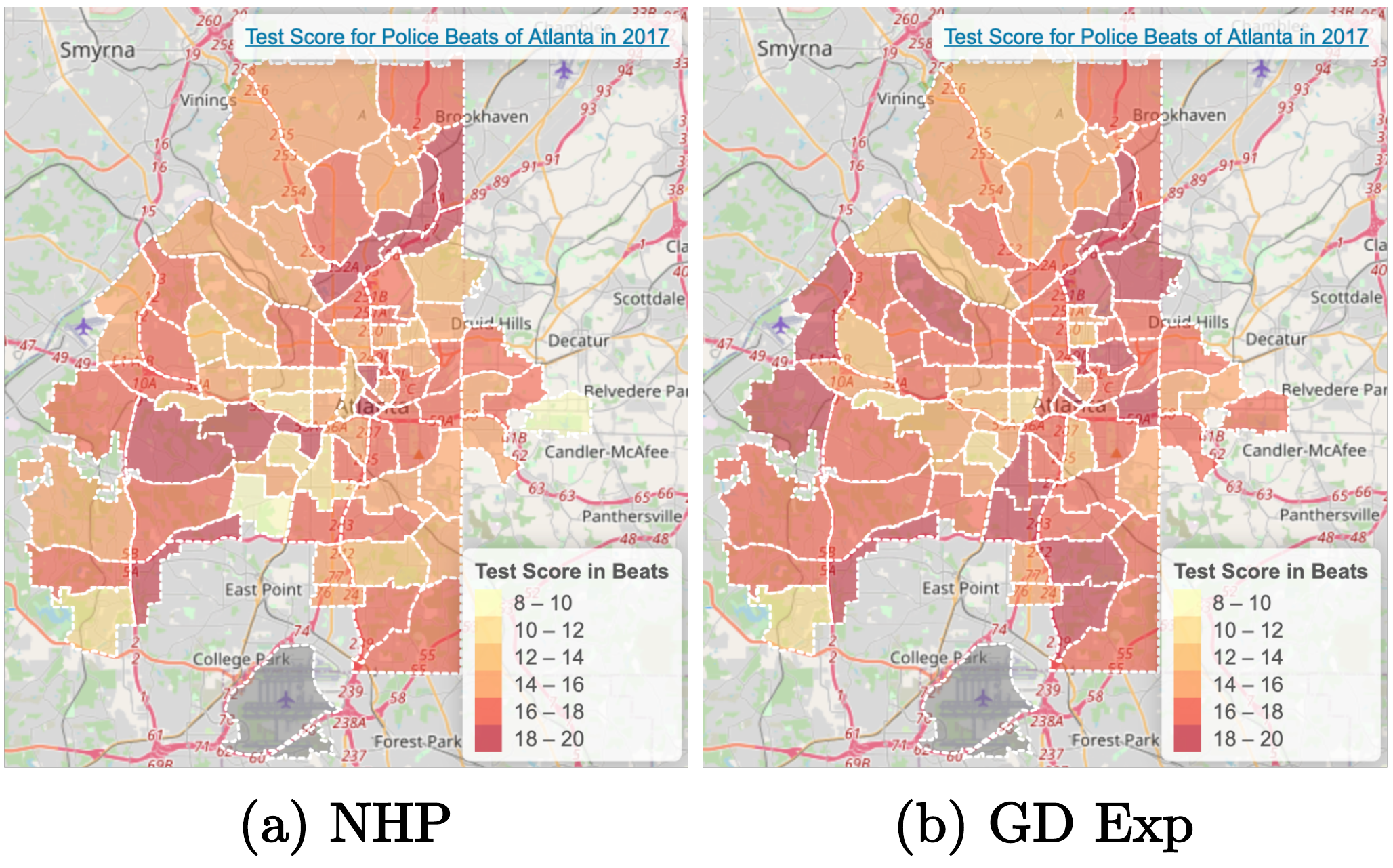}
 \vspace{-.15in}
\caption{Goodness-of-fit test for Atlanta 911 call data: (a) for \texttt{NHP}; (b) for \texttt{Exp GD}. Each polygon in the map represents a police beat in Atlanta. The color depth represents the level of the test score. Lighter color: smaller discrepancy between the generated data and the real data. Overall speaking, we can see \texttt{NHP} has better GOF than \texttt{Exp GD}, especially in populated area. }\label{police}
\end{figure}

Clearly, the generative model has different GOF in each beat. Also, the two generative models have different patterns in their GOF over space. Note that we do not know the ground-truth. This example demonstrates that our tools provide a convenient and flexible diagnosis tool for the GOF for generative models in practice.



\section{Acknowledgement}

The work is supported by the NSF CAREER Award CCF-1650913, and NSF CMMI-2015787, DMS-1938106, DMS-1830210. The authors would like to thank the Editor and the anonymous referees for the thoughtful comments and suggestions, which led to an improvement of the presentation. 



\newpage

\normalem
\bibliographystyle{apalike}
\bibliography{ref}

\begin{thebibliography}{}

\bibitem[Abdel-Aty, 1954]{10.2307/2332731}
Abdel-Aty, S.~H. (1954).
\newblock Approximate formulae for the percentage points and the probability
  integral of the non-central chi-squared distribution.
\newblock {\em Biometrika}, 41(3/4):538--540.

\bibitem[Akaike, 1998]{Akaike1998}
Akaike, H. (1998).
\newblock {\em Information Theory and an Extension of the Maximum Likelihood
  Principle}, pages 199--213.
\newblock Springer New York, New York, NY.

\bibitem[Baringhaus and Franz, 2004]{baringhaus2004new}
Baringhaus, L. and Franz, C. (2004).
\newblock On a new multivariate two-sample test.
\newblock {\em Journal of multivariate analysis}, 88(1):190--206.

\bibitem[Bartle, 1976]{bartle1976elements}
Bartle, R.~G. (1976).
\newblock {\em The elements of real analysis}.
\newblock Wiley.

\bibitem[Boos, 1992]{boos1992generalized}
Boos, D.~D. (1992).
\newblock On generalized score tests.
\newblock {\em The American Statistician}, 46(4):327--333.

\bibitem[Bounliphone et~al., 2015]{bounliphone2015test}
Bounliphone, W., Belilovsky, E., Blaschko, M.~B., Antonoglou, I., and Gretton,
  A. (2015).
\newblock A test of relative similarity for model selection in generative
  models.
\newblock {\em arXiv preprint arXiv:1511.04581}.

\bibitem[Bray and Schoenberg, 2013]{bray2013assessment}
Bray, A. and Schoenberg, F.~P. (2013).
\newblock Assessment of point process models for earthquake forecasting.
\newblock {\em Statistical science}, pages 510--520.

\bibitem[Chen et~al., 2018]{chen2018performance}
Chen, J., Hawkes, A., Scalas, E., and Trinh, M. (2018).
\newblock Performance of information criteria for selection of hawkes process
  models of financial data.
\newblock {\em Quantitative Finance}, 18(2):225--235.

\bibitem[Chwialkowski et~al., 2016]{chwialkowski2016kernel}
Chwialkowski, K., Strathmann, H., and Gretton, A. (2016).
\newblock A kernel test of goodness of fit.
\newblock JMLR: Workshop and Conference Proceedings.

\bibitem[Engle, 1984]{engle1984wald}
Engle, R.~F. (1984).
\newblock Wald, likelihood ratio, and lagrange multiplier tests in
  econometrics.
\newblock {\em Handbook of econometrics}, 2:775--826.

\bibitem[Fox et~al., 2016]{fox2016spatially}
Fox, E.~W., Schoenberg, F.~P., and Gordon, J.~S. (2016).
\newblock Spatially inhomogeneous background rate estimators and uncertainty
  quantification for nonparametric hawkes point process models of earthquake
  occurrences.
\newblock {\em The Annals of Applied Statistics}, 10(3):1725--1756.

\bibitem[Gretton et~al., 2012]{gretton2012kernel}
Gretton, A., Borgwardt, K.~M., Rasch, M.~J., Sch{\"o}lkopf, B., and Smola, A.
  (2012).
\newblock A kernel two-sample test.
\newblock {\em Journal of Machine Learning Research}, 13(Mar):723--773.

\bibitem[Harchaoui et~al., 2013]{harchaoui2013kernel}
Harchaoui, Z., Bach, F., Cappe, O., and Moulines, E. (2013).
\newblock Kernel-based methods for hypothesis testing: A unified view.
\newblock {\em IEEE Signal Processing Magazine}, 30(4):87--97.

\bibitem[Hawkes, 1971a]{hawkes1971point}
Hawkes, A.~G. (1971a).
\newblock Point spectra of some mutually exciting point processes.
\newblock {\em Journal of the Royal Statistical Society: Series B
  (Methodological)}, 33(3):438--443.

\bibitem[Hawkes, 1971b]{hawkes1971spectra}
Hawkes, A.~G. (1971b).
\newblock Spectra of some self-exciting and mutually exciting point processes.
\newblock {\em Biometrika}, 58(1):83--90.

\bibitem[Hawkes and Oakes, 1974]{hawkes1974cluster}
Hawkes, A.~G. and Oakes, D. (1974).
\newblock A cluster process representation of a self-exciting process.
\newblock {\em Journal of Applied Probability}, 11(3):493--503.

\bibitem[Hochreiter and Schmidhuber, 1997]{doi:10.1162/neco.1997.9.8.1735}
Hochreiter, S. and Schmidhuber, J. (1997).
\newblock Long short-term memory.
\newblock {\em Neural Computation}, 9(8):1735--1780.

\bibitem[Jennrich, 1969]{jennrich1969asymptotic}
Jennrich, R.~I. (1969).
\newblock Asymptotic properties of non-linear least squares estimators.
\newblock {\em The Annals of Mathematical Statistics}, 40(2):633--643.

\bibitem[Johnson et~al., 2016]{mimiciii}
Johnson, A.~E., Pollard, T.~J., Shen, L., Li-wei, H.~L., Feng, M., Ghassemi,
  M., Moody, B., Szolovits, P., Celi, L.~A., and Mark, R.~G. (2016).
\newblock Mimic-iii, a freely accessible critical care database.
\newblock {\em Scientific data}, 3:160035.

\bibitem[Laub et~al., 2015]{laub2015hawkes}
Laub, P.~J., Taimre, T., and Pollett, P.~K. (2015).
\newblock Hawkes processes.
\newblock {\em arXiv preprint arXiv:1507.02822}.

\bibitem[Leskovec et~al., 2009]{Leskovec_2009}
Leskovec, J., Backstrom, L., and Kleinberg, J. (2009).
\newblock Meme-tracking and the dynamics of the news cycle.
\newblock {\em Proceedings of the 15th ACM SIGKDD international conference on
  Knowledge discovery and data mining - KDD '09}.

\bibitem[Lewis et~al., 2012]{lewis2012self}
Lewis, E., Mohler, G., Brantingham, P.~J., and Bertozzi, A.~L. (2012).
\newblock Self-exciting point process models of civilian deaths in iraq.
\newblock {\em Security Journal}, 25(3):244--264.

\bibitem[Marsan and Lengline, 2008]{marsan2008extending}
Marsan, D. and Lengline, O. (2008).
\newblock Extending earthquakes' reach through cascading.
\newblock {\em Science}, 319(5866):1076--1079.

\bibitem[Mei and Eisner, 2017]{mei2017neural}
Mei, H. and Eisner, J.~M. (2017).
\newblock The neural hawkes process: A neurally self-modulating multivariate
  point process.
\newblock In {\em Advances in Neural Information Processing Systems}, pages
  6754--6764.

\bibitem[Meyer and Held, 2014]{meyer2014power}
Meyer, S. and Held, L. (2014).
\newblock Power-law models for infectious disease spread.
\newblock {\em The Annals of Applied Statistics}, 8(3):1612--1639.

\bibitem[Mohler et~al., 2011]{doi:10.1198/jasa.2011.ap09546}
Mohler, G.~O., Short, M.~B., Brantingham, P.~J., Schoenberg, F.~P., and Tita,
  G.~E. (2011).
\newblock Self-exciting point process modeling of crime.
\newblock {\em Journal of the American Statistical Association},
  106(493):100--108.

\bibitem[Ogata, 1978]{ogata1978asymptotic}
Ogata, Y. (1978).
\newblock The asymptotic behaviour of maximum likelihood estimators for
  stationary point processes.
\newblock {\em Annals of the Institute of Statistical Mathematics},
  30(1):243--261.

\bibitem[Ogata, 1988]{ogata1988statistical}
Ogata, Y. (1988).
\newblock Statistical models for earthquake occurrences and residual analysis
  for point processes.
\newblock {\em Journal of the American Statistical association}, 83(401):9--27.

\bibitem[Ogata, 1999]{ogata1999seismicity}
Ogata, Y. (1999).
\newblock Seismicity analysis through point-process modeling: A review.
\newblock In {\em Seismicity patterns, their statistical significance and
  physical meaning}, pages 471--507. Springer.

\bibitem[Peng et~al., 2005]{peng2005space}
Peng, R.~D., Schoenberg, F.~P., and Woods, J.~A. (2005).
\newblock A space--time conditional intensity model for evaluating a wildfire
  hazard index.
\newblock {\em Journal of the American Statistical Association},
  100(469):26--35.

\bibitem[Porter and White, 2012]{porter2012self}
Porter, M.~D. and White, G. (2012).
\newblock Self-exciting hurdle models for terrorist activity.
\newblock {\em The Annals of Applied Statistics}, 6(1):106--124.

\bibitem[Rao et~al., 1973]{rao1973linear}
Rao, C.~R., Rao, C.~R., Statistiker, M., Rao, C.~R., and Rao, C.~R. (1973).
\newblock {\em Linear statistical inference and its applications}, volume~2.
\newblock Wiley New York.

\bibitem[Reinhart, 2018]{reinhart2018review}
Reinhart, A. (2018).
\newblock A review of self-exciting spatio-temporal point processes and their
  applications.
\newblock {\em Statistical Science}, 33(3):299--318.

\bibitem[Ripley, 1976]{ripley_1976}
Ripley, B.~D. (1976).
\newblock The second-order analysis of stationary point processes.
\newblock {\em Journal of Applied Probability}, 13(2):255–266.

\bibitem[Schoenberg, 2003]{schoenberg2003multidimensional}
Schoenberg, F.~P. (2003).
\newblock Multidimensional residual analysis of point process models for
  earthquake occurrences.
\newblock {\em Journal of the American Statistical Association},
  98(464):789--795.

\bibitem[Schoenberg, 2013]{schoenberg2013facilitated}
Schoenberg, F.~P. (2013).
\newblock Facilitated estimation of etas.
\newblock {\em Bulletin of the Seismological Society of America},
  103(1):601--605.

\bibitem[Schoenberg et~al., 2019]{schoenberg2019recursive}
Schoenberg, F.~P., Hoffmann, M., and Harrigan, R.~J. (2019).
\newblock A recursive point process model for infectious diseases.
\newblock {\em Annals of the Institute of Statistical Mathematics},
  71(5):1271--1287.

\bibitem[Schorlemmer et~al., 2007]{schorlemmer2007earthquake}
Schorlemmer, D., Gerstenberger, M., Wiemer, S., Jackson, D., and Rhoades, D.
  (2007).
\newblock Earthquake likelihood model testing.
\newblock {\em Seismological Research Letters}, 78(1):17--29.

\bibitem[Sun et~al., 2010]{sun2010monotonicity}
Sun, Y., Baricz, {\'A}., and Zhou, S. (2010).
\newblock On the monotonicity, log-concavity, and tight bounds of the
  generalized marcum and nuttall $ q $-functions.
\newblock {\em IEEE Transactions on Information Theory}, 56(3):1166--1186.

\bibitem[Sz{\'e}kely and Rizzo, 2004]{szekely2004testing}
Sz{\'e}kely, G.~J. and Rizzo, M.~L. (2004).
\newblock Testing for equal distributions in high dimension.
\newblock {\em InterStat}, 5(16.10):1249--1272.

\bibitem[Veen and Schoenberg, 2008]{veen2008estimation}
Veen, A. and Schoenberg, F.~P. (2008).
\newblock Estimation of space--time branching process models in seismology
  using an em--type algorithm.
\newblock {\em Journal of the American Statistical Association},
  103(482):614--624.

\bibitem[White, 1980]{10.2307/1913132}
White, H. (1980).
\newblock Nonlinear regression on cross-section data.
\newblock {\em Econometrica}, 48(3):721--746.

\bibitem[White, 1982]{white1982maximum}
White, H. (1982).
\newblock Maximum likelihood estimation of misspecified models.
\newblock {\em Econometrica: Journal of the Econometric Society}, pages 1--25.

\bibitem[Yang et~al., 2019]{pmlr-v89-yang19a}
Yang, J., Rao, V., and Neville, J. (2019).
\newblock A {S}tein–{P}apangelou goodness-of-fit test for point processes.
\newblock In {\em Proceedings of Machine Learning Research}, volume~89 of {\em
  Proceedings of Machine Learning Research}, pages 226--235. PMLR.

\bibitem[Zhuang, 2011]{zhuang2011next}
Zhuang, J. (2011).
\newblock Next-day earthquake forecasts for the japan region generated by the
  etas model.
\newblock {\em Earth, planets and space}, 63(3):207--216.

\bibitem[Zhuang et~al., 2002]{zhuang2002stochastic}
Zhuang, J., Ogata, Y., and Vere-Jones, D. (2002).
\newblock Stochastic declustering of space-time earthquake occurrences.
\newblock {\em Journal of the American Statistical Association},
  97(458):369--380.

\end{thebibliography}

\newpage
\onecolumn
\begin{appendices}

\section{Mixing of two Hawkes processes}\label{mixing}
We first present a useful lemma, which provides the proof for full model case (i.e. under $H_1$).  Another equivalent definition of conditional intensity $\lambda(t|\cH_t)$ for a counting process $\{N(t) : t \geq 0 \}$ with history
 $\cH_t \ (t \geq 0)$ is
\begin{align*}
    \mathbb{P}(N(t+h)-N(t)=m | \mathcal{H}_t)=\left\{\begin{array}{ll}
\lambda(t|\cH_t)h+o(h), & m=1 \\
o(h), & m>1 \\
1-\lambda(t|\cH_t)h+o(h), & m=0
\end{array}\right.
\end{align*}
We will make use of this definition to prove the following lemma.
\begin{lemma}\label{sum_intensity}\it
Suppose we have $n$ Hawkes processes $\{N_z(t) : t \geq 0 \} \ (z=1,2,\dots,n)$ with conditional intensity specified by \eqref{eq:hawkes}. Define the mixing to be $N(t)=\sum_{z=1}^n N_z(t)$. The conditional intensity of mixing of $n$ Hawkes processes is sum of those $n$ conditional intensities. That is,
$$\lambda(t |\cH_{t})=\sum_{z=1}^n \lambda_z(t|\cH_{z,t}),$$where $\cH_t=\cup_{z=1}^n \ \cH_{z,t}$.
\end{lemma}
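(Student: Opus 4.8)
The plan is to work directly from the infinitesimal characterization of the conditional intensity stated just above the lemma, which pins down $\lambda(t \mid \cH_t)$ through the first-order (in $h$) behavior of $\PP(N(t+h) - N(t) = m \mid \cH_t)$. Since the conditional intensity is uniquely determined by the probability that exactly one event occurs in $(t, t+h]$, it suffices to compute $\PP(N(t+h) - N(t) = 1 \mid \cH_t)$ and read off the coefficient of $h$; I will then check the $m=0$ and $m>1$ cases for consistency.

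First I would decompose the total increment as $N(t+h) - N(t) = \sum_{z=1}^n (N_z(t+h) - N_z(t))$. A single jump of the superposition occurs precisely when exactly one of the $n$ component processes jumps once on $(t,t+h]$ while all the others stay put, so
$$\PP(N(t+h)-N(t)=1\mid \cH_t)=\sum_{z=1}^n \PP\big(N_z \text{ jumps once and } N_{z'} \text{ no jump for } z'\neq z \mid \cH_t\big).$$

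The key step is the factorization of this joint increment probability. Because the mixed processes are independent and each $\lambda_z$ is a function of its own history $\cH_{z,t}$ only, conditioning on the combined history $\cH_t = \cup_z \cH_{z,t}$ leaves the individual increment laws unchanged and makes the component increments conditionally independent. This yields
$$\PP(N(t+h)-N(t)=1\mid \cH_t)=\sum_{z=1}^n \big(\lambda_z(t\mid\cH_{z,t})h+o(h)\big)\prod_{z'\neq z}\big(1-\lambda_{z'}(t\mid\cH_{z',t})h+o(h)\big).$$
Expanding each product as $1 + O(h)$ collapses every summand to $\lambda_z(t\mid\cH_{z,t})h + o(h)$, giving $\sum_{z=1}^n \lambda_z(t\mid\cH_{z,t})\,h + o(h)$ and hence $\lambda(t\mid\cH_t)=\sum_z \lambda_z(t\mid\cH_{z,t})$. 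The $m=0$ case follows from $\prod_z(1-\lambda_z h + o(h)) = 1 - \sum_z\lambda_z h + o(h)$, consistent with the same sum, and the $m>1$ case is $o(h)$ since it forces either a double jump in some component ($o(h)$) or simultaneous single jumps in at least two components (a product of two $O(h)$ terms).

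I expect the main obstacle to be the rigorous justification of the conditional factorization rather than the algebra. The subtle point is that one must invoke both the independence of the superposed processes and the measurability of each $\lambda_z$ with respect to $\cH_{z,t}$ alone, so that enlarging the conditioning from $\cH_{z,t}$ to the full history $\cH_t$ does not alter the infinitesimal increment probabilities; once this is in place, the remainder is routine $o(h)$ bookkeeping. An alternative that avoids stating this for general $n$ all at once is to prove the $n=2$ case and induct on $n$, but the direct argument above requires essentially the same work and immediately supplies the $\mu=\mu^{(1)}+\mu^{(2)}$ and additive-triggering structure used in Proposition~\ref{sum_2_hawkes}.
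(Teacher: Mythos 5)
Your proposal is correct and follows essentially the same route as the paper's proof: both work from the infinitesimal ($o(h)$) characterization of the conditional intensity, factor the joint increment probability over the independent components given their own histories, and then handle the three cases $m=1$, $m=0$, and $m>1$ identically. If anything, you are slightly more careful than the paper, which writes down the conditional factorization $\PP(N(t+h)-N(t)=m \mid \cH_t)=\sum \prod_i \PP(N_i(t+h)-N_i(t)=m_i \mid \cH_{i,t})$ without comment, whereas you correctly flag that this step is where independence and the $\cH_{z,t}$-measurability of each $\lambda_z$ must be invoked.
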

\begin{proof}[Proof of Lemma~\ref{sum_intensity}]

We prove by the definition of conditional intensity. For any non-negative integer $m\in\mathbb{Z}_+$, denote $ \textbf{\uwave{m}} = (m_1,\dots,m_n)$ and $M = \{\textbf{\uwave{m}}  \ |\ m_1+\dots+m_n=m, \  m_i\in\mathbb{Z}_+\}$,
\begin{equation*}
    \mathbb{P}(N(t+h)-N(t)=m | \mathcal{H}_t) = \sum_{\textbf{\uwave{m}} \in M}  \ \ \prod_{i=1}^n \mathbb{P}(N_i(t+h)-N_i(t)=m_i | \mathcal{H}_{i,t}).
\end{equation*}

\textbf{Case 1:} When $m>1$, it is easy to see $\mathbb{P}(N(t+h)-N(t)=m | \mathcal{H}_t)=o(h)$, since either there are at least two $m_i$'s $\geq1$ or at least one $m_i\geq2$.

\textbf{Case 2:} When $m=1$, there will be one and only one of all $m_i$'s taking value 1 and the rest will be all zeros. Thus, we have \begin{equation*}
\begin{split}
    &\mathbb{P}(N(t+h)-N(t)=1 | \mathcal{H}_t) \\
    =&~  \sum_{j=1}^n \mathbb{P}(N_j(t+h)-N_j(t)=1 | \mathcal{H}_{j,t})\prod_{i\not=j}^n \mathbb{P}(N_i(t+h)-N_i(t)=0 | \mathcal{H}_{i,t}) \\=& ~ \sum_{j=1}^n (\lambda_j(t|\cH_{j,t})h+\mathrm{o}(h))\prod_{i\not=j}^n (1-\lambda_i(t|\cH_{i,t})h+o(h))= \sum_{j=1}^n \lambda_j(t|\cH_{j,t})h+\mathrm{o}(h).
\end{split}
\end{equation*}

\textbf{Case 3:} When $m=0$, all $m_i$'s will be zeros and we will have \begin{equation*} \begin{split}\mathbb{P}(N(t+h)-N(t)
=0 | \mathcal{H}_t) =& \prod_{i=1}^n \mathbb{P}(N_i(t+h)-N_i(t)=0 | \mathcal{H}_{i,t}) \\= &\prod_{j=1}^n (1-\lambda_i(t|\cH_{i,t})h+o(h))=1 - \sum_{i=1}^n \lambda_j(t|\cH_{j,t})h+\mathrm{o}(h).
\end{split}\end{equation*}

Let $\lambda(t|\cH_{t})=\sum_{i=1}^n \lambda_i(t|\cH_{i,t}),$ and we will find out this is the conditional intensity for $N(t)$.
\end{proof}

\begin{proof}[Proof of Proposition~\ref{sum_2_hawkes}]
We can see under the alternative hypothesis, the result directly follows Lemma~\ref{sum_intensity}. Under null hypothesis, by Lemma \ref{sum_intensity}, it is easy to show $N(t)$ defined in Proposition~\ref{sum_2_hawkes} has intensity 
\begin{equation*} \begin{split}
   \lambda(t|\cH_{t})&=\mu^{(1)}+\mu^{(2)}+\int_{0}^{t} \phi(t-u) \mathrm{d}(N_1(u)+N_2(u))=\mu+\int_{0}^{t} \phi(t-u) \mathrm{d}N(u),
\end{split}\end{equation*}
where $\cH_t=\cH_{1,t}\cup  \cH_{2,t}$. 

By the definition of Hawkes Process in Section~\ref{background},  we can see the mixing of two Hawkes processes under $H_0$ is still a Hawkes process. We complete the proof.
\end{proof}

\section{A non-parametric estimation of the Quasi-conditional intensity}\label{PWHE}
\subsection{Probability Weighted Histogram Estimation under null hypothesis}

Here, we redefine the Quasi-parameter as $\theta = (\mu,\alpha^{(1)},g_1^{(1)},\dots,g_{n_0}^{(1)},\alpha^{(2)},g_1^{(2)},\dots,g_{n_0}^{(2)})$, where $\mu \triangleq \mu^{(1)}+\mu^{(2)}$. This is because we will estimate the triggering magnitude and the temporal triggering function separately.

The full model Quasi-parameter space is given by
$$\Theta = \Big\{\theta \ \big |\  \mu > 0 \ \text{and}\ \int_0^{\infty} \phi^{(z)}(u ) du =\int_0^{\infty} \alpha^{(z)} g^{(z)}(u ) du = \alpha^{(z)}<1 \ \ (z=1,2)\Big\}.$$ Under $H_0$, we have
$$\theta_0 \in \Theta_0 = \{\theta \in \Theta \ | \ \alpha^{(1)} = \alpha^{(2)}= \alpha \ \ \ \text{and} \ \ \ g_k^{(1)} = g_{k}^{(2)}=g_k \ (k=1,\dots,n_0)\}.$$

Denote $\cH_t=\cH_{1,t}\cup  \cH_{2,t}=\{t_1,\dots,t_{N}\}$. Define the branching structure as follows:
\begin{equation*} \begin{split}
    p_{i j}=\left\{\begin{array}{ll}
\text { probability event } i \text { is triggered by event } j, & i>j \\
\text { probability event } i \text { comes from background, } & i=j \\
\ 0, & i<j
\end{array}\right.
\end{split}\end{equation*}
Apparently, we want to estimate the Quasi-background intensity from background process only and Quasi-triggering function from the triggered events only. Instead of using a hard-threshold indicator, \citet{zhuang2002stochastic} used a stochastic declustering procedure to separate the background events from triggered ones by assigning each event a weight, or rather the probability that this event comes from background or is direct offspring from an individual ancestor. Then, we can use a probability weighted estimator to estimate Quasi-background intensity and Quasi-triggering function. The algorithm is as follows:

Assume we have estimated branching structure $p_{i i}^{(v)}$ at iteration $v$, then we can estimate the Quasi-background intensity as follows:
\begin{equation}\label{algo1_background}
    \mu^{(v)}=\frac{1}{T} \sum_{i=1}^N p_{i i}^{(v)}.
\end{equation}
For the Quasi-triggering component, as we assume $g$ to be a p.d.f., we can estimate the magnitude of triggering effect from triggered events only:\begin{equation}\label{algo1_trigger_mag}
    \alpha^{(v)} = 1 - \sum_{i=1}^N p_{ii}^{(v)}/N.
\end{equation}
For the temporal component in the Quasi-triggering function, for each bin (as we discretize in \eqref{piecewiseconst}), we estimate its parameter from those triggered events which falls into that bin, i.e.
\begin{equation}\label{algo1_trigger_time}
    g_{k}^{(v)}=\frac{\sum_{B_{k}} p_{i j}^{(v)}}{\Delta t_{k} \sum_{i=1}^{N} \sum_{j=1}^{i-1} p_{i j}^{(v)}},  \ (k=1, \ldots, n_{0}).
\end{equation}
After estimating the Quasi-conditional intensity function, we update the branching structure. More specifically, for $i>j$:\begin{align}
	    p_{i j}^{(v+1)}&=\PP(\text{i-th event is triggered by j-th event}|\cH_{t_i}) \label{algo1_trigger_prob}=\frac{ \alpha^{(v)}g^{(v)}\left(t_{i}-t_{j}\right) }{\mu^{(v)}+\sum_{j=1}^{i-1}  \alpha^{(v)}g^{(v)}\left(t_{i}-t_{j}\right) },\\
p_{i i}^{(v+1)}&=\PP(\text{i-th event comes from background}|\cH_{t_i}) \label{algo1_background_prob}=\frac{\mu^{(v)}}{\mu^{(v)}+\sum_{j=1}^{i-1}  \alpha^{(v)}g^{(v)}\left(t_{i}-t_{j}\right). }
\end{align}
We summarize the algorithm as follows:
\begin{algorithm}[H]
    \caption{Probability Weighted Histogram Estimation of Quasi-log-likelihood under $H_0$}\label{algo1}
    \begin{algorithmic}
	\STATE{\textbf{Initialize}: choose stopping critical value $\epsilon$ (e.g. $10^{-3}$), initialize $p_{ij}^{(0)}$ and set $p_{ij}^{(-1)} = \epsilon + p_{ij}^{(0)}$ and iteration index $v=0$.}
	\WHILE{$\max _{i > j}\left|p_{i j}^{(v)}-p_{i j}^{(v-1)}\right|<\varepsilon$}
	\STATE{1. Estimate Quasi-background rate $\mu$ as in \eqref{algo1_background}.}
	\STATE{2. Estimate Quasi-triggering components magnitude $\alpha$ and temporal $g(t)$ as in \eqref{algo1_trigger_mag} and \eqref{algo1_trigger_time}.}
	\STATE{3. Update probabilities $p_{ij}^{(v+1)}$'s as in \eqref{algo1_trigger_prob} and \eqref{algo1_background_prob}.}
	\STATE{4. $v \leftarrow v + 1$}
	\ENDWHILE
    \end{algorithmic}
\end{algorithm}
\subsection{EM-type algorithm derivation}\label{algo1EM}
In \citet{fox2016spatially}, they assumed the ground-truth takes piecewise constant form \eqref{piecewiseconst} and demonstrated that algorithm \ref{algo1} is an EM-type algorithm under \eqref{piecewiseconst} by using complete data log-likelihood. However, they did not explicitly show the E-step also maximizes the complete data log-likelihood (or rather complete data Quasi-log-likelihood in our setting). We will first lower bound the Quasi-log-likelihood and then show that the algorithm iterates between maximizing this lower bound w.r.t. branching structure ($p_{ij}$'s) and w.r.t. the Quasi-conditional intensity (Quasi-background rate $\mu$, Quasi-triggering magnitude $\alpha$ and temporal Quasi-triggering function $g$). 

First recall the Quasi-log-likelihood function under $H_0$: 
\begin{equation*} \begin{split}  \ell_0(\theta)=- \mu T +\sum_{i=1}^{N}  \log \left(\mu+\sum_{i>j}  \phi\left(t_{i}-t_{j} \right)\right)  
-\sum_{j=1}^{N} \int_{t_{j}}^{T}  \phi\left(t-t_{j}\right)   d t
\end{split}\end{equation*}
We can simplify the last term above by using integral approximation of \citet{schoenberg2013facilitated}:
\begin{equation*} \begin{split} \sum_{j=1}^{N} \int_{t_{j}}^{T}   \phi\left(t_{i}-t_{j} \right)  d t=\sum_{j=1}^{N} \int_{t_{j}}^{T} \alpha g\left(t-t_{j}\right) d t \approx\sum_{j=1}^{N}  \alpha \int_{t_{j}}^{\infty} g\left(t-t_{j}\right) d t     =\alpha N
\end{split}\end{equation*}
Thus we can ignore the last term when maximizing the log-likelihood function. Next, we lower bound the first term in the Quasi-log-likelihood function by Jensen's inequality:
\begin{equation*} \begin{split}  \sum_{i=1}^{N}  \log \left(\mu +\sum_{i>j}  \phi\left(t_{i}-t_{j} \right)\right)
= &\sum_{i=1}^{N}  \log \left(p_{ii}\frac{\mu }{p_{ii}}+\sum_{i>j} p_{ij} \frac{\phi\left(t_{i}-t_{j} \right)}{p_{ij}}\right)\\ 
\geq &\sum_{i=1}^{N}  p_{ii}\log \left(\frac{\mu }{p_{ii}}\right) +\sum_{i>j} p_{ij} \log \left(\frac{\phi\left(t_{i}-t_{j} \right)}{p_{ij}}\right),
\end{split}\end{equation*}
where $p_{ij}$'s satisfy $\sum_{i\geq j} p_{ij} = 1$. Then we can get a lower bound on the approximation of Quasi-log-likelihood under the piecewise constant parameterization:
\begin{equation*} \begin{split}  - \alpha N -T \mu +
\sum_{i=1}^{N} \Bigg[ p_{i i} \log \left(\mu \right)  + \sum_{i>j} p_{i j}\bigg(\log \alpha + \log \Big(\sum_{k=1}^{n_{0}} g_{k} \mathbf{1}_{B_{k}}\left(t_{i}-t_{j}\right)\Big)\bigg) - \sum_{i\geq j} p_{i j} \log(p_{i j})\Bigg] 
\end{split}\end{equation*}
Denote this lower bound by $\Tilde{\ell}(\theta)$. We maximize this lower bound under the following constraints:
\begin{equation*} \begin{split}
     \sum_{k=1}^{n_{0}} g_{k} \Delta t_{k} = 1, & \quad (g(t) \ \text{is a p.d.f.}) \\
    \sum_{i\geq j} p_{ij} = 1, & \quad (p_{ij}\text{'s are probability weights}) 
    \end{split}\end{equation*}
By adding Lagrange multipliers, this is equivalent to maximizing the following objective:
\begin{equation*} \begin{split}
\Tilde{L} =  \ &\sum_{i=1}^{N} \Bigg[ p_{i i} \log \left(\mu \right)  + \sum_{i>j} p_{i j}\bigg(\log \alpha + \log \Big(\sum_{k=1}^{n_{0}} g_{k} \mathbf{1}_{B_{k}}\left(t_{i}-t_{j}\right)\Big)\bigg) - \sum_{i\geq j} p_{i j} \log(p_{i j})\Bigg]
\\ &- \alpha N -T \mu-c_{1}\left(\sum_{k=1}^{n_{0}} g_{k} \Delta t_{k}-1\right)-\sum_{i=1}^{N}  c_{2}^{(i)}\left(\sum_{i\geq j} p_{ij} - 1\right).
\end{split}\end{equation*}

\textbf{M-step:} By taking first order derivative w.r.t. $\mu$ and setting it to zero, we will have:

$$
\begin{aligned}
\frac{\partial \Tilde{L}}{\partial \mu} =\sum_{i=1}^{N}\left(\frac{p_{i i}   }{\mu }\right)-T = 0.
\end{aligned}
$$
Solving for $\mu$ and we will get 
$$
\mu =\frac{\sum_{i=1}^{N} p_{i i}   }{T  },
$$ which is the same as the update in step 1 in Algorithm~\ref{algo1}. This means when we have $p_{ij}^{(v)}$'s at iteration $v$, the update in step 1 in Algorithm~\ref{algo1} leads to a larger Quasi-log-likelihood value. Similarly taking derivative w.r.t. $\alpha$ and setting it to zero leads to the update in step 2: $\alpha^{(v)} = 1 - \sum_{i=1}^N p_{ii}^{(v)}/N$.
$$
\begin{aligned}\frac{\partial \Tilde{L}}{\partial g_{k}} &=\sum_{i=1}^{N} \sum_{i>j}\left(\frac{p_{i j} \mathbf{1}_{B_{k}}\left(t_{i}-t_{j}\right)}{g_{k}}\right)-c_{1} \Delta t_{k}=0 \\
\frac{\partial \Tilde{L}}{\partial c_{1}} &=1-\sum_{k=1}^{n_{0}} g_{k} \Delta t_{k}=0
\end{aligned}
$$

We can solve for $g_k$ and $c_1$ by some simple algebra and then get the update for $g_k$ at iteration $v$ (given $p_{ij}^{(v)}$'s) :
$$
g_{k}^{(v)}=\frac{\sum_{i=1}^{N} \sum_{i>j} p_{i j}^{(v)} \mathbf{1}_{B_{k}}\left(t_{i}-t_{j}\right)}{\Delta t_{k} \sum_{j=1}^{N} \sum_{i>j} p_{i j}^{(v)}}.
$$

\textbf{E-step:} As for $p_{ij}$'s, denote $$ \log \phi_{ij} = \log \alpha + \log \left(\sum_{k=1}^{n_{0}} g_{k} \mathbf{1}_{B_{k}}\left(t_{i}-t_{j}\right)\right). $$ Repeat the similar procedure, we will get:
$$
\begin{aligned}\frac{\partial \Tilde{L}}{\partial p_{ij}} &=-\log(p_{ij}) -1 -  c_{2}^{(i)} +\log \phi_{ij}=0 \\
\frac{\partial \Tilde{L}}{\partial p_{ii}} &=-\log(p_{ii}) -1 -  c_{2}^{(i)} +\log \mu=0\\
\frac{\partial \Tilde{L}}{\partial  c_{2}^{(i)}} &=\sum_{i\geq j} p_{ij} - 1=0
\end{aligned}
$$
By the first two equations we have $$\frac{p_{ii}}{p_{ij}} = \frac{\mu}{\phi_{ij}}.$$ Plug this back into the last equation and we will get the update in step 3 in Algorithm~\ref{algo1}. Thus, we validate Algorithm~\ref{algo1} as an EM-type algorithm.

\section{Explicit form of GS statistic}\label{explicitGS}
Note that $\phi^{(z)}(u) = \sum_{k=1}^{n_0} \phi_k^{(z)}\mathbf{1}_{B_{k}}(u)$. To simplify the explicit expressions, we first define the following notations:
\begin{equation*} \begin{split}
    G(i,z';z) &= \sum_{j=1}^{N_{z'}} \phi^{(z)}(t_i^{(z')}-t_j^{(z)}),\\
    G_k'(i,z';z) &= \sum_{j=1}^{N_{z'}} \mathbf{1}_{B_{k}}(t_i^{(z')}-t_j^{(z)})\\
    \Delta_{z,i} &= \mu+
      G(i,z;z)+
     G(i,z;z')
\end{split}\end{equation*}

Here, $G(i,z';z)$ represents the triggering effect of events in process $z$ to $i-$th event in process $z'$. $G_k'(i,z';z)$ is the partial derivative of $G(i,z';z)$ w.r.t. $\phi_k^{(z)}$. 

Note that $\phi^{(z)}(\cdot)$ and $\mathbf{1}_{B_{k}}(\cdot)$ $(k=1,2,\dots,n_0)$ take value zero on $(-\infty,0]$. Thus we have $$\sum_{j<i} \phi^{(z)}(t_i^{(z)}-t_j^{(z)}) = \sum_{j=1}^{N_{z}} \phi^{(z)}(t_i^{(z)}-t_j^{(z)}), $$which can be denoted by $G(i,z;z)$ we just defined. By our notations, the Quasi-log-likelihood takes the following form: \begin{equation*} \begin{split}
   \ell_1(\mu, \phi^{(1)},  \phi^{(2)}|\cH_{t}) =  - \mu \ T + \sum_{z=1}^2 \sum_{i=1}^{N_z} \log \Delta_{z,i} -  \int_0^{T-t_i^{(z)}} \phi^{(z)} (u)du,
\end{split}\end{equation*}where $(\mu, \phi^{(1)},\phi^{(2)}) = (\mu,\phi_1^{(1)},\dots,\phi_{n_0}^{(1)},\phi_1^{(2)},\dots,\phi_{n_0}^{(2)})$. Those parameters are denoted by $\theta$ to simplify the notations. To get the explicit form of GS statistic, we only need to calculate the first two order partial derivative of $\ell_1(\mu, \phi^{(1)},  \phi^{(2)}|\cH_{t})$ w.r.t. $\theta$.

\textbf{First order partial derivatives:} 
\begin{equation*} \begin{split}
    \frac{\partial \ell_1(\mu, \phi^{(1)},  \phi^{(2)}|\cH_{t})}{ \partial \mu} =& \sum_{z=1}^2 \sum_{i=1}^{N_z} \frac{1}{\Delta_{z,i}
   }-  T,\\
    \frac{\partial \ell_1(\mu, \phi^{(1)},  \phi^{(2)}|\cH_{t})}{ \partial \phi_k^{(z)}} =&  \sum_{i=1}^{N_z} \frac{   G_k'(i,z;z)}{\Delta_{z,i}
   }+\sum_{i=1}^{N_{z'}} \frac{ G_k'(i,z';z)}{\Delta_{z',i}
   }-\sum_{i=1}^{N_z} \int_0^{ T - t_i^{(z)}}   \mathbf{1}_{B_{k}}(u) du.
\end{split}\end{equation*}Here, we get the explicit expression for $S_T(\theta)$ and $A_T(\theta)$.

\textbf{Second order partial derivatives:} 
\begin{equation*} \begin{split}
    \frac{\partial^2 \ell_1(\mu, \phi^{(1)},  \phi^{(2)}|\cH_{t})}{ \partial \mu^2} =& - \sum_{z=1}^2  \sum_{i=1}^{N_z} \frac{1}{\Delta_{z,i}^2
   }\\
    \frac{\partial^2 \ell_1(\mu, \phi^{(1)},  \phi^{(2)}|\cH_{t})}{ \partial (\phi_k^{(z)})^2} =&  - \sum_{i=1}^{N_z} \left(\frac{   G_k'(i,z;z)}{\Delta_{z,i}
   }\right)^2-\sum_{i=1}^{N_{z'}} \left(\frac{ G_k'(i,z';z)}{\Delta_{z',i}
   }\right)^2
 \end{split}\end{equation*}
\begin{equation*} \begin{split}
   \frac{\partial^2 \ell_1(\mu, \phi^{(1)},  \phi^{(2)}|\cH_{t})}{ \partial \mu \partial \phi_k^{(z)}} =&  -  \sum_{i=1}^{N_z}  \frac{  G_k'(i,z;z)}{\Delta_{z,i}^2
   }+\sum_{i=1}^{N_{z'}}   \frac{G_k'(i,z';z)}{\Delta_{z',i}^2
   }\\
    \frac{\partial^2 \ell_1(\mu, \phi^{(1)},  \phi^{(2)}|\cH_{t})}{\partial \phi_k^{(z)} \partial \phi_l^{(z')} } =&-  \sum_{i=1}^{N_z} \frac{  G_l'(i,z;z')  G_k'(i,z;z)}{\Delta_{z,i}^2
   }+\sum_{i=1}^{N_{z'}}  \frac{  G_l'(i,z';z') G_k'(i,z';z)}{\Delta_{z',i}^2
   }\\
    \frac{\partial^2 \ell_1(\mu, \phi^{(1)},  \phi^{(2)}|\cH_{t})}{\partial \phi_k^{(z)} \partial \phi_l^{(z)}} =&- \sum_{i=1}^{N_z} \frac{  G_k'(i,z;z)G_l'(i,z;z)}{\Delta_{z,i}^2
   }+\sum_{i=1}^{N_{z'}}  \frac{  G_k'(i,z';z)G_l'(i,z';z)}{\Delta_{z',i}^2
   }
\end{split}\end{equation*}
\section{Asymptotic properties of QMLE and GS test} \label{all_asym}
\subsection{Identifiability of the estimand and justification of our testing framework}
\begin{proof}[Proof of Identifiability]

One can verify that for each specific sample trajectory $\cH_T$, $\ell_1 (\theta|\cH_T)$ is composed of two parts: a linear function of $\theta$ plus several logarithm of a linear function of $\theta$. This means that $\ell_1 (\theta|\cH_T)$ is concave in $\theta$. We will use a very simple example to elaborate on this. 

Suppose we only have 3 events $\cH_T = \{t_1^{(1)},t_1^{(2)},t_2^{(1)}\}$, where $T = t_2^{(1)}$. Then\begin{align*}
    \ell_1 (\theta|\cH_T) =& -\mu T 
- \sum_{k=1}^{n_0} \phi_k^{(1)} \int_0^{T - t_1^{(1)}} \mathbf{1}_{B_k}(u)du
- \sum_{k=1}^{n_0} \phi_k^{(2)} \int_0^{T - t_1^{(2)}} \mathbf{1}_{B_k}(u)du\\
&+\log \mu
+ \log (\mu + \phi_{k_1}^{(1)})
+ \log (\mu + \phi_{k_2}^{(1)}+\phi_{k_3}^{(2)}),
\end{align*}where $k_1,k_2,k_3$ are the indices of the bins which $t_1^{(2)} - t_1^{(1)},t_2^{(1)}- t_1^{(1)},t_2^{(1)}-t_1^{(2)}$ fall into, respectively.
Note that $\int_0^{t} \mathbf{1}_{B_k}(u)du$ is the length of the intersection of $[0,t]$ and $B_k$, which is a constant.
Thus, it is easy to see from the example that $\ell_1 (\theta|\cH_T)$ is concave in $\theta$ for any fixed trajectory.


Next, we can show that $\ell_1 (\theta|\cH_T)$ will remain the same for sample trajectories that are "close" to each other. In the simple example above, as long as $t_1^{(2)} - t_1^{(1)},t_2^{(1)}- t_1^{(1)},t_2^{(1)}-t_1^{(2)}$ remain in bins $B_{k_1},B_{k_2},B_{k_3}$, the value of the corresponding $\ell_1 (\theta|\cH_T)$ will not change. For fixed number of events $N$, we call all trajectories with $N$ events that corresponds to the same Quasi-log-likelihood value a case. It is easy to see the number of all cases for fixed number of events $N$ is countable. Then the expectation taken w.r.t. all possible trajectories will reduce to a countablely infinite summation. That is $$\EE [\ell_1(\theta|\cH_T)] = \sum_i \ell_{1,i}(\theta) p_i,$$where $p_i$ is the probability of all sample trajectories such that $\ell_1 (\theta|\cH_T) \equiv \ell_{1,i}(\theta)$. Note that we just show $\ell_{1,i}(\theta)$ is concave in $\theta$. Thus the objective is a linear combination of concave functions. This means $\theta_0$ actually solves a concave program. It is a unique maximizer of the expected Quasi-log-likelihood, i.e. globally identifiable. We have a well-defined estimand here.
\end{proof}


\begin{proof}[Justification of our testing framework]
By adopting the view in \citet{Akaike1998}, in \eqref{def_theta0} we are actually trying to find a $\theta_0 \in \Theta$ whose corresponding Quasi-likelihood has a minimum K-L divergence with the unknown ground-truth $\lambda^*$. 

As is suggested in \citet{Akaike1998}, we can view this as a statistical decision problem where the loss function is $\log \lambda^* / \lambda_{\theta}$. For the simple example above, the loss function can be expressed by\begin{align*}
   & (\mu - \mu^*) T +
\sum_{k=1}^{n_0}  \int_0^{T - t_1^{(1)}} (\phi_k^{(1)} - \phi^{*(1)}(u))\mathbf{1}_{B_k}(u)du
+ \sum_{k=1}^{n_0}  \int_0^{T - t_1^{(2)}} (\phi_k^{(2)} - \phi^{*(2)}(u)) \mathbf{1}_{B_k}(u)du\\
+&\log (\mu^*/\mu)
+ \log \frac{\mu^* + \phi^{*(1)}(t_1^{(2)} - t_1^{(1)})}{\mu + \phi_{k_1}^{(1)}}
+ \log \frac{\mu^* + \phi^{*(1)}(t_2^{(1)}- t_1^{(1)}) + \phi^{*(2)}(t_2^{(1)}-t_1^{(2)})}{\mu + \phi_{k_2}^{(1)}+\phi_{k_3}^{(2)}},
\end{align*}

Taking all possible sample trajectories into account, when $ \phi^{*(1)} =  \phi^{*(2)}$, apparently we will achieve minimum risk when $\theta_0 \in \Theta_0$.
\end{proof}

\subsection{Proof of Lemma~\ref{asym_qmle}: consistency and asymptotic normality of QMLE}\label{consistency}
\begin{proof}

We will provide a generalization of the asymptotic properties MLEs under correct model specification for temporal Hawkes process in \citet{ogata1978asymptotic} to model misspecification (or model mismatch) case.

We first show that the assumptions in \citet{ogata1978asymptotic} hold for our Quasi-conditional intensity function.

\textbf{(A)} Since under our parameterization \eqref{piecewiseconst}, we have $\int_0^{\infty} \alpha g(t) dt = \alpha <1$, our point process model is stationary and ergodic. It is easy to check assumptions (A1) $\sim$ (A3).

\textbf{(B)} The Quasi-conditional intensity function we consider here is actually linear w.r.t. the parameters, then it is arbitrarily order continuous differentiable (i.e. smooth) and bounded within any compact set in the Quasi-parameter space. Assumptions (B1) $\sim$ (B7) hold trivially.

\textbf{(C)} By \eqref{piecewiseconst}, the Quasi-temporal triggering function is truncated on $[0,T_0]$, which means and complete data conditional intensity function $\lambda(t|\cH_{-\infty,t})$ will be exactly the same as $\lambda(t|\cH_{0,t})$ as long as $t>T_0$. Since Assumptions (C1) $\sim$ (C4) only require stochastic approximations of $\lambda(t|\cH_{0,t})$ to $\lambda(t|\cH_{-\infty,t})$ when $t$ goes to infinity, it is easy to see those assumptions are satisfied.

Next, since our parametric form \eqref{piecewiseconst} is only approximation to the true one, we need to slightly modify the theoretical results in \citet{ogata1978asymptotic} for our QMLE. Here we will not mention theorems or lemmas that we do not need to modify under model mismatch (except that we should keep in mind that the "true" parameter in \citet{ogata1978asymptotic} is understood as the maximizer of Quasi-likelihood) and it is easy to verify those theoretical results (from the beginning to Theorem 5) by just following the proof therein.

Before we proceed to the proof, we should note that the QMLE is $\Hat{\theta}_{QMLE}$ under $H_0$ and $\Tilde{\theta}_{QMLE}$ under $H_1$. Under $H_1: \theta_0 \not \in \Theta_0$, the estimator $\Tilde{\theta}_{QMLE}$ is obtained using the full model conditional intensity $\ell_1$ instead of $\ell_0$. The estimation is given in Algorithm~\ref{algo2} in Appendix~\ref{add_exp}.

For simplicity, we denote $\Bar{\theta}_{QMLE}$ to be $\Hat{\theta}_{QMLE}$ and $\Tilde{\theta}_{QMLE}$ under $H_0$ and $H_1$, respectively. That is,
\begin{equation*} 
    \Bar{\theta}_{QMLE}=\left\{\begin{array}{ll}
\Hat{\theta}_{QMLE},  &H_0 \text { is true}\\
\Tilde{\theta}_{QMLE},  &H_1\text { is true}
\end{array}\right.
\end{equation*}

\textbf{Modifications on Theorem 1.} Here $\theta_0$ is not the true parameter of the true conditional intensity function. Instead, it is the maximizer of Quasi-log-likelihood, i.e. our approximation to the true log-likelihood function. By the definition of $\theta_0$ and stationarity of the process, the first result still in this theorem still holds:
$$\frac{\partial \EE \big[ \ell_1 (\theta|\cH_t)\big]}{\partial \theta}\bigg|_{\theta = \theta_0} = 0.$$

However, the second result does not hold unless our approximation is indeed a correct specification of the model. More specifically, in general, $$dN(t) = \lambda^*(t|\cH_t)dt \not =  \lambda(t|\cH_t)dt,$$where $\lambda^*$ is the correct parametric form and typically unknown in practice. 

Thus, we have $$\EE \bigg[\frac{\partial  \ell_1 (\theta|\cH_t)}{\partial \theta_i}\frac{\partial  \ell_1 (\theta|\cH_t)}{\partial \theta_j}\bigg]\Bigg|_{\theta = \theta_0} \not = - \EE \bigg[\frac{\partial^2  \ell_1 (\theta|\cH_t)}{\partial \theta_i \partial \theta_j}\bigg]\Bigg|_{\theta = \theta_0}.$$

Using our notation, this can be re-expressed as $A(\theta_0)  \not =  B(\theta_0)$.

\textbf{Modifications on Theorem 2.} The convergence in our case is much stronger. By following the proof in \citet{fox2016spatially}, the convergence in probability comes from Assumptions (C), where the convergence in the stochastic approximation is only in probability sense. However, we just show that the stochastic approximation holds for every sample path as long as $t>T_0$ based on our parameterization \eqref{piecewiseconst} that the Quasi-temporal triggering function is truncated, i.e. our convergence is in almost surely sense. Thus, we have:$$\Bar{\theta}_{QMLE} \overset{a.s.}{\to} \theta_0 \ \ \ \text{  as  }\ \ \  T \overset{}{\to} \infty.$$

\textbf{Modifications on Theorem 4.} Since $A(\theta_0)  \not = B(\theta_0)$, the convergence result should be$$\frac{1}{\sqrt{T}} \frac{\partial  \ell_1 (\theta|\cH_T)}{\partial \theta}\bigg|_{\theta = \theta_0} \overset{d}{\to} N (0,A(\theta_0)) \ \ \ \text{  as  } \ \ \ T \overset{}{\to} \infty.$$

This is because $$\EE \bigg[\frac{\partial  \ell_1 (\theta|\cH_1)}{\partial \theta} \frac{\partial  \ell_1 (\theta|\cH_1)}{\partial \theta^\intercal}\bigg] = A(\theta_0)  \not = B(\theta_0),$$ where the first equality comes from definition and stationarity of the process.

\textbf{Modifications on Theorem 5.} By the proof of this theorem one can reach this result:
$$\sqrt{T} (\Bar{\theta}_{QMLE} - \theta_0) \overset{d}{\to} N \Big(0,B^{-1}(\theta_0)A(\theta_0)B^{-1}(\theta_0)\Big) \ \ \ \text{  as  } \ \ \ T \overset{}{\to} \infty.$$

Again, since $A(\theta_0)  \not = B(\theta_0)$, the asymptotic covariance matrix is not $B^{-1}(\theta_0)$ and that's the modification here. Besides, the asymptotic $\chi^2$ distribution of log-likelihood
ratio does not hold because of the model mismatch.

Here, we complete the proof.
\end{proof}
\subsection{Proof of Theorem~\ref{asym_null}: asymptotic distribution under null hypothesis}\label{null}
This proof is highly involved. To help better understand this proof, we first provide a high level sketch on why our GS statistic follows a $\chi^2$ distribution.
\begin{proof}[Proof Sketch]  $\Hat{\theta}_{QMLE}$ solves the following problem $$\max_{\theta \in \Theta} \ell_1(\theta|\cH_T) \ \ \ \text{  s.t.  } \ \ \  h(\theta)=0.$$ By adding Lagrange Multiplier $\zeta_T$, we can derive that $\Hat{\theta}_{QMLE}$ satisfies:
\begin{equation}\label{lagrange}
    \nabla \ell_1(\Hat{\theta}_{QMLE}|\cH_T) + \zeta_T^\intercal\nabla h(\Hat{\theta}_{QMLE}) =  S_T(\Hat{\theta}_{QMLE}) + \zeta_T^\intercal\nabla h(\Hat{\theta}_{QMLE}) = 0.
\end{equation}
Following idea in \citet{boos1992generalized}, we can use Taylor expansion to expand $S_T(\theta_0)$ about $\Hat{\theta}_{QMLE}$ and $h(\Hat{\theta}_{QMLE})$ about $\theta_0$ (note that we have $h(\theta_0)=0$ under $H_0$):
\begin{equation*} \begin{split}
    S_T(\Hat{\theta}_{QMLE}) & = S_T(\theta_0) - B_T(\Hat{\theta}_{QMLE}) (\Hat{\theta}_{QMLE} - \theta_0) + o(1), \\
    0 = h(\Hat{\theta}_{QMLE}) & = h(\theta_0)+ \nabla h(\theta_0) (\Hat{\theta}_{QMLE} - \theta_0) + o(1).
\end{split}\end{equation*}

Note that by our notation $\nabla h(\theta) = H(\theta)$. Since $h(\theta)$ is linear in $\theta$, its gradient is a constant matrix and we can denote $H = \nabla h(\theta)$. 

Pre-multiply the first equation above by $H^\intercal \Big(H B_T^{-1}(\theta) H^\intercal\Big)^{-1}  H B_T^{-1}(\theta)\bigg|_{\theta=\Hat{\theta}_{QMLE}}$ 
\begin{equation*} \begin{split}
    & H^\intercal \Big(H B_T^{-1}(\theta) H^\intercal\Big)^{-1}  H B_T^{-1}(\theta)S_T(\theta_0)\bigg|_{\theta=\Hat{\theta}_{QMLE}} \\ 
  & \ \ \ \ \ \ \ \ \ \  =  B_T^{\frac{1}{2}}(\theta) \left( B_T^{-\frac{1}{2}}(\theta) H^\intercal \Big(H B_T^{-1}(\theta) H^\intercal\Big)^{-1}  H B_T^{-\frac{1}{2}}(\theta) \right) B_T^{-\frac{1}{2}}(\theta) S_T(\theta)\bigg|_{\theta=\Hat{\theta}_{QMLE}}  + o(1) .
\end{split}\end{equation*}
The matrix in the middle of RHS is a projection matrix for the column space of $B_T^{-\frac{1}{2}}(\Hat{\theta}_{QMLE}) H^\intercal$, and from \eqref{lagrange} we know $B_T^{-\frac{1}{2}}(\Hat{\theta}_{QMLE}) S_T(\Hat{\theta}_{QMLE})$ is already in this space. This means the RHS is exactly $S_T(\Hat{\theta}_{QMLE})$ and we will get:
$$S_T(\Hat{\theta}_{QMLE})= H^\intercal \Big(H B_T^{-1}(\theta_0) H^\intercal\Big)^{-1}  H B_T^{-1}(\theta_0) S_T(\theta_0) + o(1).$$Rewrite GS statistic as $$\Hat{GS}_T = \frac{1}{\sqrt{T}} S_T^\intercal(\Hat{\theta}_{QMLE}) \left(T  \Hat{\Sigma}^{-1}\right) \frac{1}{\sqrt{T}}S_T(\Hat{\theta}_{QMLE}).$$ By Lemma \ref{asym_qmle}, one can verify $S_T(\Hat{\theta}_{QMLE})/\sqrt{T}$ has a asymptotic normal distribution with $T \Hat{\Sigma}^{-1}$ being a consistent estimator of generalized inverse of its asymptotic covariance matrix. 

Since $H$ is of rank $r$, we verify that $\Hat{GS}_T \sim \chi^2_r$.\end{proof}

Next, we present a more rigorous proof following the method in \citet{white1982maximum}.

\begin{proof}
We first state some useful results:

By the almost surely convergence of QMLE (modifications of Theorems 2 and 5 in \citet{ogata1978asymptotic}), we have that\begin{equation*} \begin{split}
   & \frac{1}{T}A_T(\Hat{\theta}_{QMLE}) \overset{a.s.}{\to} A(\theta_0) \ \ \ \text{ as }  \ \ \ T\rightarrow \infty\\
       & \frac{1}{T} B_T(\Hat{\theta}_{QMLE}) \overset{a.s.}{\to}B(\theta_0) \ \ \ \text{ as }  \ \ \ T\rightarrow \infty.
\end{split}\end{equation*}

The modification of Theorem 1 in \citet{ogata1978asymptotic} can be re-expressed as $S(\theta_0) = 0.$

The modification of Theorem 4 in \citet{ogata1978asymptotic} can be re-expressed as follows$$ \frac{1}{\sqrt{T}}S_T(\theta_0)\overset{d}{\to} N \Big(0,A(\theta_0)\Big) \ \ \ \text{ as }  \ \ \ T\rightarrow \infty, $$where $S_T$ is the Quasi-score function (i.e. first order gradient of Quasi-log-likelihood function).

Under null hypothesis, the asymptotic $\chi^2$ distribution of GS statistic under model mismatch (e.g. Theorem 3.5. in \citet{white1982maximum} and Section 4.2. in \citet{boos1992generalized}) can be extended to temporal Hawkes process.

The QMLE actually solves the following optimization problem: $$ \max_{\theta \in \Theta_0}  \ell_0(\theta|\cH_T).$$ Since $\ell_1(\theta) = \ell_0 (\theta) \  (\forall \theta \in \Theta_0)$, equivalently it can be re-expressed as $$\max_{\theta \in \Theta_0}  \ell_1(\theta|\cH_T),$$ or $$  \max_{\theta \in \Theta}  \ell_1(\theta|\cH_T) \ \ \  \text{ s.t. } \ \ \ h(\theta)=0.$$

We can reformulate this by adding Lagrange Multiplier $\zeta_T$:
$$ \max_{\theta \in \Theta} \frac{1}{T} \ell_1(\theta|\cH_T) + \zeta_T^\intercal h(\theta).$$

Since $h$ as well as $\nabla h$ both has full row rank $r$, by Lagrange Multiplier Theorem (e.g. Theorem 42.9 in \citet{bartle1976elements}), we can guarantee the existence of $\zeta_T$, which satisfies:
\begin{align}
    \label{zeta_condi}&\frac{1}{T} \nabla \ell_1(\Hat{\theta}_{QMLE}|\cH_T) +  \Big(\nabla h(\Hat{\theta}_{QMLE})\Big)^\intercal  \zeta_T= 0,\\
    \nonumber &h(\Hat{\theta}_{QMLE})=0.
\end{align}
We denote $S_T(\theta) = \nabla \ell_1(\theta|\cH_T)$. By the mean-value theorem for random functions (Lemma 3 in \citet{jennrich1969asymptotic}), we have:
\begin{align}
    \label{expansion1}& S_T(\Hat{\theta}_{QMLE}) =  S_T(\theta_0) +  B_T(\Bar{\theta}) (\Hat{\theta}_{QMLE} - \theta_0),\\
    \label{expansion2}&0 = h(\Hat{\theta}_{QMLE})=h(\theta_0)+ \nabla h(\Tilde{\theta}) (\Hat{\theta}_{QMLE} - \theta_0) ,
\end{align}where $\Tilde{\theta}$ and $\Bar{\theta}$ lies on the segment joining $\Hat{\theta}_{QMLE}$ and $\theta_0$. Since $\Hat{\theta}_{QMLE}$ converges to $\theta_0$ almost surely, $\Tilde{\theta}$ and $\Bar{\theta}$ both converge to $\theta_0$ almost surely.


Under $H_0$: $\theta_0 \in \Theta_0$, we have $h(\theta_0) = 0$. Plug this back into the mean-value expansion \eqref{expansion2} we will get: \begin{equation}\label{expansion2-1}
\nabla h(\Tilde{\theta}) \sqrt{T}(\Hat{\theta}_{QMLE} - \theta_0) = 0.\end{equation}

Multiply \eqref{zeta_condi} by $\sqrt{T}$ and plug the mean-value expansion \eqref{expansion1} into it, we will get:
\begin{equation} \label{expansion1-1}
\begin{split}
    & \frac{1}{\sqrt{T}} S_T(\theta_0) + \frac{1}{T} B_T(\Bar{\theta}) \sqrt{T}(\Hat{\theta}_{QMLE} - \theta_0) + \sqrt{T} \Big(\nabla h(\Hat{\theta}_{QMLE})\Big)^\intercal  \zeta_T = 0,
\end{split}
\end{equation}

Since $B_T(\Bar{\theta})/T  \overset{a.s.}{\to}  B(\theta_0)$, the non-singularity of $B_T(\Bar{\theta})$ directly follows Assumption (B6) in \citet{ogata1978asymptotic} for sufficiently large $T$. Pre-multiplying \eqref{expansion1-1} by $\nabla h(\Tilde{\theta}) B_T^{-1}(\Bar{\theta})$ and plug \eqref{expansion2-1} into it, we will get:
\begin{equation*} \begin{split}
    0=& \nabla h(\Tilde{\theta}) B_T^{-1}(\Bar{\theta}) \left( \frac{1}{\sqrt{T}} S_T(\theta_0) + \frac{1}{T} B_T(\Bar{\theta}) \sqrt{T}(\Hat{\theta}_{QMLE} - \theta_0) + \sqrt{T} \Big(\nabla h(\Hat{\theta}_{QMLE})\Big)^\intercal  \zeta_T \right) \\
    =&  \nabla h(\Tilde{\theta}) B_T^{-1}(\Bar{\theta})\frac{1}{\sqrt{T}} S_T(\theta_0) + \nabla h(\Tilde{\theta}) B_T^{-1}(\Bar{\theta}) \Big(\nabla h(\Hat{\theta}_{QMLE})\Big)^\intercal   \sqrt{T} \zeta_T.
\end{split}\end{equation*}

Note that for our testing problem, since $h(\theta)$ is linear in $\theta$, $\nabla h(\theta)$ does not depend on $\theta$ and has full row rank $r$. We denote this by $H$. It is easy to verify that $H B_T^{-1}(\Bar{\theta}) H^\intercal$ is non-singular for sufficiently large $T$. Thus, pre-multiply $(H B_T^{-1}(\Bar{\theta}) H^\intercal)^{-1}$ and rearrange the terms, we will get:
\begin{equation*} \begin{split}
\sqrt{T} \zeta_T= - \Big(H B_T^{-1}(\Bar{\theta}) H^\intercal\Big)^{-1} H B_T^{-1}(\Bar{\theta})\frac{1}{\sqrt{T}} S_T(\theta_0).
\end{split}\end{equation*}
Note that we have shown that $ S_T(\theta_0)/\sqrt{T}$ is asymptotically normally distributed with covariance matrix $A(\theta_0)$, thus we will have
\begin{equation} \label{asym_zeta}
\sqrt{T} \zeta_T \overset{d}{\to} N \bigg(0,\Big(H B^{-1}(\theta_0) H^\intercal\Big)^{-1} H B^{-1}(\theta_0)A(\theta_0)B^{-1}(\theta_0)H^\intercal\Big(H B^{-1}(\theta_0) H^\intercal\Big)^{-1}\bigg). \end{equation}
We denote this covariance matrix by $Q(\theta_0)$.

Denote\begin{align}
\sqrt{T} \Tilde{\zeta}_T(\theta)= - \Big(H B_T^{-1}(\theta) H^\intercal\Big)^{-1} H B_T^{-1}(\theta)\frac{1}{\sqrt{T}} S_T(\theta).\label{score_zeta}
\end{align}
By 2c.4(x.a) in \citet{rao1973linear}, we will have $$\sqrt{T} \zeta_T-\sqrt{T} \Tilde{\zeta}_T(\theta_0) \overset{p}{\to} 0.$$

Meanwhile, by pre-multiplying \eqref{zeta_condi} by $\Big(H B_T^{-1}(\Hat{\theta}_{QMLE}) H^\intercal\Big)^{-1} H B_T^{-1}(\Hat{\theta}_{QMLE})$ (again the non-singularity holds for sufficiently large $T$), we will have $$\sqrt{T} \zeta_T = \sqrt{T} \Tilde{\zeta}_T(\Hat{\theta}_{QMLE}).$$  Thus, by \eqref{asym_zeta}, we have when $T\rightarrow \infty,$
$$ \sqrt{T} \Tilde{\zeta}_T(\Hat{\theta}_{QMLE})\overset{d}{\to} N \Big(0,Q(\theta_0)\Big). $$

We can easily re-write GS statistic $\Hat{GS}_T$ as a quadratic form of score function $S_T(\Hat{\theta}_{QMLE})$. By the notation we just defined in \eqref{score_zeta} we will have:
\begin{equation*} \begin{split}
       \Hat{GS}_T = \sqrt{T} \Tilde{\zeta}_T^\intercal(\theta) H B^{-1}(\theta) H^\intercal\bigg( H B^{-1}(\theta)\frac{A_T(\theta)}{T}B^{-1}(\theta_0)H^\intercal\bigg)^{-1} H B^{-1}(\theta) H^\intercal \sqrt{T} \Tilde{\zeta}_T(\theta)\Bigg|_{\theta=\Hat{\theta}_{QMLE}},
\end{split}\end{equation*}where the matrix in the middle $$H B^{-1}(\theta) H^\intercal\bigg( H B^{-1}(\theta)\frac{A_T(\theta)}{T}B^{-1}(\theta_0)H^\intercal\bigg)^{-1} H B^{-1}(\theta) H^\intercal\Bigg|_{\theta=\Hat{\theta}_{QMLE}}$$ is a consistent estimator of $Q(\theta_0)$, since $\Hat{\theta}_{QMLE}$ converges to $\theta_0$ almost surely. 

By Lemma 3.3 in \citet{10.2307/1913132}, we can verify the asymptotic $\chi^2$ distribution of our GS statistic.
\end{proof}
\subsection{Proof of Theorem~\ref{asym_power}: asymptotic power under alternative hypothesis}
\begin{proof} We make use of the Generalized Wald (GW) test statistic here, which is asymptotically equivalent to GS statistic under both $H_0$ and $H_1$. More specifically, by 2c.4(xiv) in \citet{rao1973linear} (or Theorem 1 in 13.6 in \citet{engle1984wald}), $$\Hat{GS}_T - \Hat{GW}_T\overset{p}{\to} 0,$$where $\Hat{GW}_T$ is the GW test statistic. We define it as follows:
\begin{equation} \label{GWstats} \begin{split}
 \Hat{GW}_T &= \ h(\theta)^\intercal  \Big(H(\theta)B_T^{-1}(\theta)A_T(\theta)B_T^{-1}(\theta)H(\theta)^\intercal\Big)^{-1} h(\theta)\bigg|_{\theta=\Tilde{\theta}_{QMLE}},
\end{split}\end{equation}where $\Tilde{\theta}_{QMLE}$ is QMLE under $H_1$. 

As we have mentioned above, $h(\theta)$ is linear in $\theta$, thus its first order gradient is a constant matrix, i.e. $H(\theta) = H$. More specifically, $h(\theta) = H \theta $. Then it is not hard to verify the asymptotic normal distribution of $h(\Tilde{\theta}_{QMLE})$ based on asymptotically normality of $\Tilde{\theta}_{QMLE}$. That is
\begin{equation*} \begin{split}
   \sqrt{T} \Big(h(\Tilde{\theta}_{QMLE}) - h(\theta_0)\Big) \overset{d}{\to} N \Big(0,H B^{-1}(\theta_0)A(\theta_0)B^{-1}(\theta_0)H^\intercal\Big) \ \ \ \text{  as  } \ \ \ T \overset{}{\to} \infty. 
\end{split}\end{equation*}Then the noncentral $\chi^2$ distribution of $\Hat{GW}_T$ as well as $\Hat{GS}_T$ directly follow. 

Since $$\theta_0 ^\intercal H^\intercal H \theta_0 = \Big(\alpha^{(1)}-\alpha^{(2)}\Big)^2 +\sum_{k=1}^{n_0}\Big(g_k^{(1)}-g_{k}^{(2)}\Big)^2 = \norm{\phi^{(1)}-\phi^{(2)}}_2^2,$$ and $H$ is of rank $r$, the noncentrality parameter is $T \norm{\phi^{(1)}-\phi^{(2)}}_2^2$ and the degree of freedom is $r$. Thus, we get that the asymptotic power function is Marcum-Q-function. 
\end{proof}
\begin{proof}[Another proof of consistency of GS test] We can re-express GW test statistic as:
\begin{equation*} \begin{split}
 \Hat{GW}_T &= T \ h(\theta)^\intercal  \left(H(\theta)  \Big(\frac{B_T(\theta)}{T}\Big)^{-1} \frac{A_T(\theta)}{T} \Big(\frac{B_T(\theta)}{T}\Big)^{-1} H(\theta)^\intercal\right)^{-1} h(\theta)\Bigg|_{\theta=\Tilde{\theta}_{QMLE}}.
\end{split}\end{equation*}

From Lemma \ref{asym_qmle} which we just prove, we have (i) $h(\Tilde{\theta}_{QMLE}) \rightarrow h(\theta_0) \not= 0$ almost surely, where the last inequality comes form $H_1: \theta_0 \not \in \Theta_0$; and (ii) $A_T(\Tilde{\theta}_{QMLE})/T$, $B_T(\Tilde{\theta}_{QMLE})/T$ converges to $A(\theta_0)$, $B(\theta_0)$ almost surely. Thus, we can verify $$\Hat{GW}_T \rightarrow \infty \ \ \  \text{ as } \ \ \  T\rightarrow  \infty.$$
Thus, we have $$\Hat{GS}_T \rightarrow \infty \ \ \  \text{ as } \ \ \  T\rightarrow  \infty,$$which indicates the unit asymptotic power of the proposed GS test, i.e. this test is consistent.
\end{proof}


\section{Numerical experiments}\label{add_exp}

\subsection{Testing details}

The testing procedures are detailed in Algorithm~\ref{algo_goodness}. We specify the data sequence sets we use, the initialization and other experiment configurations in Algorithm~\ref{algo_goodness} here for all experiments above.

\textbf{Validation of asymptotic properties in Section~\ref{expsec:validation}}: (a) For each $\alpha \in \{1.5,2,2.5,3,3.5\}$, generate $L$ data sequences as $D_1$ and another $L$ data sequences as $D_2$; (b) Generate $L$ data sequences from $\alpha=1$ as $D_1$ and another $L$ data sequences as $D_2$ from $\alpha=4$; (c) Use the first pair of data sequence set in (a) (corresponding to $\alpha=1.5$) as positive sample and data sequence set in (b) as the negative sample. For experiments in Sections~\ref{expsec:diff_n0} and \ref{expsec:comparison}, data generation mechanisms for $D_1$ and $D_2$ are the same.

The experiment configurations (initialization) are as follows: $L=1,000$, $n_0=14$ and endpoints for those bins are $(0,.04,.08,.12,.16,.2,.26,.32,.38,.45,.55,.65,.75,1,2)$ for all experiments. (a) $N=200$, $K = 20$; (b) $N \in \{50,150,\dots,850\}$, $K = 5$; (c) $N\in\{25,50,100\}$, $K = 150$.

For the experiment on how $n_0$ influences our proposed test, the endpoints for bins with $n_0 = 2,3,4,7, 14, 28$ are \\
$(0, .6, 2)$, 
$(0,.2,.6,2)$, $(0,.1,.2,.6,2)$, $(0, .08, .16, .2, .32, .45, .65, 2)$, $(0,
 .04,
 .08,
 .12,
 .16,
 .2,
 .26,
 .32,
 .38,
 .45,
 .55,
 .65,
 .75,
 $ $1,
 2)$ and $(0,
 .02,
 .04,
 .06,
 .08,
 .1,
 .12,
 .14,
 .16,
 .18,
 .2,
 .23,
 .26,
 .29,
 .32,
 .35,
 .38,
 .41,
 .45,
 .5,
 .55,
 .6,
 .65,
 $ $.7,
 .75,
 .8,
 1,
 1.5,
 2)$, respectively. We use $L=1,000$ sequences in computing $\hat GS_T$.

\textbf{Goodness-of-fit in Section~\ref{expsec:GOF}}: $D_1$ is chosen to be the testing data and $D_2$ is generated from the model fitted on the training data. The endpoints of bins are

(i) $(0,
 .02,
 .04,
 .06,
 .08,
 .1,
 .12,
 .14,
 .16,
 .18,
 .2,
 .25,
 .3,
 .35,
 .4,
 .5)$ for \texttt{Exp} and \texttt{Matern} data;
 
 (ii) $(0,
 .02,
 .04,
 .06,
 .08,
 .1,
 .12,
 .14,
 .16,
 .18,
 .2,
 .5,
 .6,
 .8,
 1)$ for \texttt{MIMIC} data; 
 
 (iii) $(0,
 .05,
 .1,
 .15,
 .2,
 .25,
 .3,
 .35,
 .4,
 .45,
 .5,
 .6,
 .8,
 1)$ for \texttt{MEME} data.

For 911 call data, $L=364$ and we use the first 200 sequences to as the training data to fit the model and the rest 164 sequences as $D_1$. Then we generate 164 data sequences as $D_2$ to perform the testing procedure. We choose $N=20$, $K=1$ and  use $(0, .02, .04, .06, .08, .1, .12, .14, .16, .18, .2, .5, 1)$ as endpoints for bins.

\subsection{Additional experiments}

\textbf{Validation of our proposed method as an model free approach.
} We use different synthetic data to validate our theoretical results. Here, the triggering function used to generate synthetic data is power function (which is commonly used in seismology) : $\phi(t) =  \alpha (P-1)c^{P-1}(t+c)^{-P}\mathbf{1}_{\{t> 0\}}$ with parameters $\mu= 20, \alpha = 0.2, C = 2, P = 13,14,\dots,17$. The experiment configurations are as follows: $L=1,000$, $n_0=12$ and endpoints for those bins are $(0, .04, .08, .12, .16, .2, .24, .28, .32, .36, .4, .7, 2)$ for all experiments. (a) $N=200$, $K = 20$; (b) $N \in \{50,150,\dots,850\}$, $K = 5$; (c) $N\in\{50,100,200\}$, $K = 150$. See the results in Figure~\ref{exp1_2}.

\begin{figure}[htp]
  \centering
\subfigure{\includegraphics[scale=0.45]{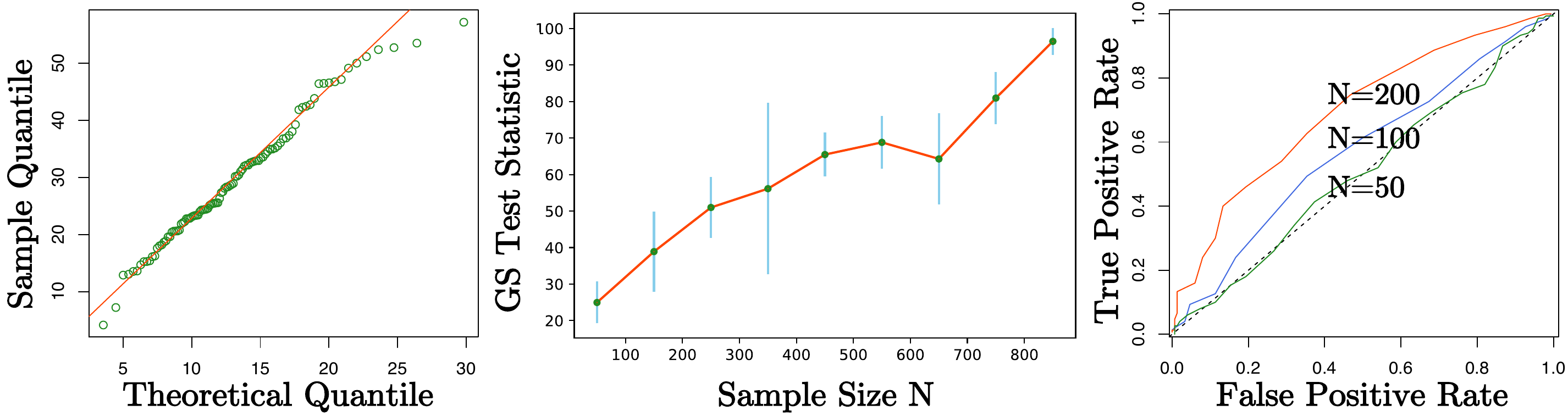}}
        \vspace{-.15in}
    \caption{ Simulation results: (a) Quantiles of calculated scores against theoretical quantiles of $\chi^2_{n_0+1}$ distribution under $H_0$; (b) mean and variance of scores with increasing $N$ under $H_1$; (c) ROC curve for different $N$.}\label{exp1_2}
    
\end{figure}

\textbf{Comparison with Ripley's K function.} See Figure~\ref{exp1_comparison}.
\begin{figure}[!htp]
  \centering
  \vspace{-.1in}
  \subfigure{\includegraphics[clip, trim=.05cm .03cm .05cm .03cm, width=0.153\textwidth]{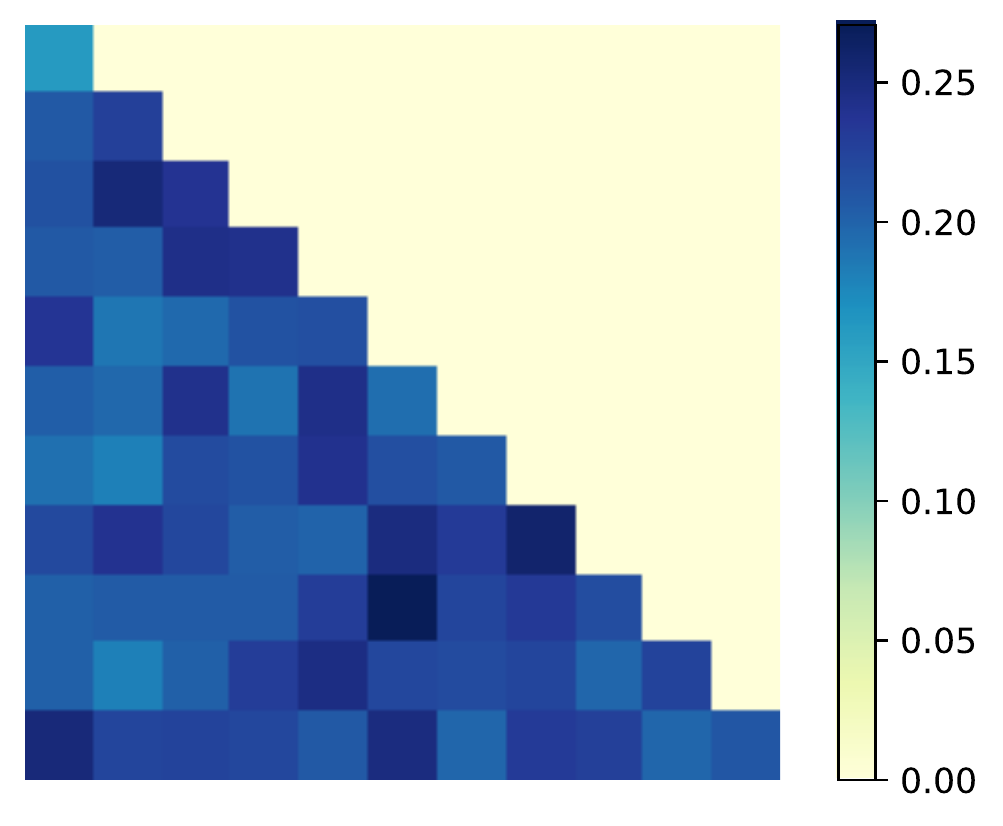}}  \subfigure{\includegraphics[clip, trim=.05cm .03cm .05cm .03cm, width=0.153\textwidth]{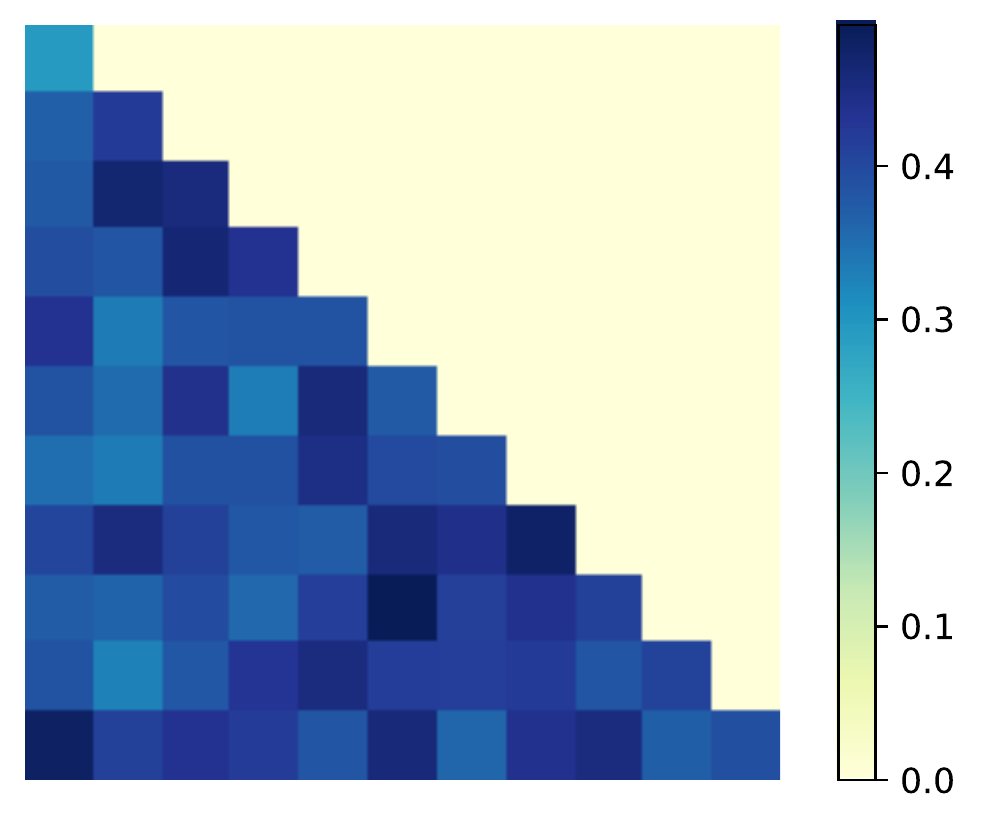}}  \subfigure{\includegraphics[clip, trim=.05cm .03cm .05cm .03cm, width=0.153\textwidth]{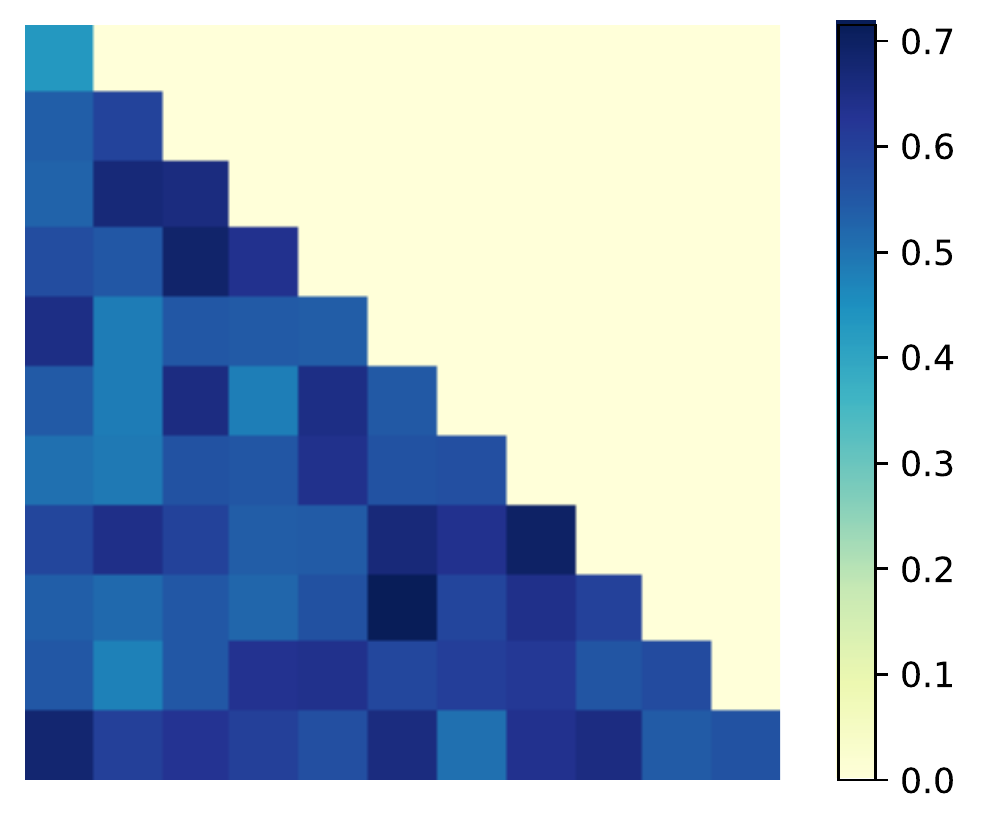}}  \subfigure{\includegraphics[clip, trim=.05cm .03cm .05cm .03cm, width=0.153\textwidth]{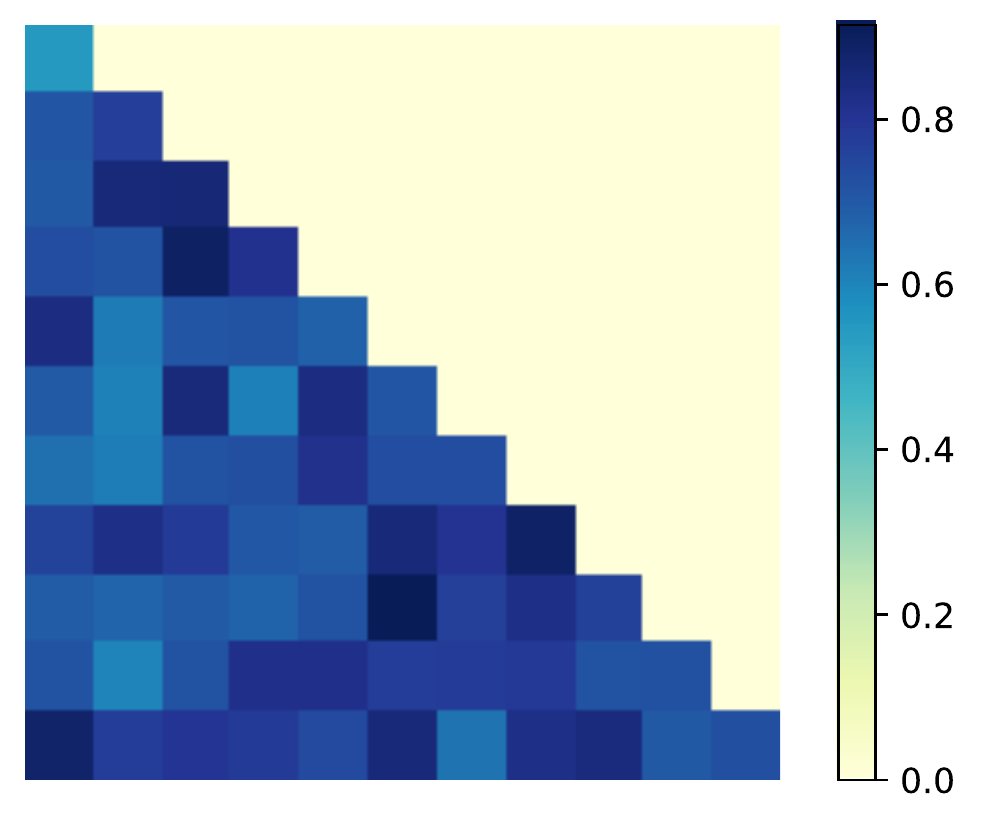}}  \subfigure{\includegraphics[clip, trim=.05cm .03cm .05cm .03cm, width=0.153\textwidth]{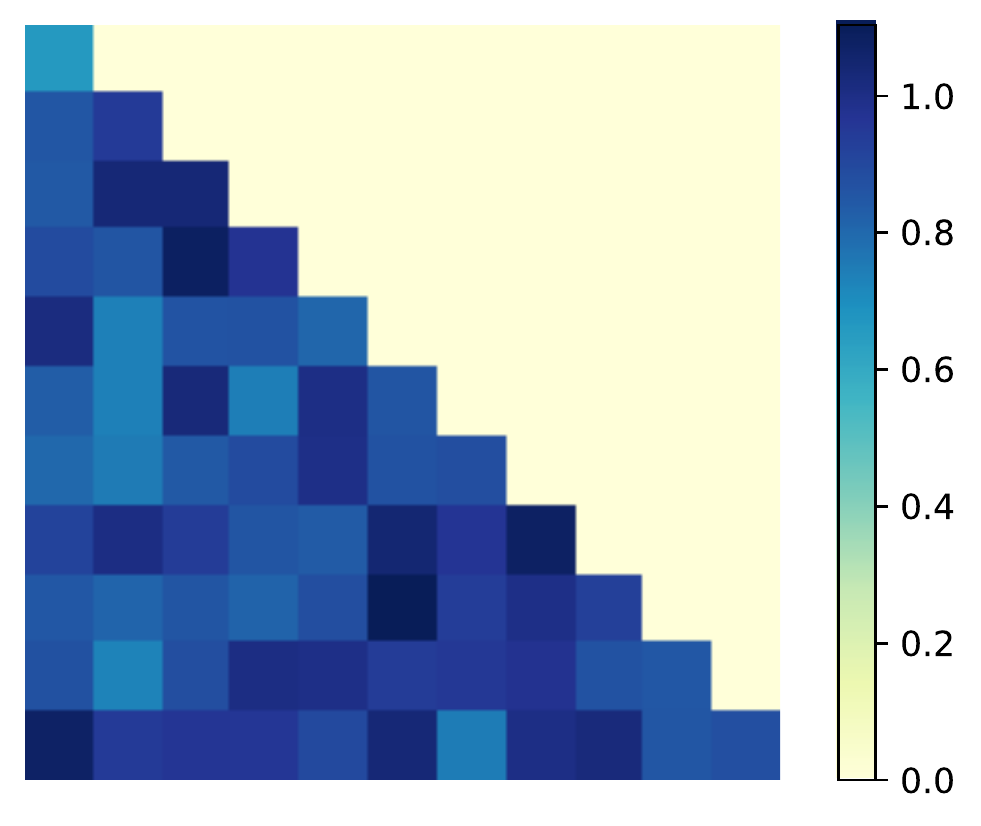}}  
  
    \subfigure{\includegraphics[clip, trim=.05cm .03cm .05cm .03cm, width=0.153\textwidth]{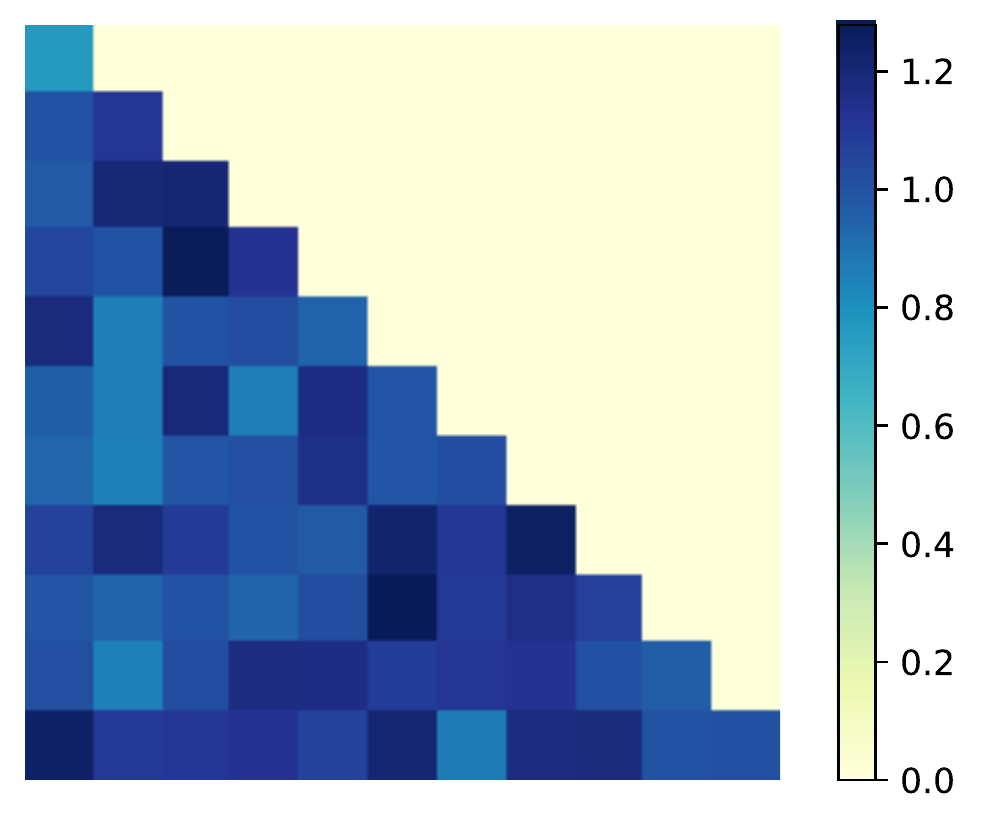}}  \subfigure{\includegraphics[clip, trim=.05cm .03cm .05cm .03cm, width=0.153\textwidth]{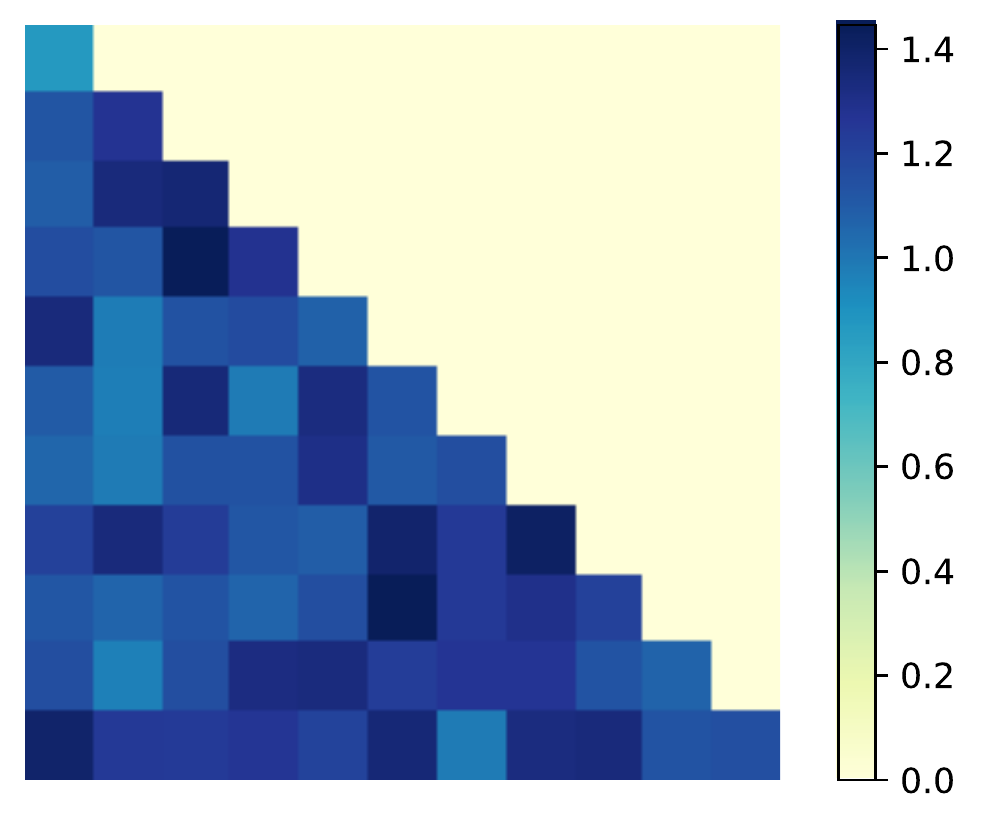}}  \subfigure{\includegraphics[clip, trim=.05cm .03cm .05cm .03cm, width=0.153\textwidth]{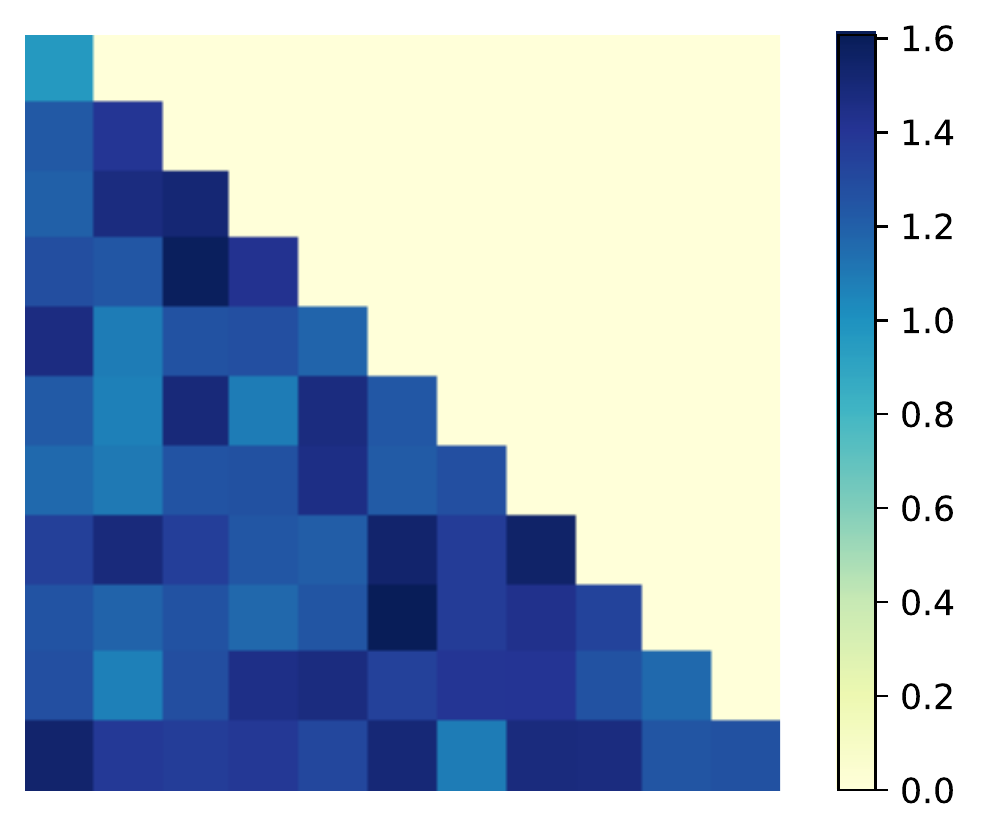}}  \subfigure{\includegraphics[clip, trim=.05cm .03cm .05cm .03cm, width=0.153\textwidth]{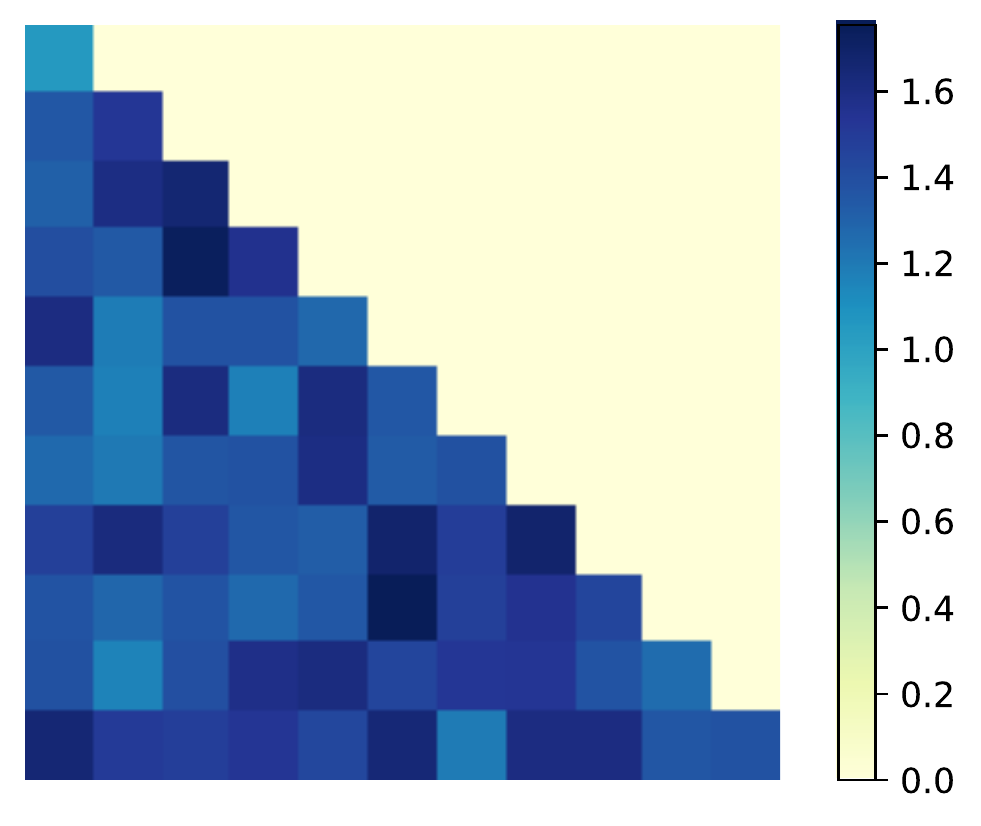}}  \subfigure{\includegraphics[clip, trim=.05cm .03cm .05cm .03cm, width=0.153\textwidth]{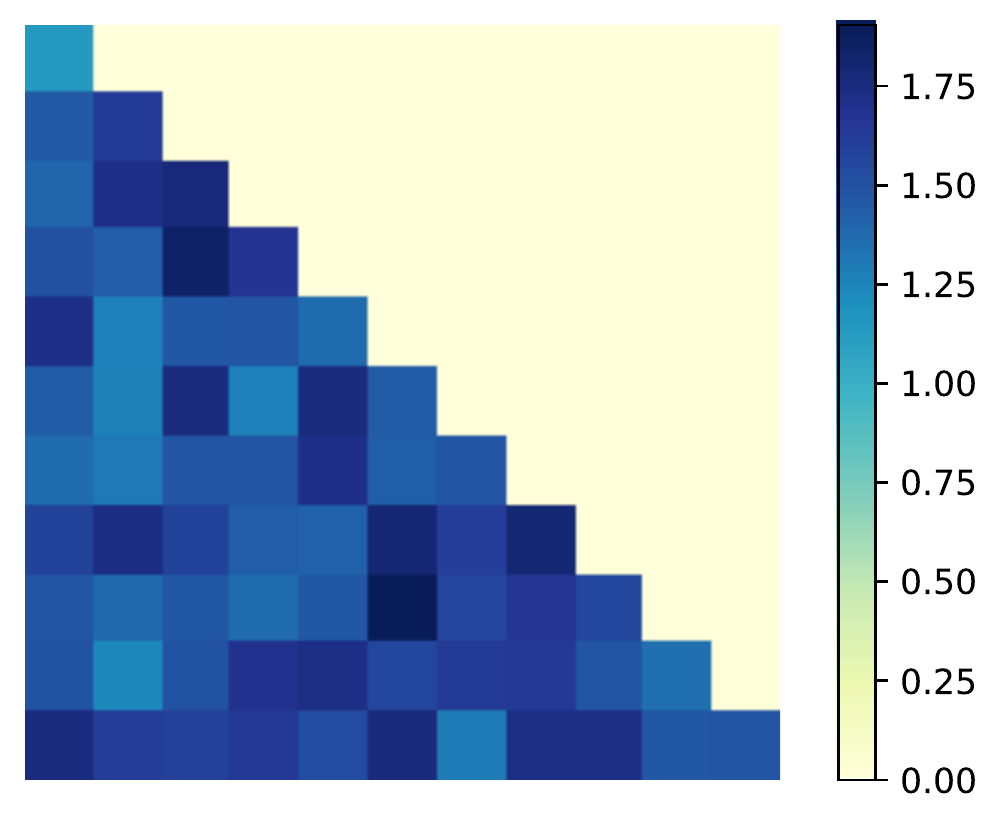}} 
  \vspace{-0.15in}

\caption{Heatmap of estimated Ripley's K function value $\hat K(t)$ for $t=1,\dots,5$ (top), $t=6,\dots,10$ (bottom). For each pixel, the data sequence $D_1$ and $D_2$ are generated from the same distribution as in Figure~\ref{exp1_3} (a).}\label{exp1_comparison}
\vspace{-.05in}
\end{figure}

\textbf{Algorithmic behavior of \texttt{Exp GD} method on \texttt{Exp} data.} In our experiment, we saw a very interesting phenomenon --- no matter where we initialize $\hat \alpha$, using GD to maximize log-likelihood under correct model specification would yield very biased estimate. 

As illustrated in Figure~\ref{fig:overfitting}, we observe that when $\hat \alpha$ is around the ground-truth 1, the log-likelihood is very large. But it keeps growing larger when $\hat \alpha$ keeps decreasing. The same is also true for $\hat \beta$. We can see that even though we got very large log-likelihood, the estimate is very biased. Clearly, overfitting occurs here --- we only gain very little log-likelihood increment but the estimates are getting futther away from the ground-truth. Therefore, using log-likelihood as GOF would be questionable.

\begin{figure}[htp]
  \centering
\subfigure{\includegraphics[scale=0.35]{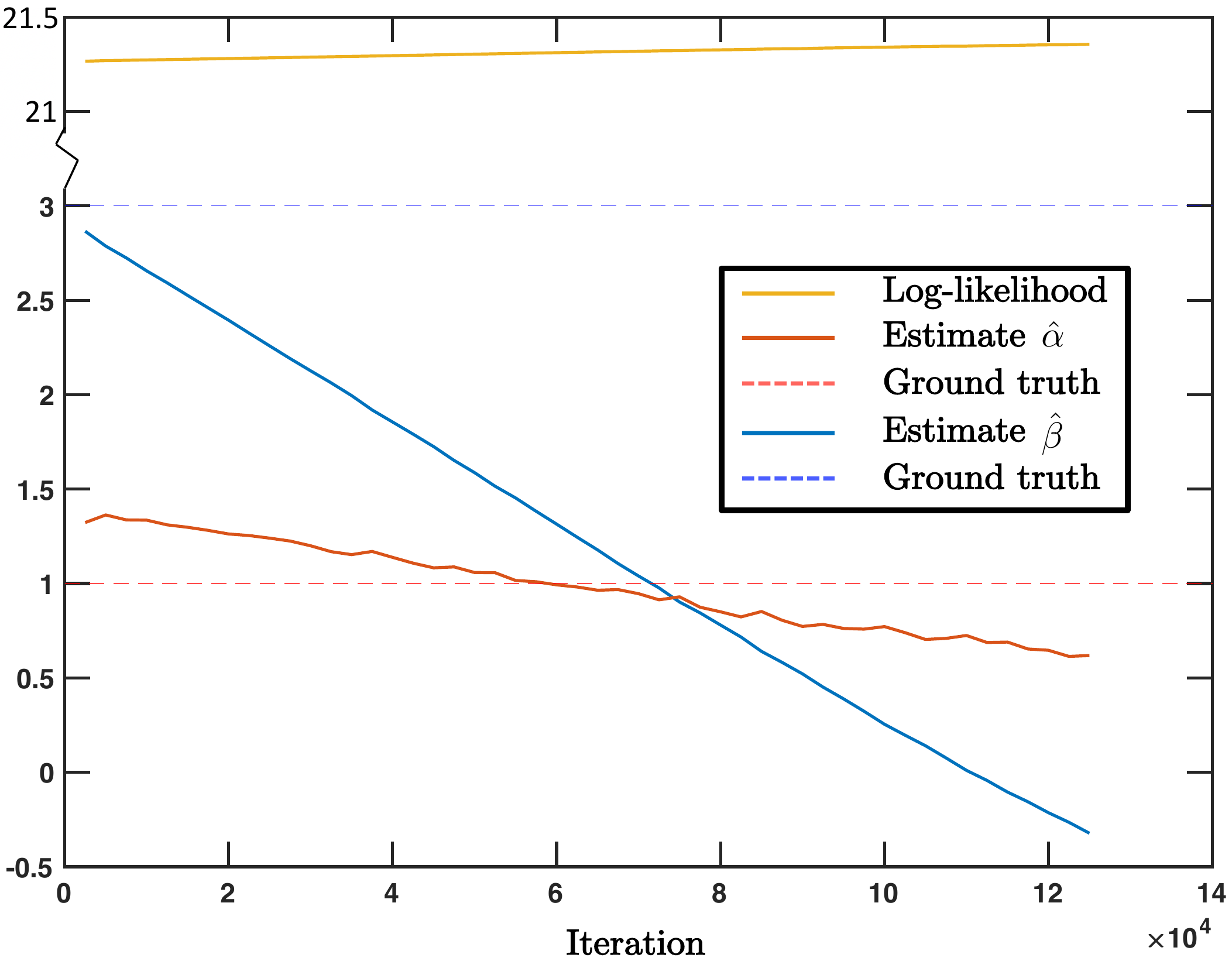}}
\caption{Algorithmic behavior illustration of \texttt{Exp GD} method on \texttt{Exp} data.}\label{fig:overfitting}
\end{figure}

\textbf{Probability weighted histogram estimation under $H_1$.} As one may see from our proof in Appendix~\ref{all_asym},  GS and GW tests are asymptotically equivalent. One would ask why we choose GS test over GW test. 
The reason is two-fold. Firstly, it is not computationally efficient, since using GW test involves estimating $r$ more parameters. Secondly and most importantly, its power is far less than GS test. That's because, in empirical study, the QMLE $\Tilde{\theta}_{QMLE}$ does maximize the full model Quasi-likelihood but fails to differentiate two different triggering components, which makes $\norm{h(\Tilde{\theta}_{QMLE})}_2$ much smaller than $\norm{h(\theta_0)}_2$. We will further illustrate this by performing the estimation of the full model (Algorithm~\ref{algo2} in next section) and visualizing the estimation of triggering function as follows:

\begin{figure}[!htp]
  \centering
\subfigure{\includegraphics[scale=0.2]{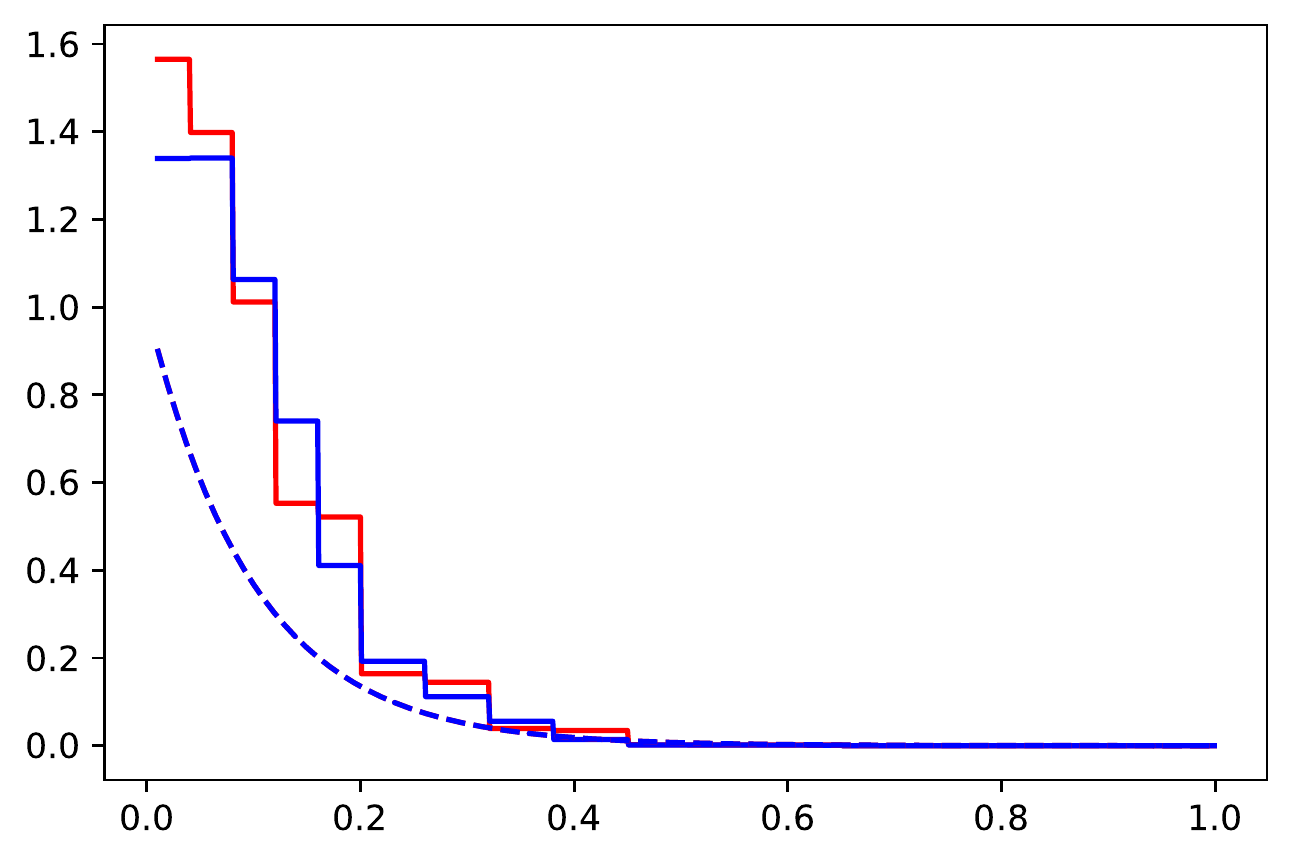}}
\subfigure{\includegraphics[scale=0.2]{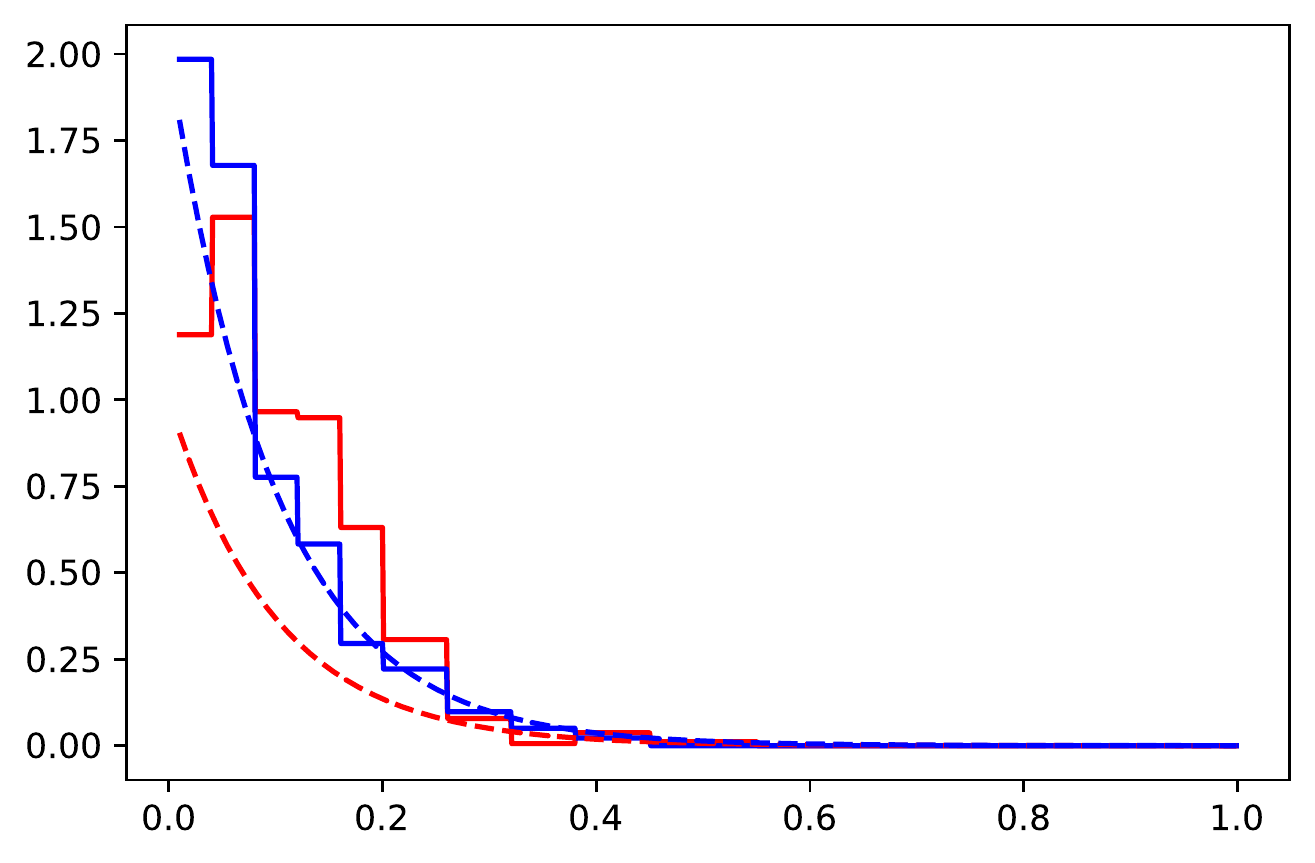}}
\subfigure{\includegraphics[scale=0.2]{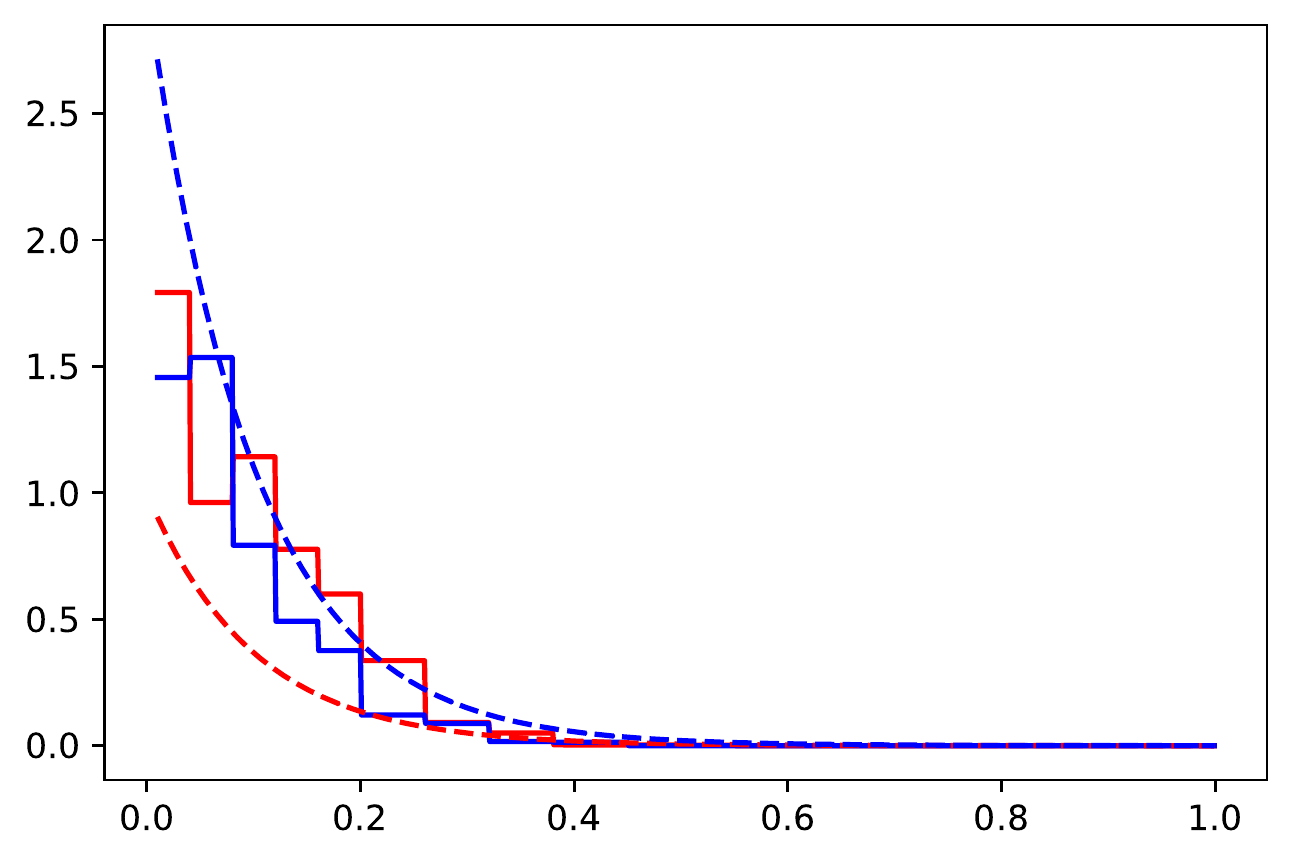}}
\subfigure{\includegraphics[scale=0.2]{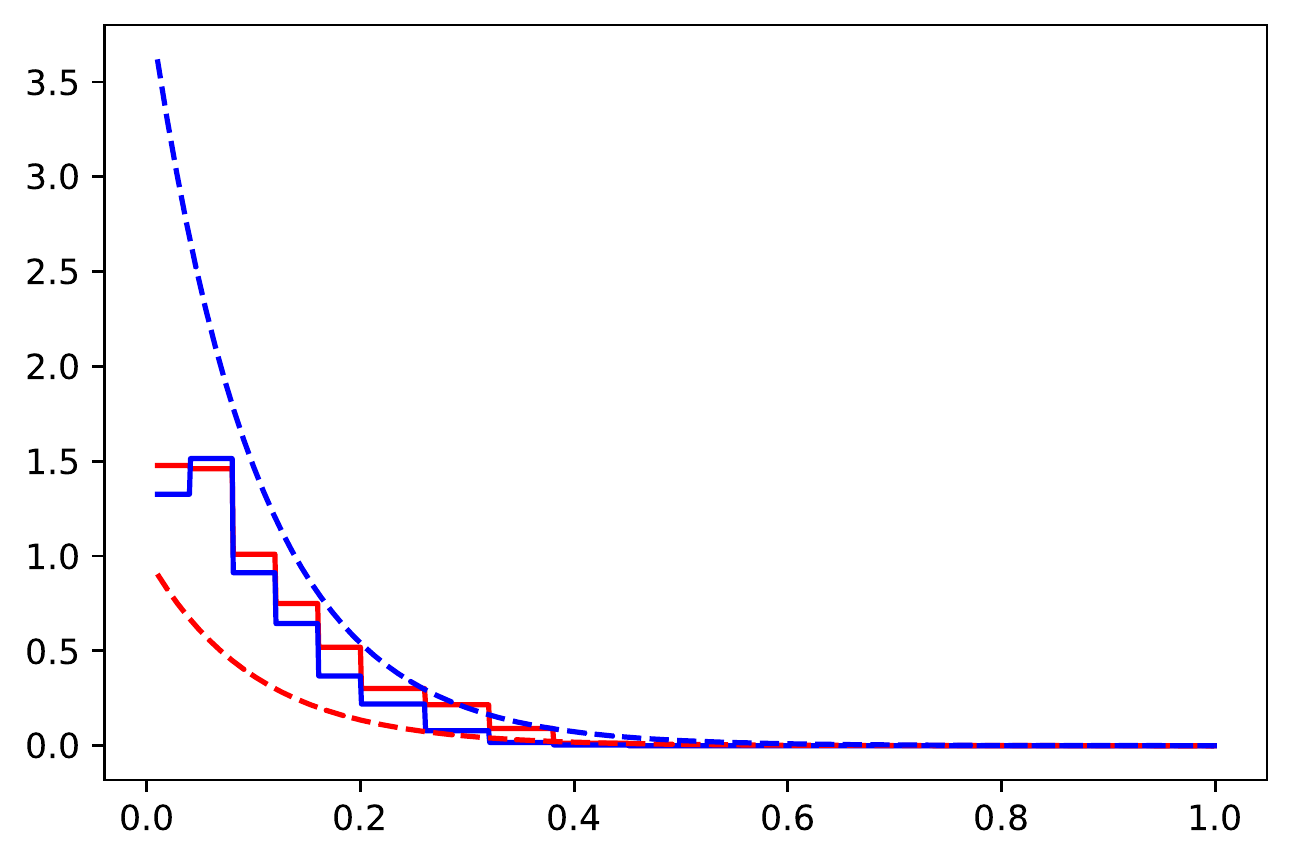}}
\subfigure{\includegraphics[scale=0.2]{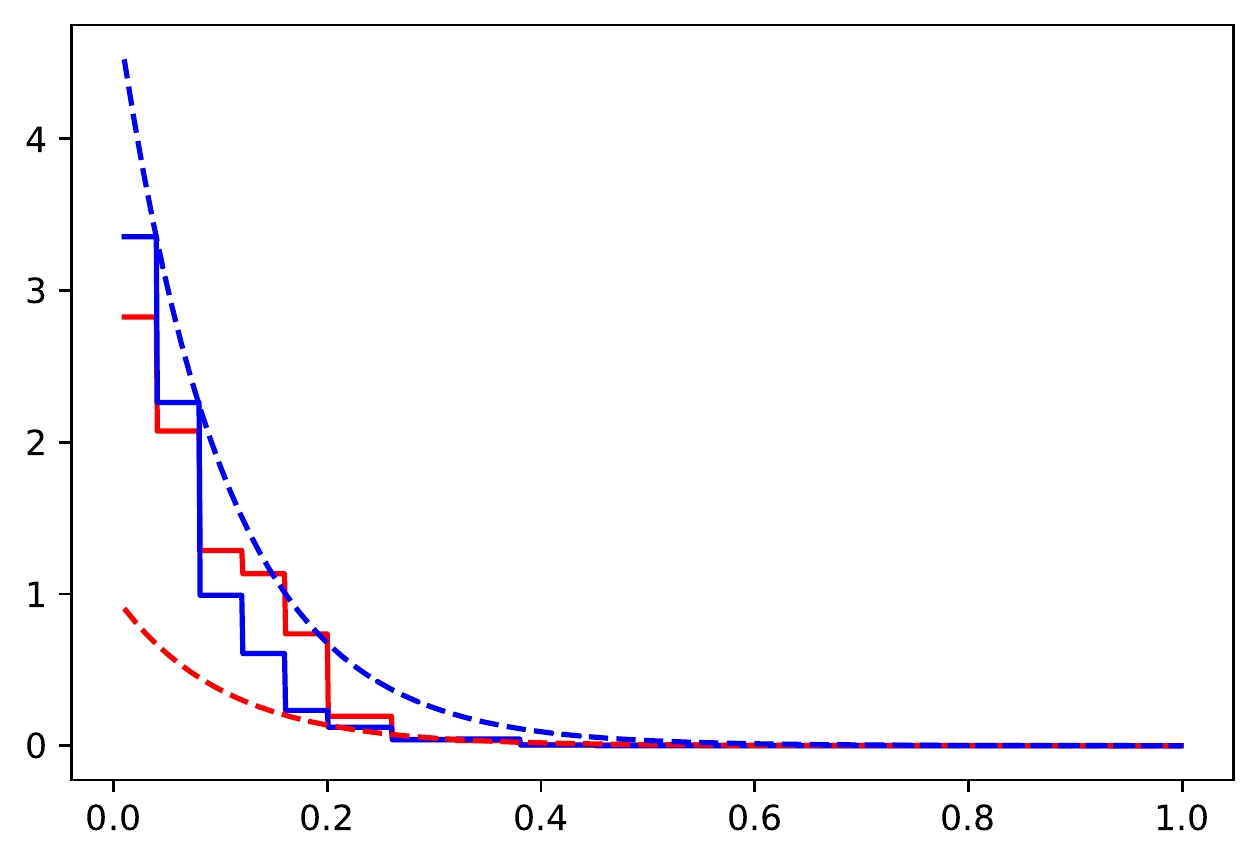}}
 
\subfigure{\includegraphics[scale=0.2]{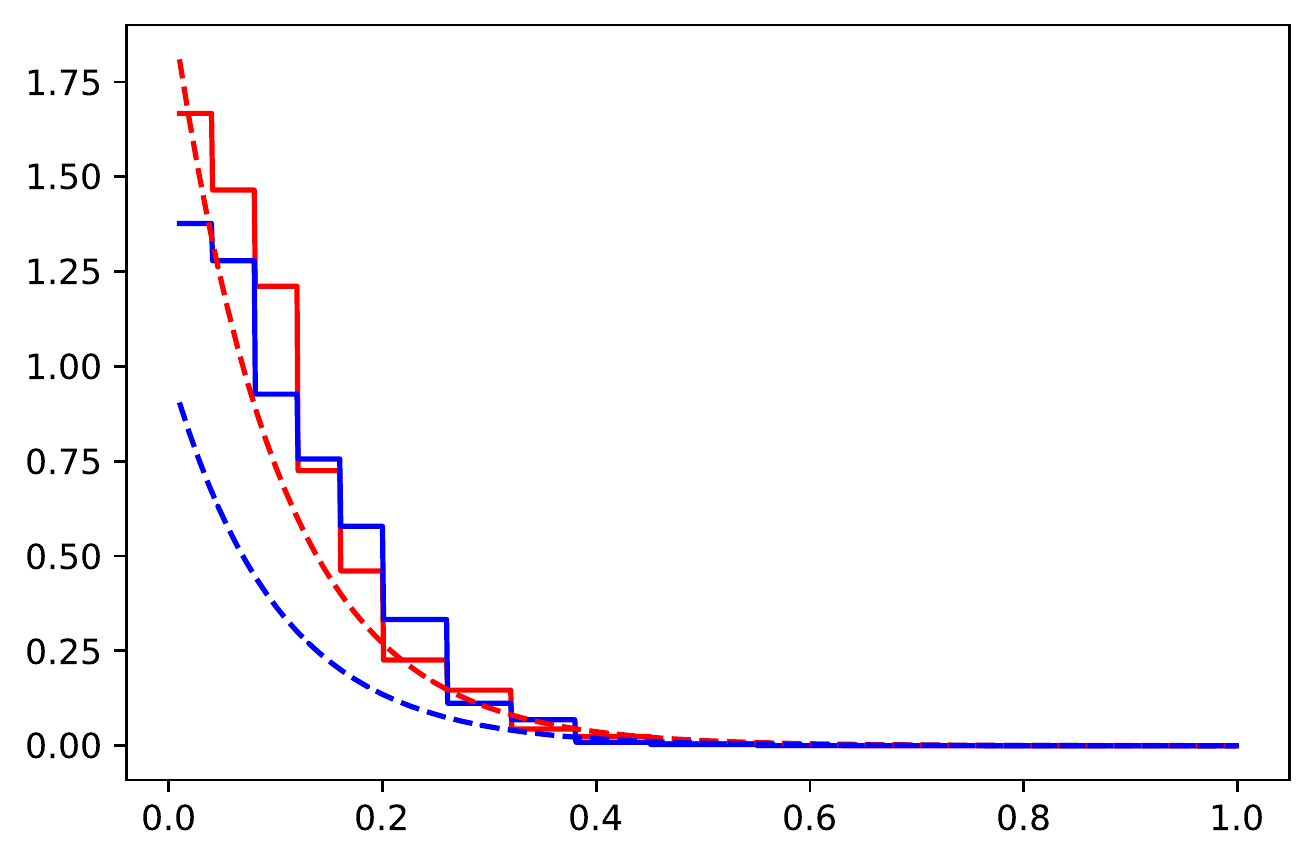}}
\subfigure{\includegraphics[scale=0.2]{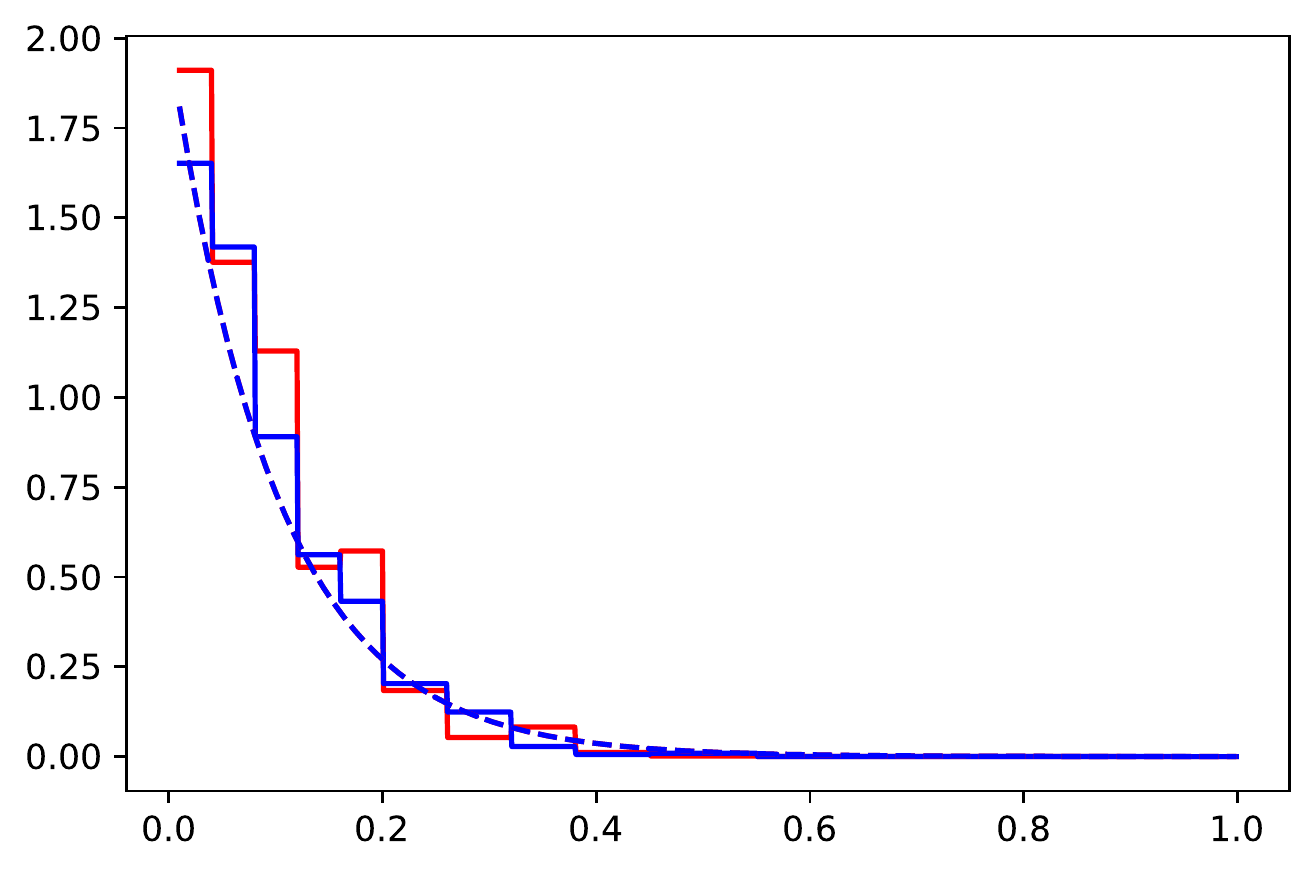}}
\subfigure{\includegraphics[scale=0.2]{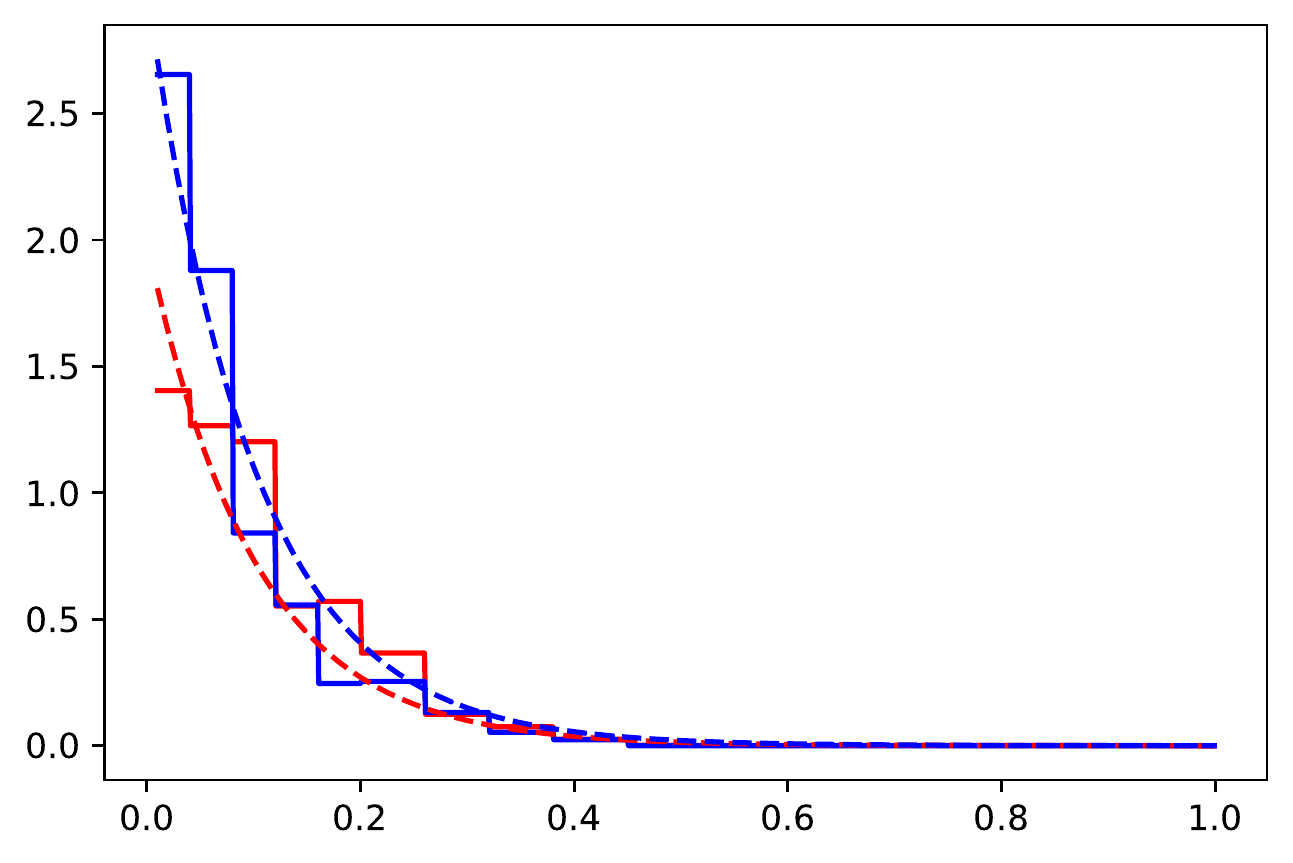}}
\subfigure{\includegraphics[scale=0.2]{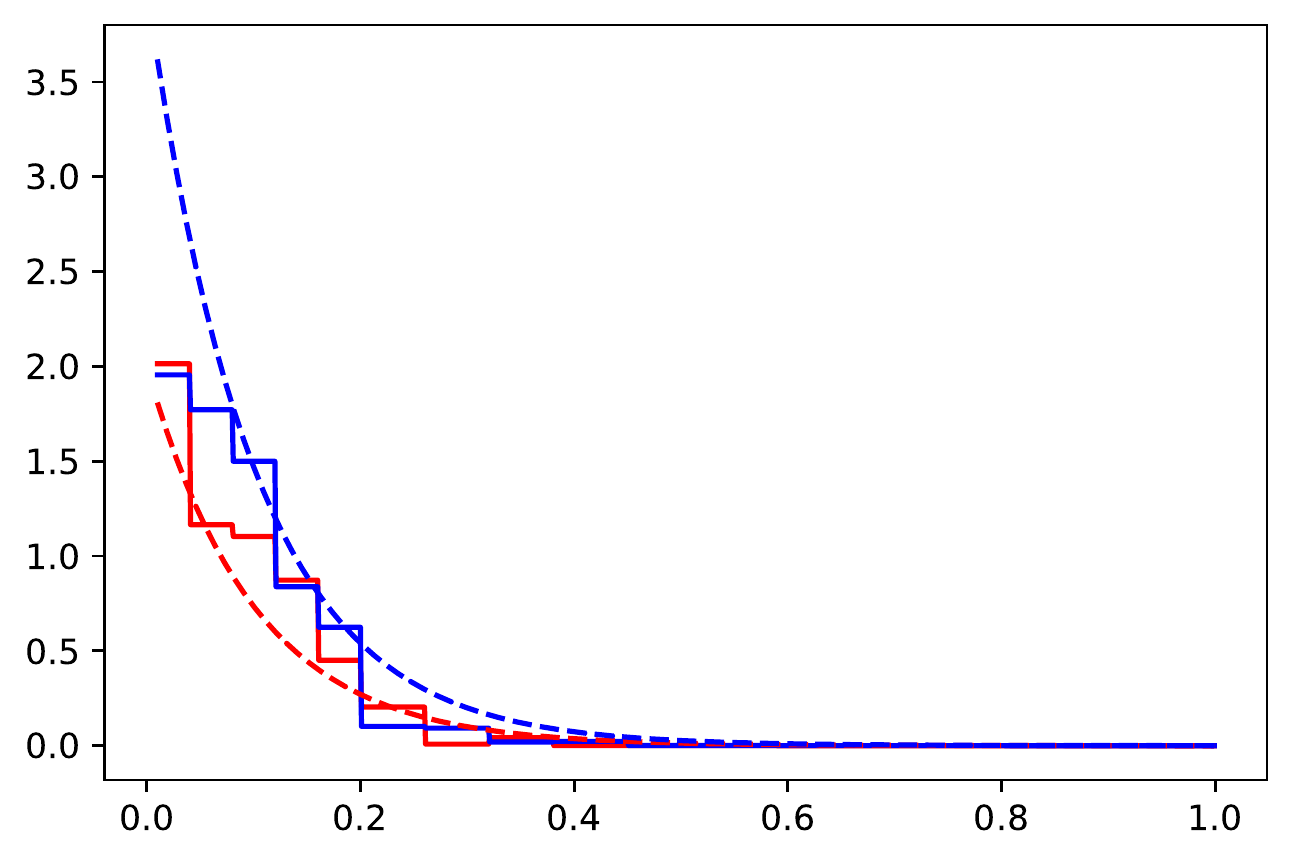}}
\subfigure{\includegraphics[scale=0.2]{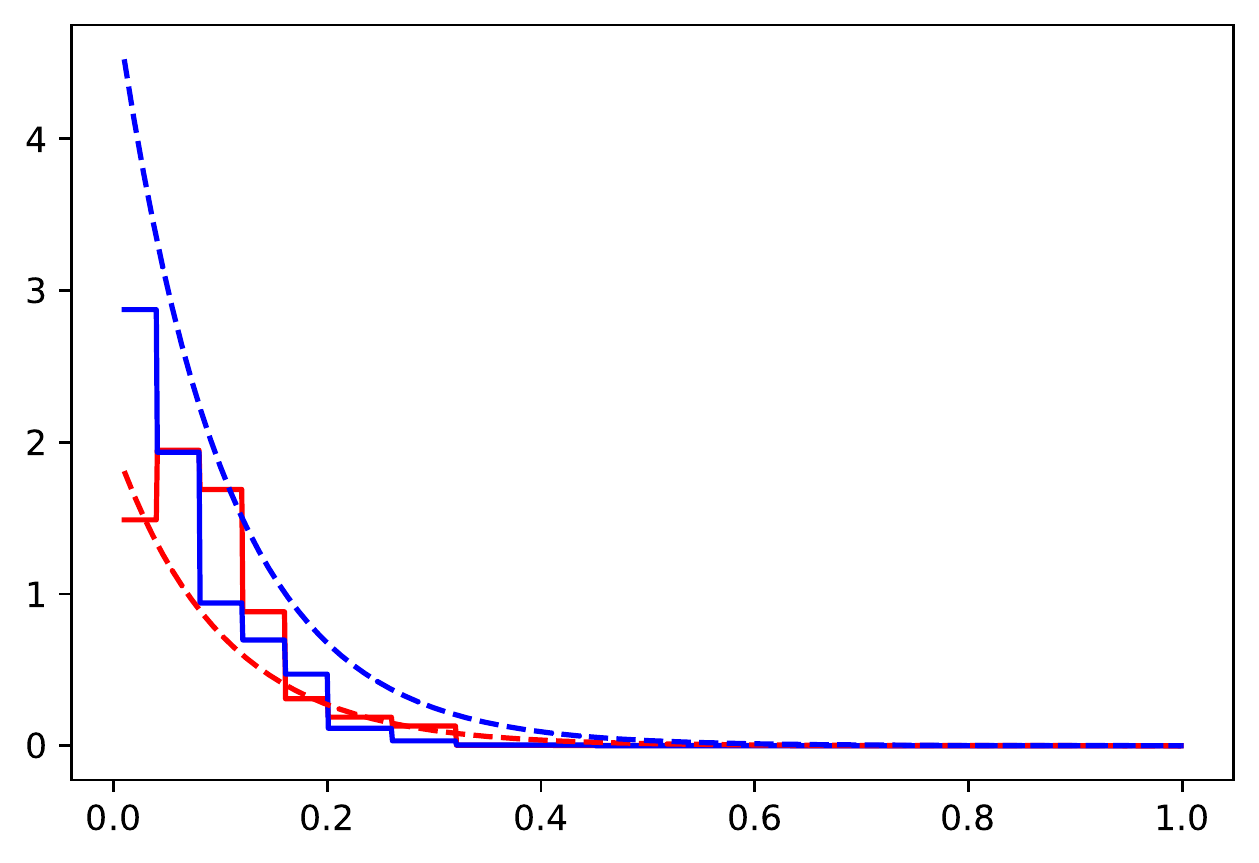}}
 
\subfigure{\includegraphics[scale=0.2]{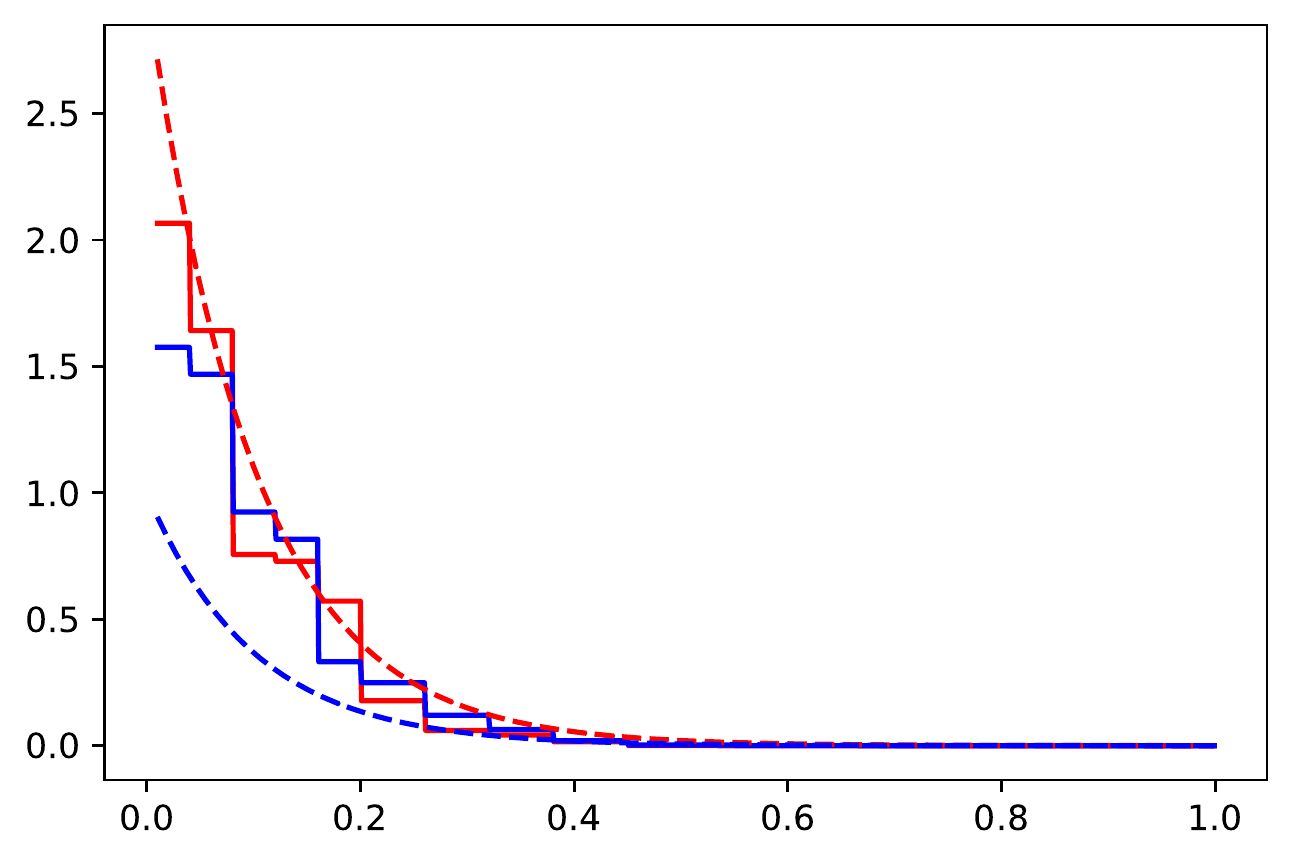}}
\subfigure{\includegraphics[scale=0.2]{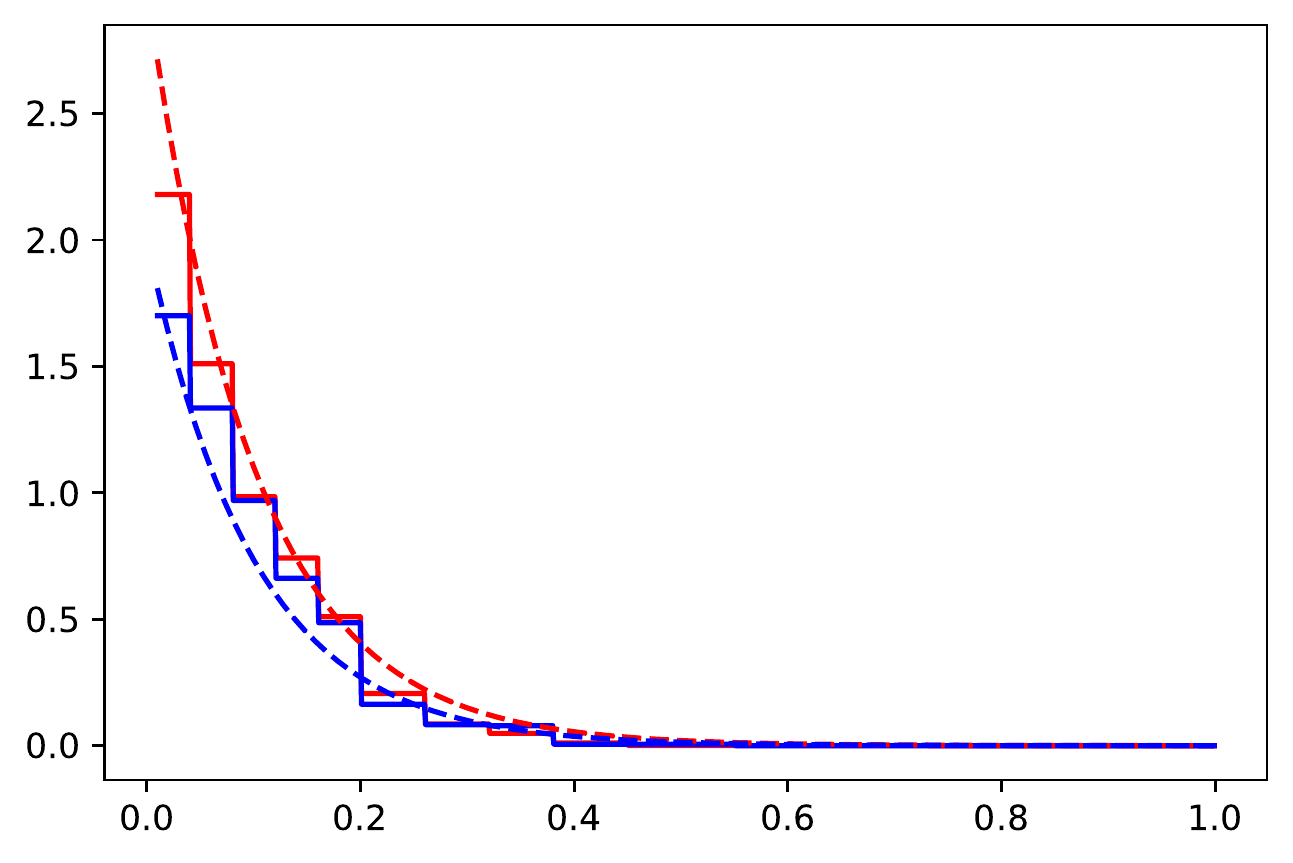}}
\subfigure{\includegraphics[scale=0.2]{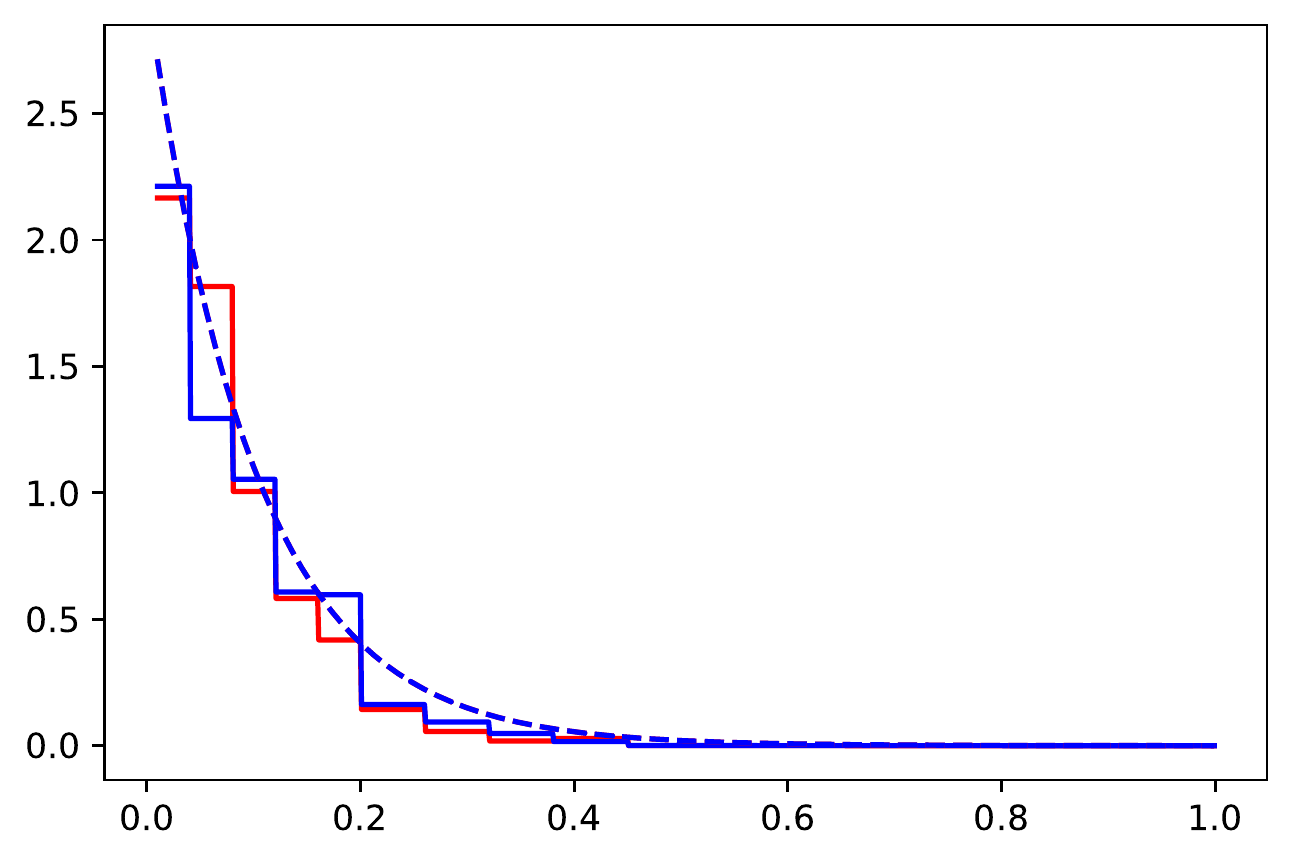}}
\subfigure{\includegraphics[scale=0.2]{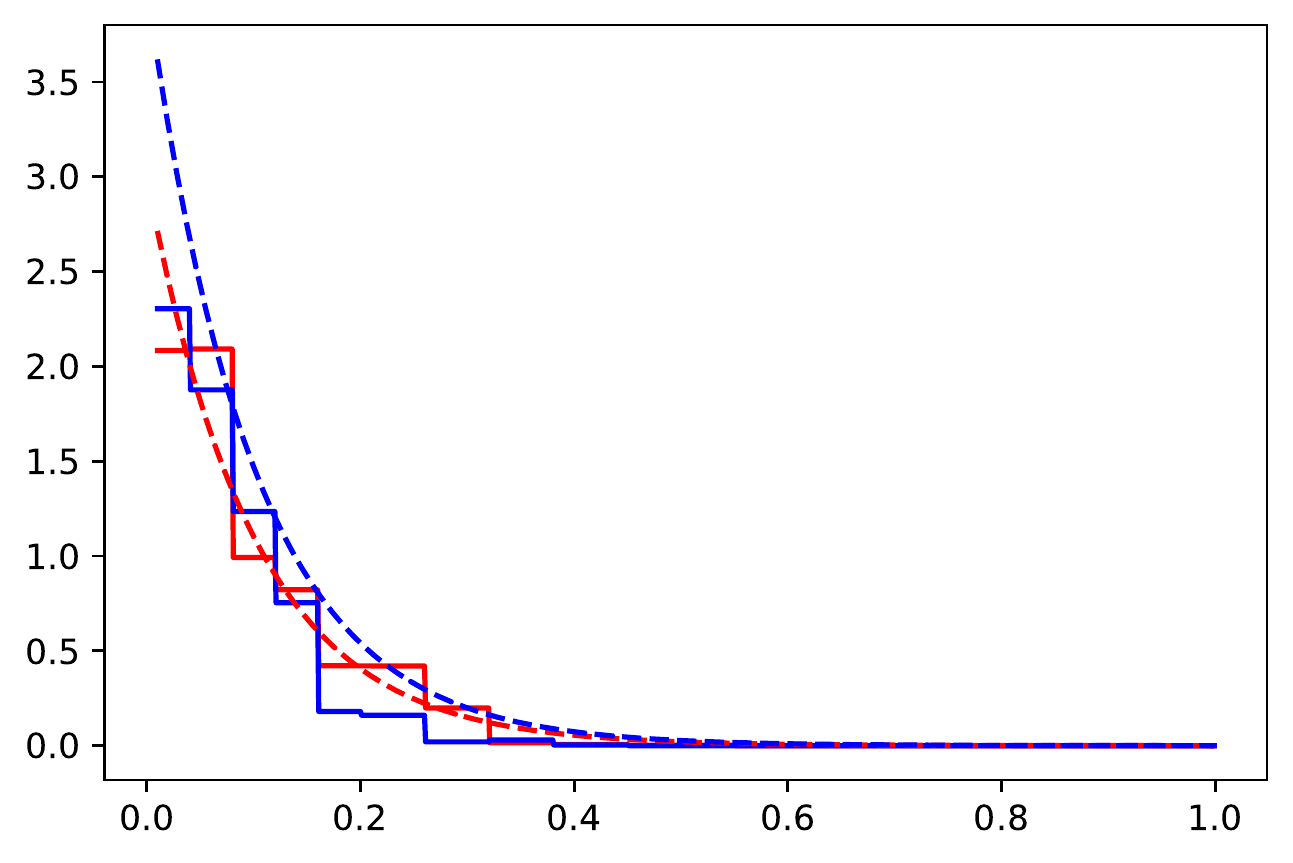}}
\subfigure{\includegraphics[scale=0.2]{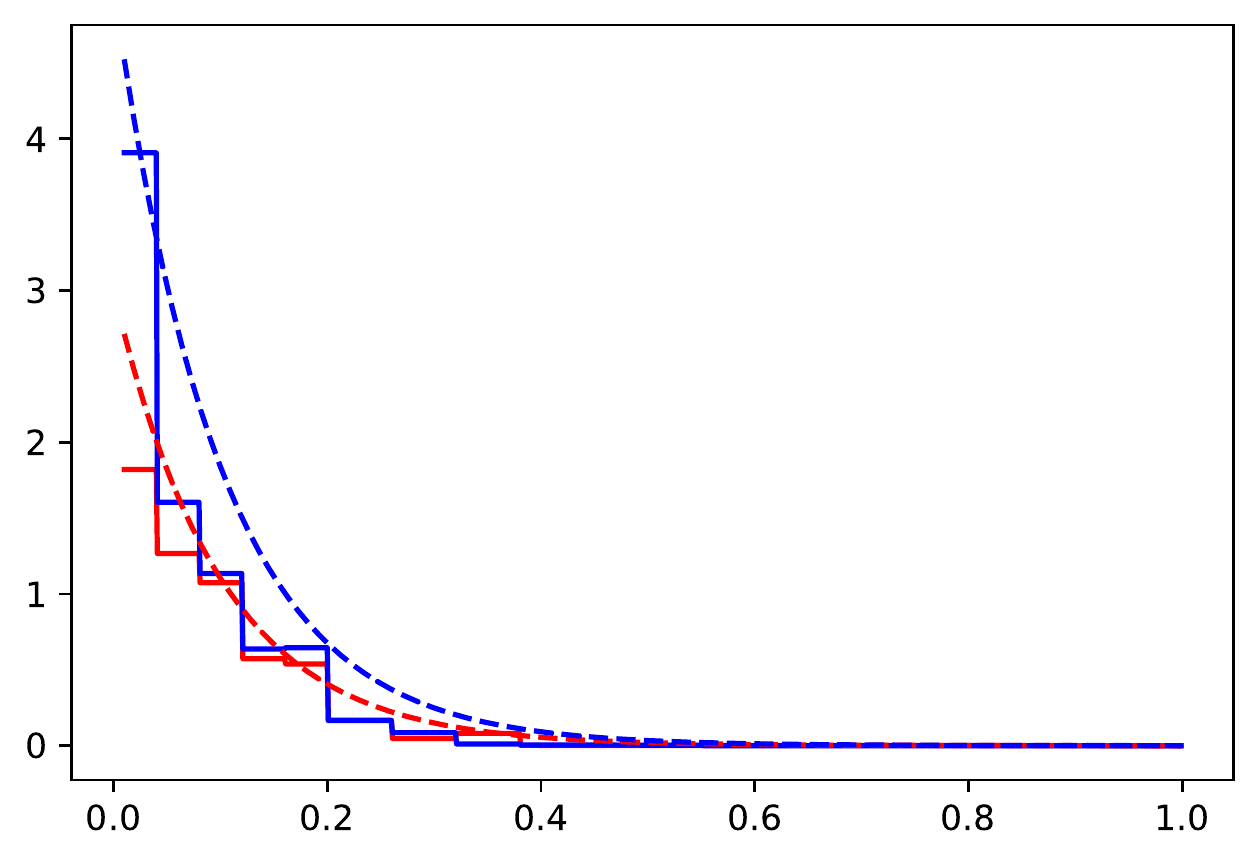}}
 
\subfigure{\includegraphics[scale=0.2]{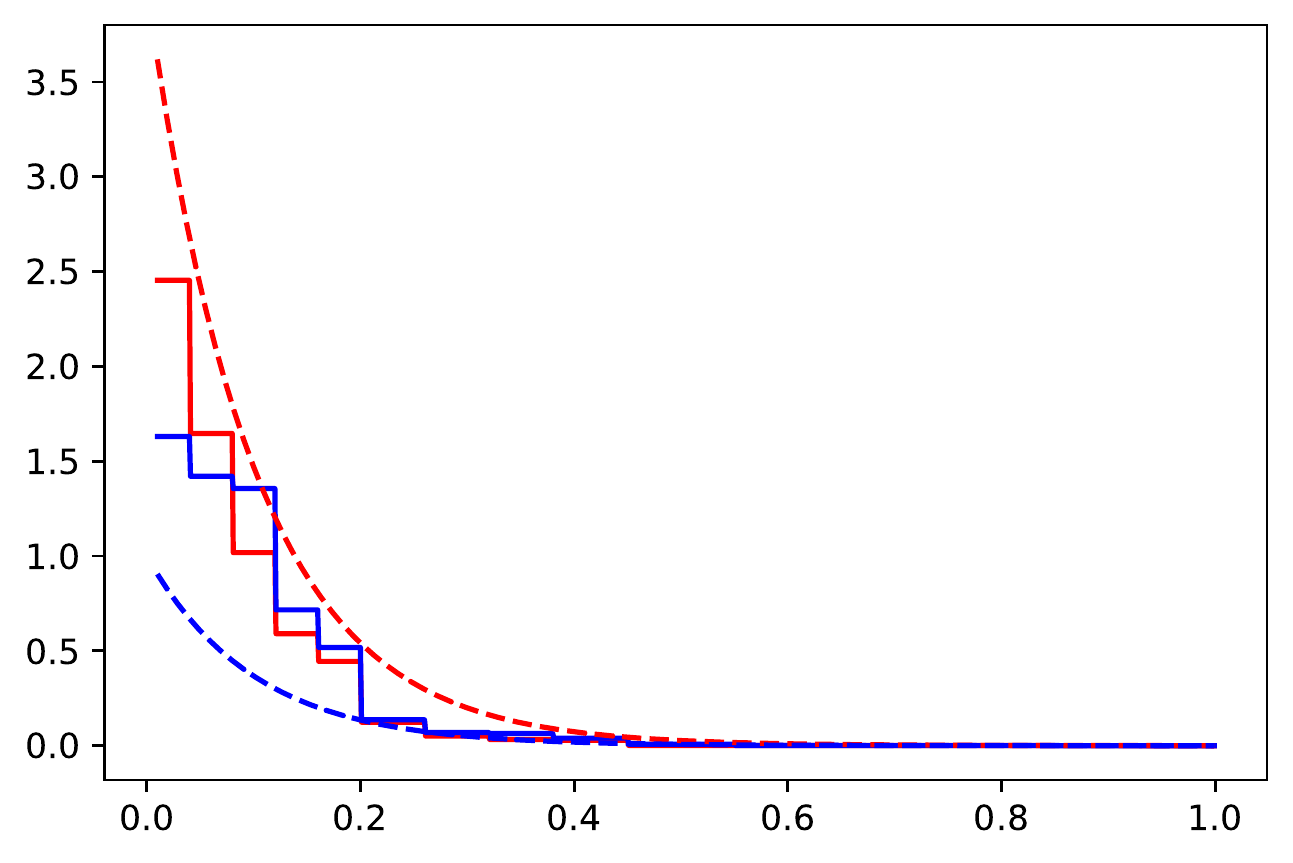}}
\subfigure{\includegraphics[scale=0.2]{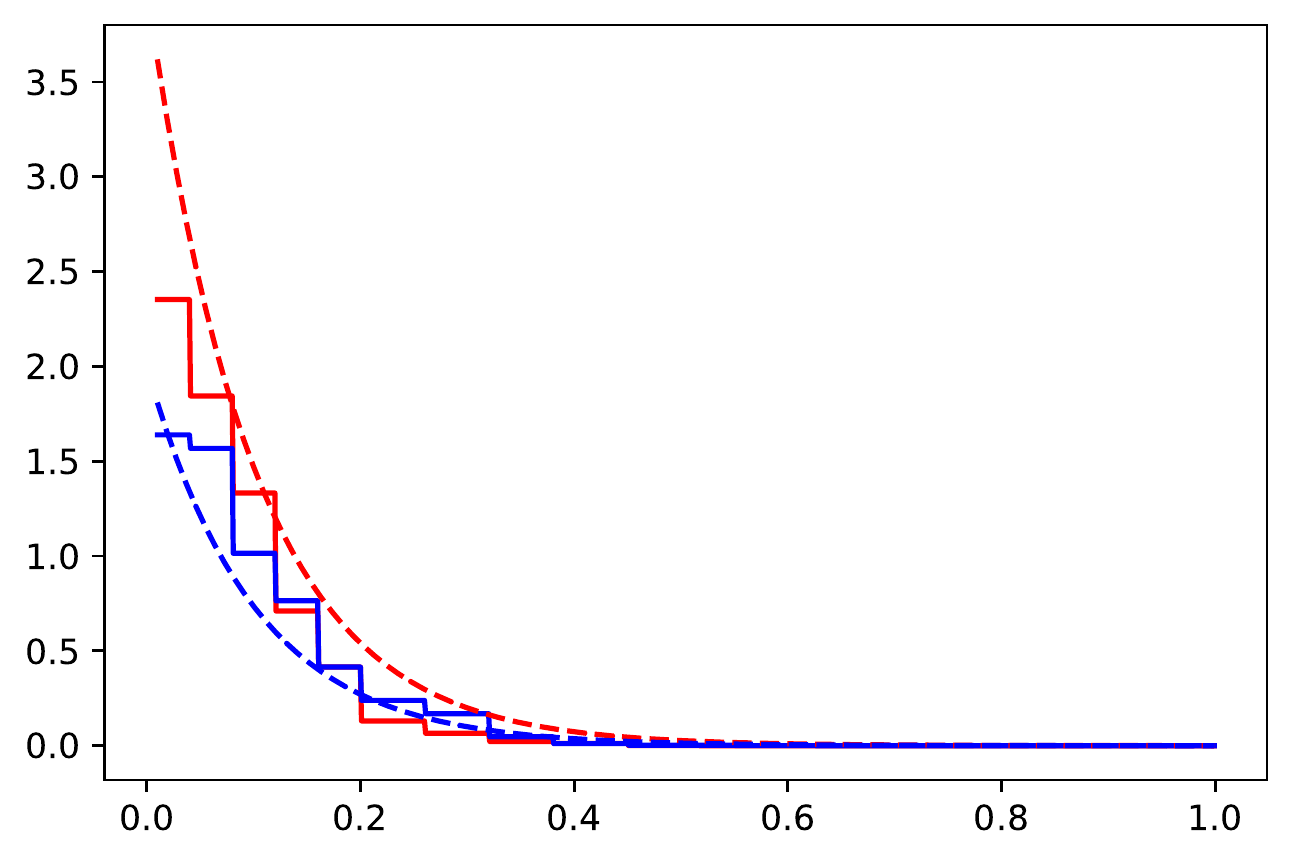}}
\subfigure{\includegraphics[scale=0.2]{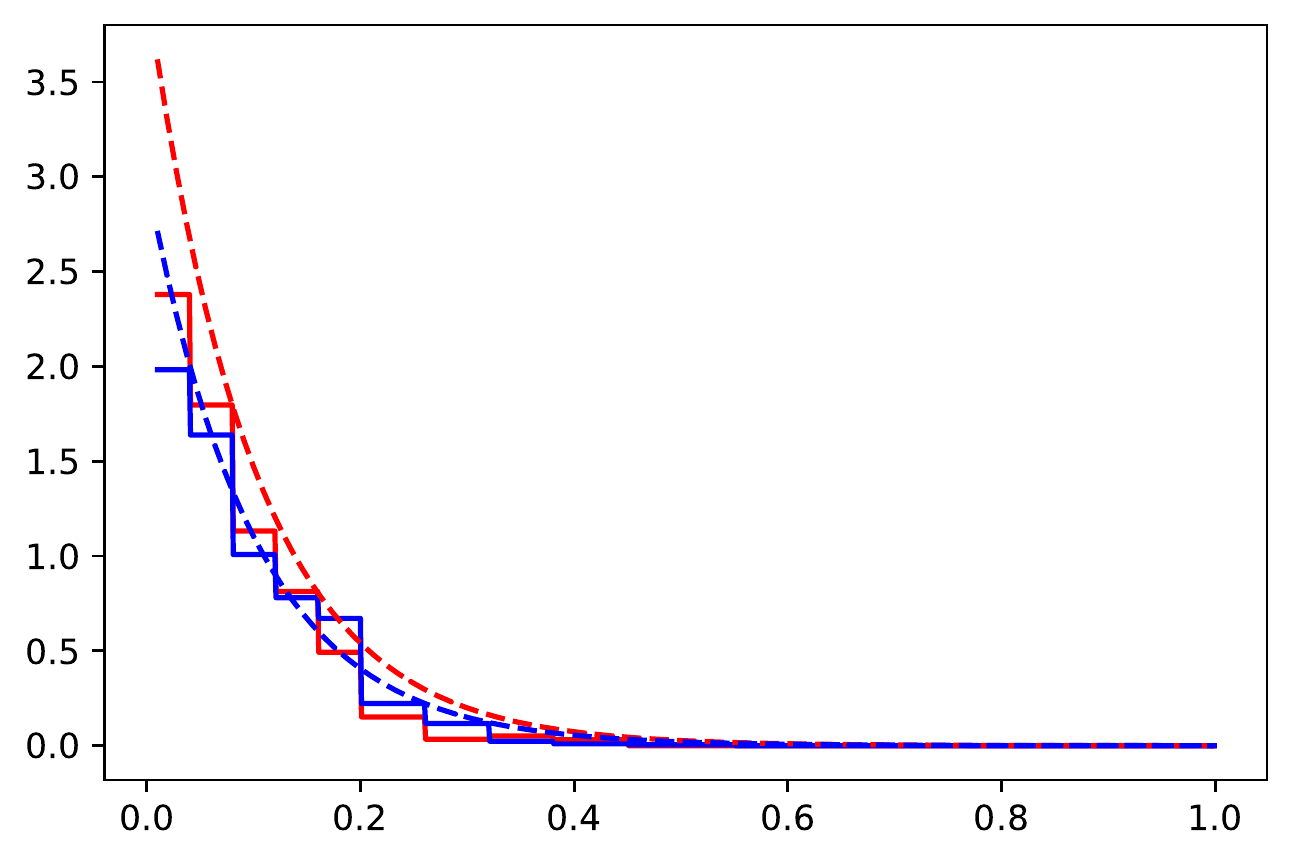}}
\subfigure{\includegraphics[scale=0.2]{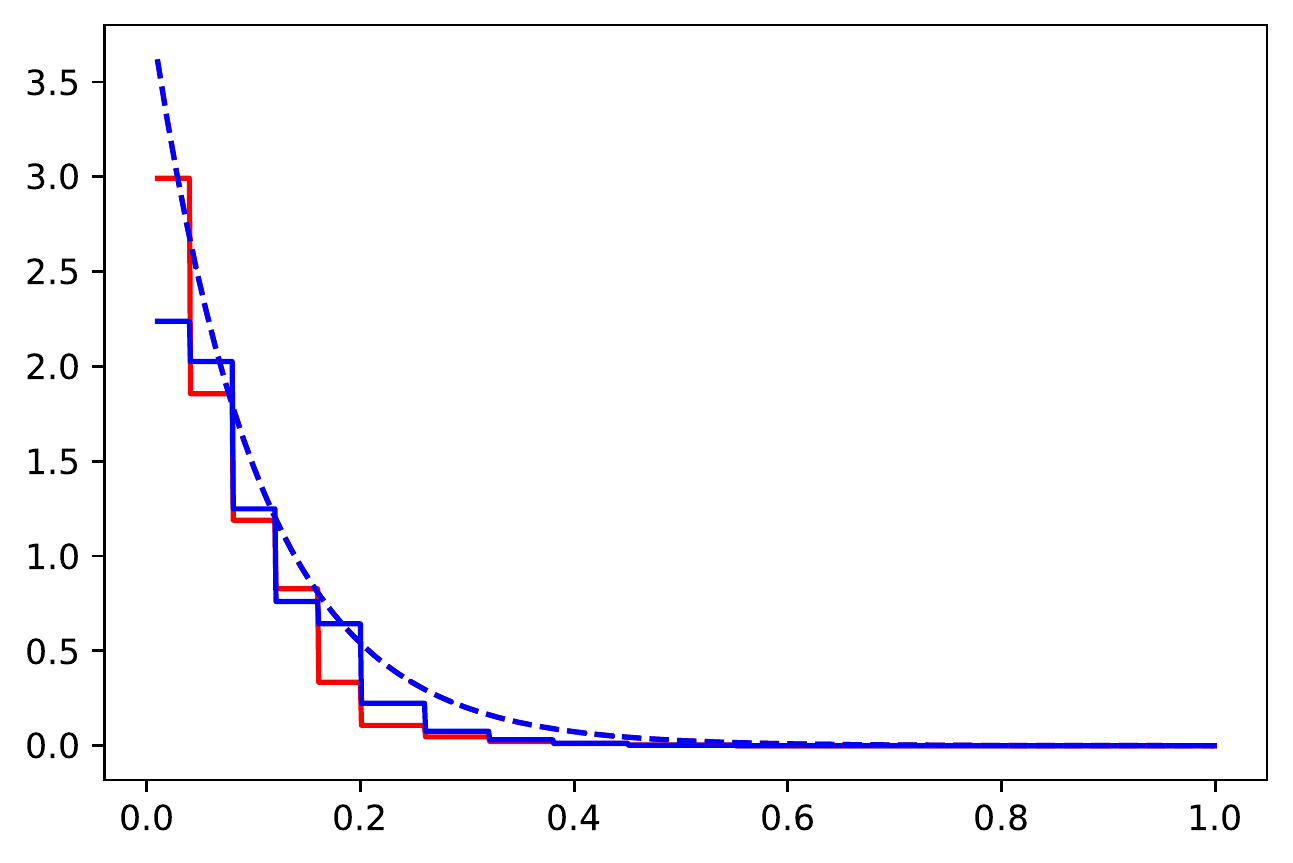}}
\subfigure{\includegraphics[scale=0.2]{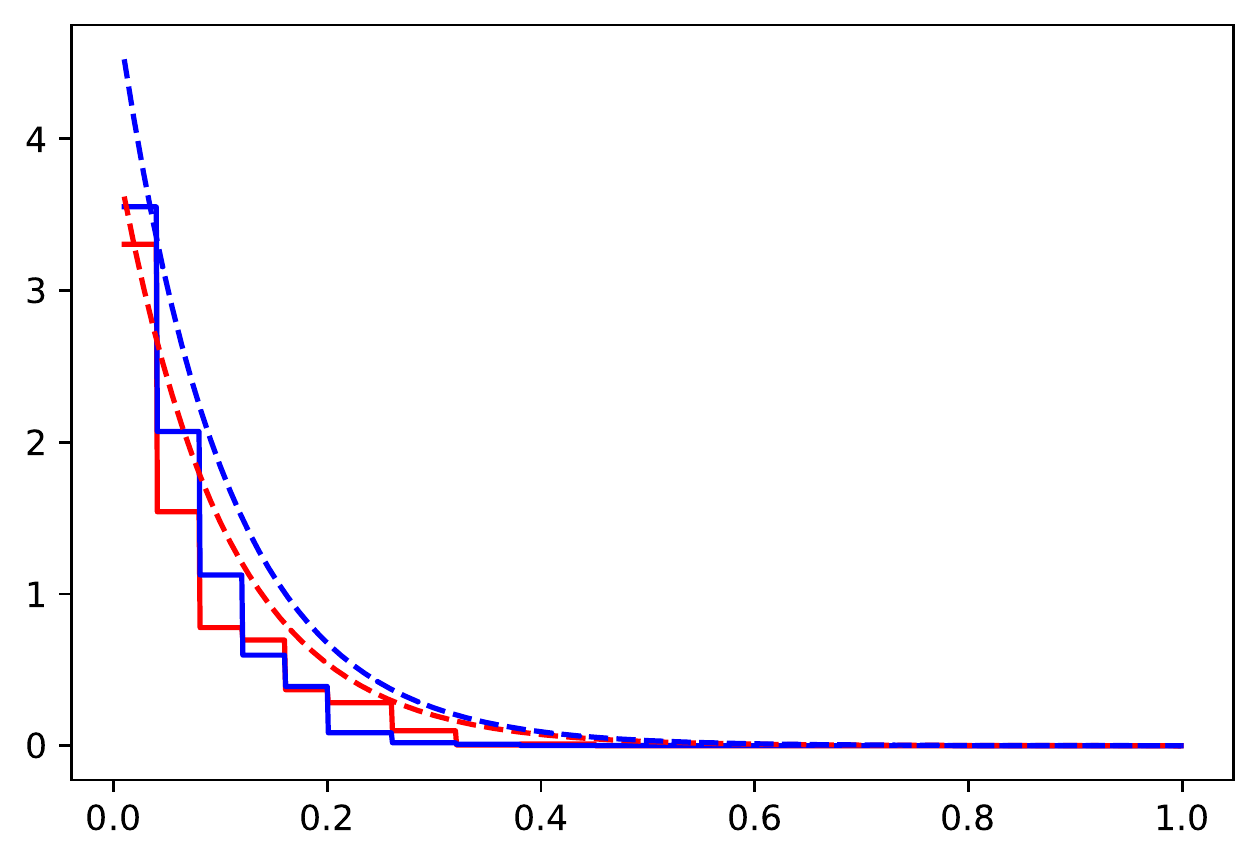}}
 
\subfigure{\includegraphics[scale=0.2]{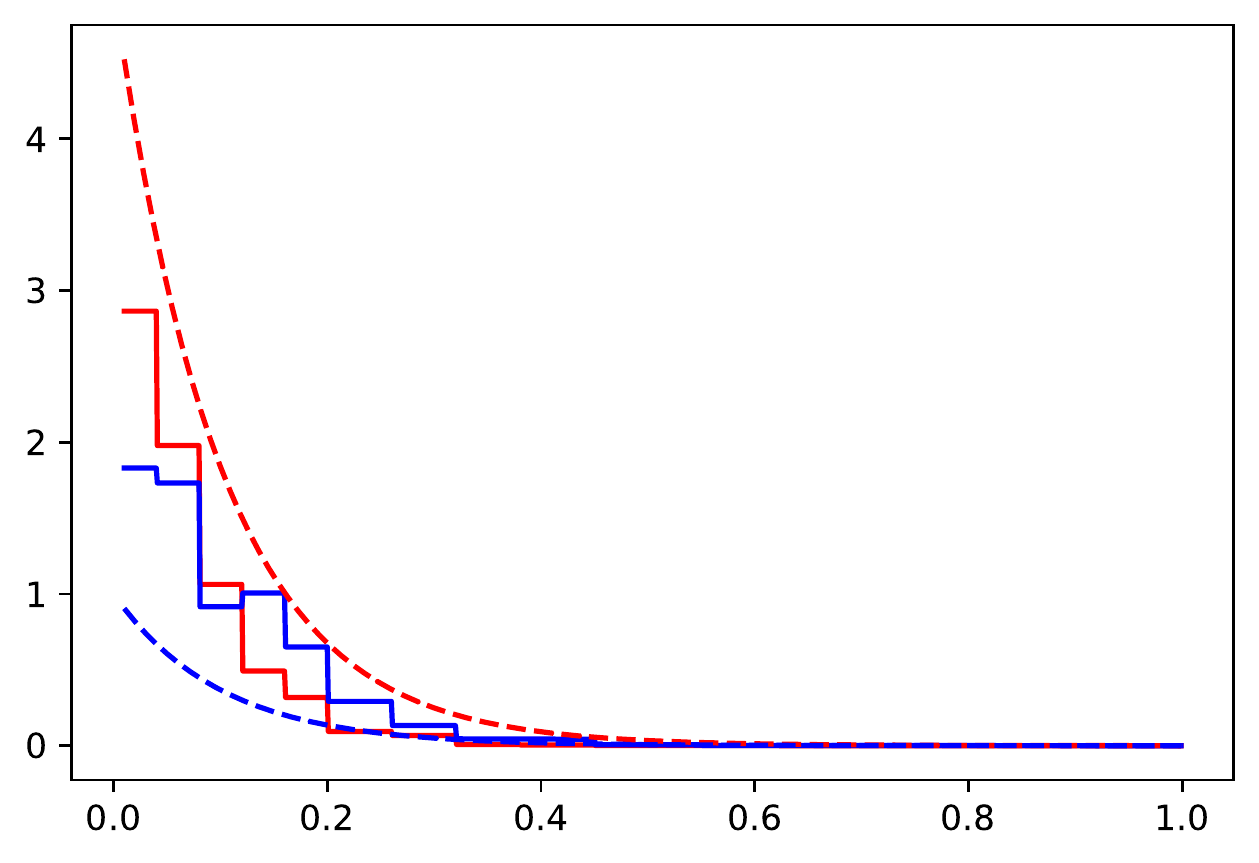}}
\subfigure{\includegraphics[scale=0.2]{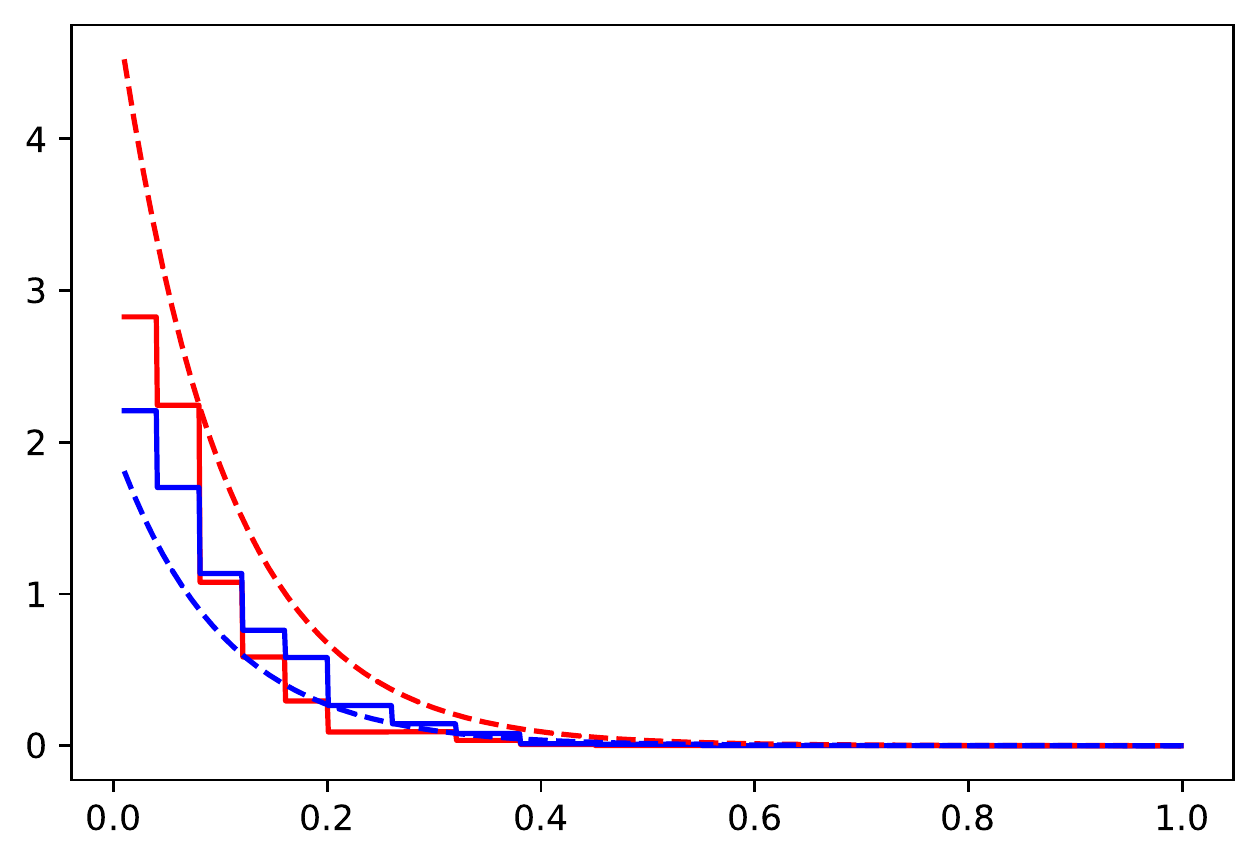}}
\subfigure{\includegraphics[scale=0.2]{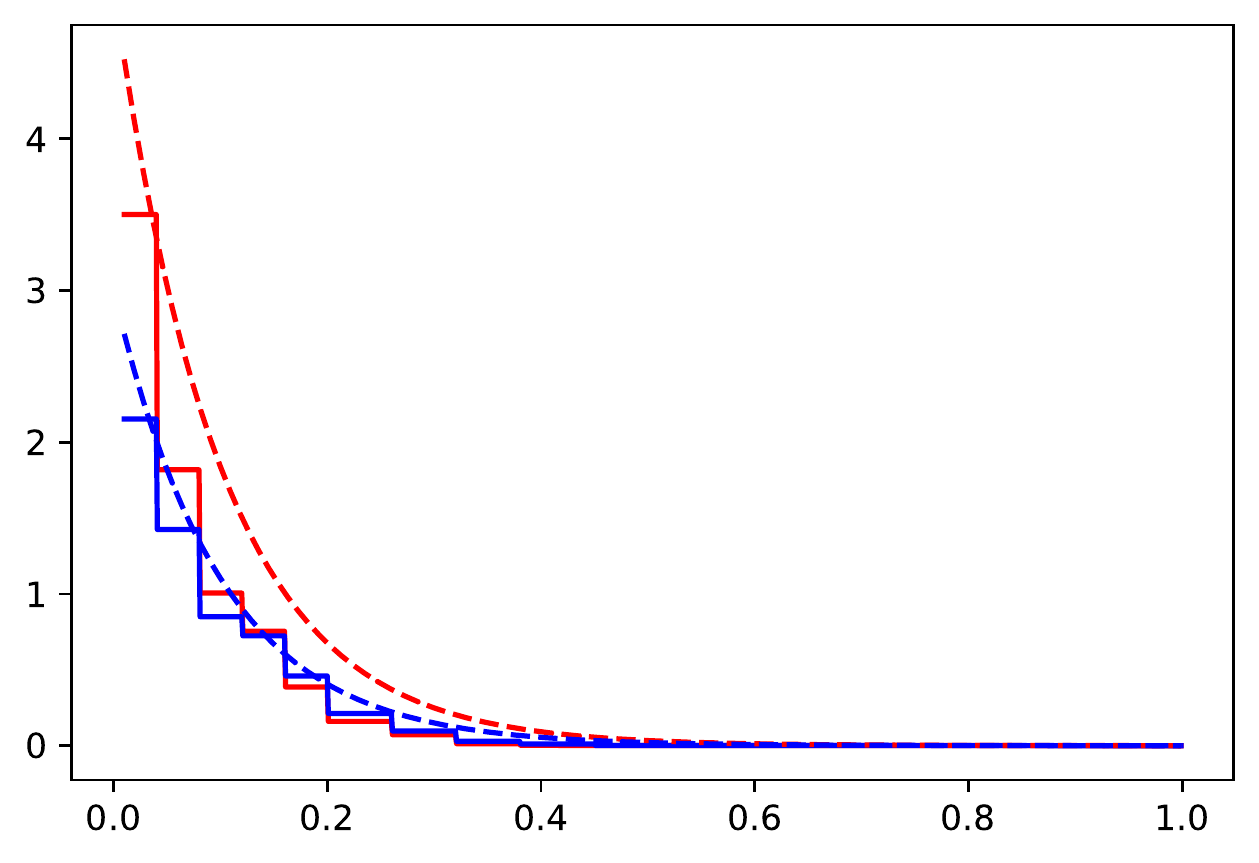}}
\subfigure{\includegraphics[scale=0.2]{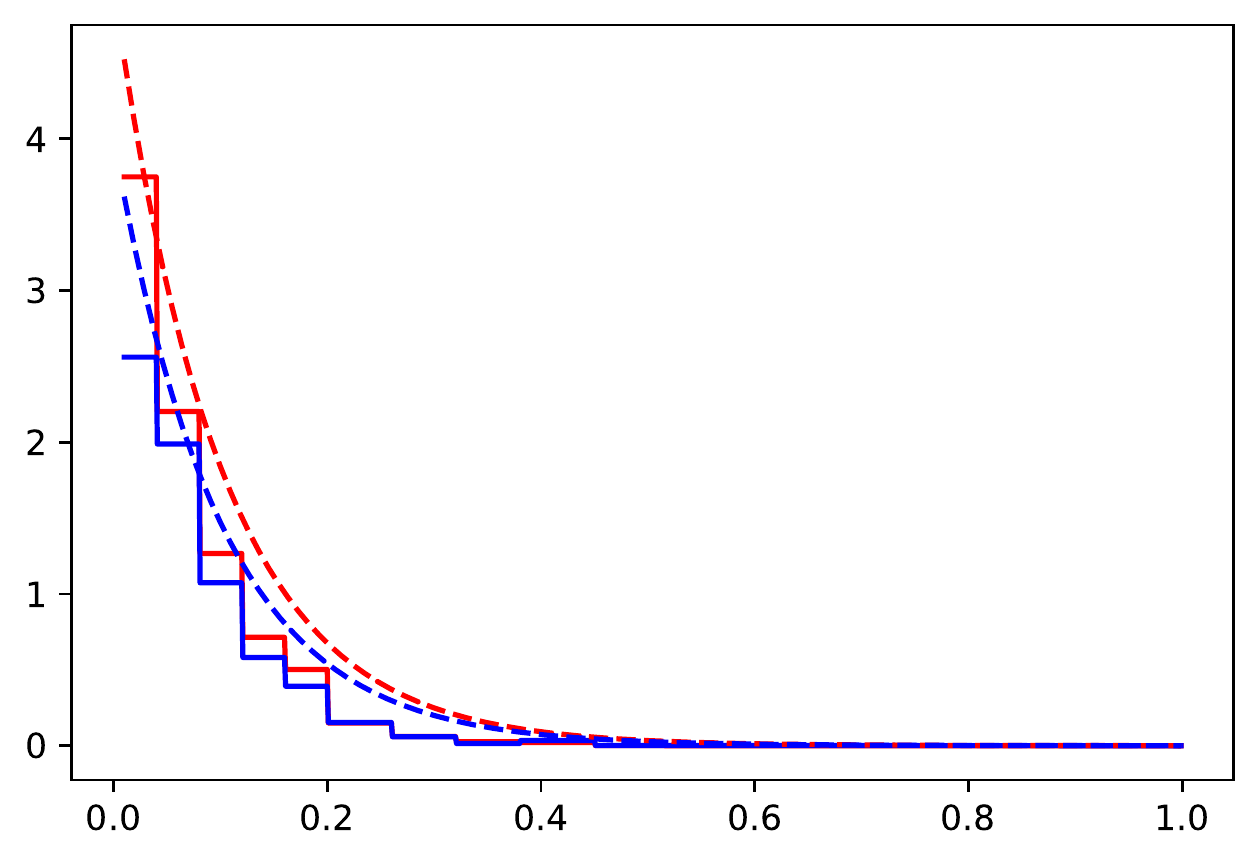}}
\subfigure{\includegraphics[scale=0.2]{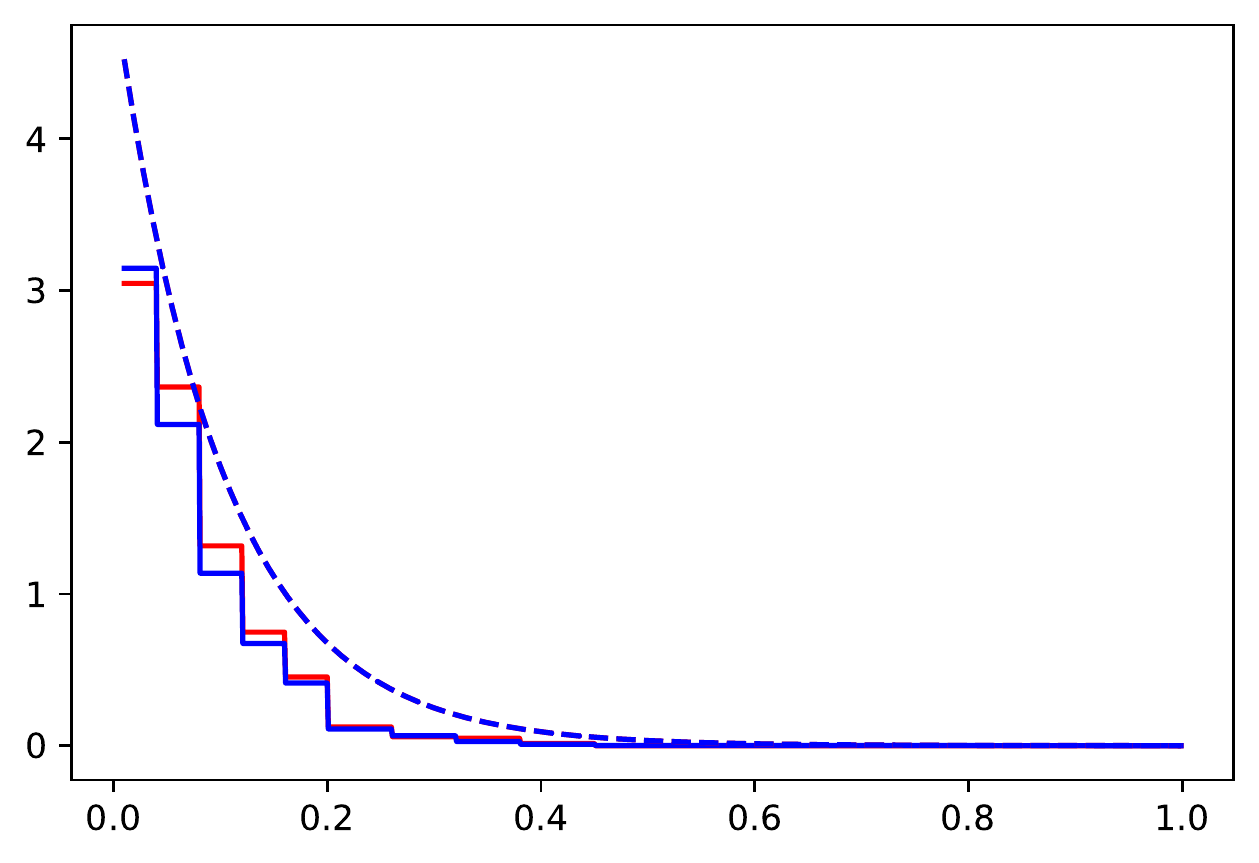}}

    \caption{Histogram estimation under $H_1$: $\mu= 20, \alpha = 2, \beta = 1,2,3,4,5$. The solid line is the true triggering function whereas the dashed line is the estimated one (blue for $D_1$ and red for $D_2$).}
\end{figure}

We can see a very interesting pattern: when the true triggering functions for $D_1$ and $D_2$ are different (off-diagonal panels), histogram estimation tends to yields two piecewise constant triggering function lie between those two different true ones. This means the estimated difference between two triggering functions are much smaller than the truth, or rather $\norm{h(\Tilde{\theta}_{QMLE})}_2$ will be much smaller than it should be. By the form of GW test statistic \eqref{GWstats}, we can see the power of GW test statistic is highly dependent on $\norm{h(\Tilde{\theta}_{QMLE})}_2$ and histogram estimation will make the resulting GW test less powerful.
\section{Some useful functions}\label{specialfunc}
\subsection{Marcum-Q-function}
In statistics, the Marcum-Q-function $Q_{M}$ is defined as
$$Q_{M}(a, b)=\int_{b}^{\infty} x\left(\frac{x}{a}\right)^{M-1} \exp \left(-\frac{x^{2}+a^{2}}{2}\right) I_{M-1}(a x) d x,$$
or
$$Q_{M}(a, b)=\exp \left(-\frac{a^{2}+b^{2}}{2}\right) \sum_{k=1-M}^{\infty}\left(\frac{a}{b}\right)^{k} I_{k}(a b),$$with modified Bessel function $I_{M-1}(\cdot)$ of order $M-1$. \citet{10.2307/2332731} proved the following approximation formula:
$$Q_{k/2}(\sqrt{\lambda}, \sqrt{x}) \approx 1-  \Phi\left\{\frac{\left(\frac{x}{k+\lambda}\right)^{1 / 3}-\left(1-\frac{2}{9 f}\right)}{\sqrt{\frac{2}{9 f}}}\right\},$$ 
where $f=\frac{(k+\lambda)^{2}}{k+2 \lambda}=k+\frac{\lambda^{2}}{k+2 \lambda}$ and $\Phi(\cdot)$ is CDF of standard Gaussian random variable. We can easily verify that $Q_{k/2}(\sqrt{\lambda}, \sqrt{x}) \rightarrow 1$ as $\lambda \rightarrow \infty$. Also, this is illustrated in Figure~\ref{fig_power}. What's more, by the Theorem 1 in \citet{sun2010monotonicity}, Marcum-Q-function $Q_{M}(a,b)$ is monotonically increasing w.r.t. $a$.
\subsection{Matérn covariance function}
The Matérn covariance between two points separated by $d$ distance units is defined as
$$C_{\rho,\nu}(d)=\sigma^{2} \frac{2^{1-\nu}}{\Gamma(\nu)}\left(\sqrt{2 \nu} \frac{d}{\rho}\right)^{\nu} K_{\nu}\left(\sqrt{2 \nu} \frac{d}{\rho}\right).$$
where  $\Gamma(\cdot)$ is the gamma function, $K_{\nu }(\cdot)$ is the modified Bessel function of the second kind, and $\rho$ and $\nu$ are non-negative parameters of the covariance.
\section{Probability weighted histogram estimation under alternative hypothesis}\label{PWHE_full}
Under $H_1$, the triggering mechanism is more complex compared to univariate Hawkes Process, since each event can be either from the background,  direct offspring from an individual ancestor in Hawkes Process $1$ or Hawkes Process $2$ and the triggering effects of events in two different processes are different. 

We denote the branching structure matrix $P_{(z)(z')} \in \RR^{(N_1+N_2)\times(N_1+N_2)}$ ($z, z' \in \{1,2\}$). The element in $i-$th row and $j-$th column is defined to be the probability that event $i$ in process $z$  is triggered by event $j$  in process $z'$ if either $i\not= j$ or $z\not=z'$ (case 1) or the probability that event $i$ in process $ z$ is a background event if $i=j$ and $z=z'$ (case 2). That is,
\begin{equation*} 
    \Big(P_{(z)(z')}\Big)_{i j}=\left\{\begin{array}{ll}
\text {probability that event } i \text { in process } z \text {  is triggered by event } j  \text { in process } z',  &\text {case}\  1 \\
\text {probability that event } i \text { in process } z \text { is a background event, }  &\text {case}\  2
\end{array}\right.
\end{equation*}
Note that the probability is zero when $t_i^{(z)}\leq t_j^{(z')}$, which means the event that happens earlier in the process cannot be triggered by those which happen later.

As discussed above, we focus on differentiating difference in triggering effect. Thus we estimate the sum of two background intensities from all background events:
\begin{equation}\label{algo2_background}
    \mu^{(v)}=\frac{1}{T } \sum_{z=1}^2 \sum_{i=1}^{N_z} \Big(P_{(z)(z)}^{(v)}\Big)_{i i}.
\end{equation}

For the triggering components, we estimate the magnitude for process $z \ (z=1,2)$ using events from aggregated data triggered by process $z$ and estimate the temporal triggering density function from those events which fall into the corresponding bin. 
Note that we have$$\Big(P_{(z')(z)}^{(v)}\Big)_{i j} = \Big(P_{(z')(z)}^{(v)}\Big)_{i j}\mathbf{1}_{\{t_{i}^{(z')}>t_{j}^{(z)}\}}.$$  This is because the probability will be zero if $t_{i}^{(z')}\leq t_{j}^{(z)}$ as discussed above. Thus, for $z =1,2$ and $ k=1, \ldots, n_{0}$, the estimators can be expressed as
\begin{align}
    \alpha_z^{(v)} &= \frac{\sum_{i=1}^{N_z}  \sum_{j=1}^{i-1} \Big(P_{(z)(z)}^{(v)}\Big)_{i j} + \sum_{i=1}^{N_{z'}}\sum_{j=1}^{N_z} \Big(P_{(z')(z)}^{(v)}\Big)_{i j}}{N_z}, \label{algo2_trigger_mag}\\
g_{z,k}^{(v)}&=\frac{\sum_{i=1}^{N_z}  \sum_{j=1}^{i-1} \Big(P_{(z)(z)}^{(v)}\Big)_{i j}\mathbf{1}_{B_{k}}\left(t_{i}^{(z)}-t_{j}^{(z)}\right) + \sum_{i=1}^{N_{z'}}\sum_{j=1}^{N_z} \Big(P_{(z')(z)}^{(v)}\Big)_{i j}\mathbf{1}_{B_{k}}\left(t_{i}^{(z')}-t_{j}^{(z)}\right)}{\Delta t_{k} \left(\sum_{i=1}^{N_z} \sum_{j=1}^{i-1} \Big(P_{(z)(z)}^{(v)}\Big)_{i j} + \sum_{i=1}^{N_{z'}}\sum_{j=1}^{N_z} \Big(P_{(z')(z)}^{(v)}\Big)_{i j}\right)}.\label{algo2_trigger_time}
\end{align}
And the updates for the branching probabilities are similar, for $z=1,2$, $z\not=z'$ and $i=1,2,\dots,N_z$:
\begin{align}
	&\lambda_{z,i}^{(v)} = \mu^{(v)} + \sum_{j=1}^{i-1} \alpha_z^{(v)} g_z^{(v)}\left(t_{i}^{(z)}-t_{j}^{(z)}\right)  + \sum_{j=1}^{N_{z'}}  \alpha_{z'}^{(v)} g_{z'}^{(v)}\left(t_{i}^{(z)} - t_{j}^{(z')}\right) \mathbf{1}_{\{t_{i}^{(z)}>t_{j}^{(z')}\}} \nonumber
	\\
	& \Big(P_{(z)(z)}^{(v+1)}\Big)_{i i}=\frac{\mu^{(v)}}{\lambda_{z,i}^{(v)}} \label{algo2_background_prob}\\
	&\Big(P_{(z)(z)}^{(v+1)}\Big)_{i j}=\frac{ \alpha_{z}^{(v)} g_z^{(v)}\left(t_{i}^{(z)}-t_{j}^{(z)}\right) }{\lambda_{z,i}^{(v)}} \ \ (\text{for} \ i>j) \label{algo2_trigger_prob} \\
&\Big(P_{(z)(z')}^{(v+1)}\Big)_{i j}=\frac{\alpha_{z'}^{(v)} g_{z'}^{(v)}\left(t_{i}^{(z)} - t_{j}^{(z')}\right)\mathbf{1}_{\{t_{i}^{(z)}>t_{j}^{(z')}\}}}{\lambda_{z,i}^{(v)}\label{algo2_trigger_cross_prob}}  
\end{align}
Here we summarize the algorithm as follows:
\begin{algorithm}[H]
    \caption{Probability Weighted Histogram Estimation of Quasi-log-likelihood under $H_1$}\label{algo2}
    \begin{algorithmic}

	\STATE{\textbf{Initialize}: choose stopping critical value $\epsilon$ (e.g. $10^{-3}$), initialize $P_{(z)(z')}^{(0)}$ and set $\Big(P_{(z)(z')}^{(-1)}\Big)_{ij} = \epsilon + \Big(P_{(z)(z')}^{(0)}\Big)_{ij}$ and iteration index $v=0$.}
	\WHILE{$\max _{t_i^{(z)}> t_j^{(z')}}\left|\Big(P_{(z)(z')}^{(v)}\Big)_{i j}-\Big(P_{(z)(z')}^{(v-1)}\Big)_{i j}\right|<\varepsilon$}
	\STATE{1. Estimate background rate $\mu $ as in \eqref{algo2_background}.}
	\STATE{2. Estimate triggering components $\alpha_z$, $g_z(t)$ as in \eqref{algo2_trigger_mag} and \eqref{algo2_trigger_time}.}
	\STATE{3. Update probabilities $\Big(P_{(z)(z')}^{(v+1)}\Big)_{i j}$'s as in \eqref{algo2_background_prob}, \eqref{algo2_trigger_prob} and \eqref{algo2_trigger_cross_prob}.}
	\STATE{4. $v = v + 1$}
	\ENDWHILE
    \end{algorithmic}
\end{algorithm}

 We follow the derivation of EM-type algorithm in Appendix. \ref{algo1EM} and derive that Probability Weighted Histogram Estimation under the full model is again an EM-type algorithm. Similar to the proof framework above, we first use integral approximation of \citet{schoenberg2013facilitated} to approximate the Quasi-log-likelihood function and then lower bound it using Jensen's inequality:

\begin{equation*} \begin{split}  \Tilde{\ell}(\theta)& \approx
\sum_{z=1}^{2}  \sum_{i=1}^{N_z}\Bigg [ \Big(P_{(z)(z)}^{ }\Big)_{i i} \log \mu  + \sum_{j<i} \Big(P_{(z)(z)}^{ }\Big)_{i j}\left(\log \alpha_{z} + \log \left(\sum_{k=1}^{n_{0}} g_{z,k} \mathbf{1}_{B_{k}}\left(t_{i}^{(z)}-t_{j}^{(z)}\right)\right)\right)\\ 
&+ \sum_{j=1}^{N_{z'}} \Big(P_{(z)(z')}^{ }\Big)_{i j}\Bigg( \log \alpha_{z'} + \log \left(\sum_{k=1}^{n_{0}} g_{z',k}  \mathbf{1}_{B_{k}}\left(t_{i}^{(z)}-t_{j}^{(z')}\right)\right)\Bigg]-T \mu - N_1 \alpha_1 -N_2 \alpha_2\\
&    - \sum_{z=1}^{2} \sum_{i=1}^{N_z} \left(\sum_{i\geq j} \Big(P_{(z)(z)}^{ }\Big)_{i j} \log(\Big(P_{(z)(z)}^{ }\Big)_{i j})+\sum_{j=1}^{N_{z'}} \Big(P_{(z)(z')}^{ }\Big)_{i j} \log(\Big(P_{(z)(z')}^{ }\Big)_{i j})\mathbf{1}_{\{t_{i}^{(z)}>t_{j}^{(z')}\}} \right),
\end{split}\end{equation*}where $z'\not=z$. Note that the term $- N_1 \alpha_1 -N_2 \alpha_2$ comes from integral approximation. Add Lagrange multipliers and we will get the following objective function:

\begin{equation*} \begin{split}  \Tilde{L}(\theta)& =
\sum_{z=1}^{2}  \sum_{i=1}^{N_z} \Bigg [\Big(P_{(z)(z)}^{ }\Big)_{i i} \log \mu + \sum_{j<i} \Big(P_{(z)(z)}^{ }\Big)_{i j}\left(\log \alpha_{z} + \log \left(\sum_{k=1}^{n_{0}} g_{z,k} \mathbf{1}_{B_{k}}\left(t_{i}^{(z)}-t_{j}^{(z)}\right)\right) \right)\\ 
&+ \sum_{j=1}^{N_{z'}} \Big(P_{(z)(z')}^{ }\Big)_{i j}\Bigg( \log \alpha_{z'} + \log \left(\sum_{k=1}^{n_{0}} g_{z',k} \mathbf{1}_{B_{k}}\left(t_{i}^{(z)}-t_{j}^{(z')}\right)\right)\Bigg ]-T \mu - N_1 \alpha_1 -N_2 \alpha_2 \\
&    - \sum_{z=1}^{2} \sum_{i=1}^{N_z} \left(\sum_{i\geq j} \Big(P_{(z)(z)}^{ }\Big)_{i j} \log(\Big(P_{(z)(z)}^{ }\Big)_{i j})+\sum_{j=1}^{N_{z'}} \Big(P_{(z)(z')}^{ }\Big)_{i j} \log(\Big(P_{(z)(z')}^{ }\Big)_{i j})\mathbf{1}_{\{t_{i}^{(z)}>t_{j}^{(z')}\}} \right)\\
&-\sum_{z=1}^{2} \left [c_{z,1}\left(\sum_{k=1}^{n_{0}} g_{z,k} \Delta t_{k}-1\right)-\sum_{i=1}^{N_z} c_{z,3}^{(i)}\left(\sum_{i\geq j} \Big(P_{(z)(z)}^{ }\Big)_{i j}+ \sum_{j=1}^{N_{z'}}\Big(P_{(z)(z')}^{ }\Big)_{i j} \mathbf{1}_{\{t_{i}^{(z)}>t_{j}^{(z')}\}} - 1\right)\right ].
\end{split}\end{equation*}Then, by taking first order derivatives and setting them to zero we can validate Algorithm~\ref{algo2} as an EM-type algorithm.

\end{appendices}
\end{document}